\documentclass{article}
\usepackage[left=2.65cm,right=2.65cm,top=2.7cm,bottom=2.7cm]{geometry}
\usepackage{amsmath}
\usepackage{amsthm}
\usepackage{amscd}
\usepackage{amssymb}
\usepackage{amsfonts}
\usepackage{graphics}
\usepackage[dvipsnames]{xcolor}
\usepackage{cancel}
\usepackage{comment}
\usepackage{enumitem}
\usepackage{appendix}

\usepackage{hyperref}
\usepackage{mathtools}
\mathtoolsset{showonlyrefs}

\usepackage{tikz}
\usetikzlibrary{arrows.meta, positioning, calc}
\usepackage{pgfplots}
\pgfplotsset{compat=1.18}

\hypersetup{
	colorlinks   = true, 
	urlcolor     = blue, 
	linkcolor    = blue, 
	citecolor   = red 
}

\newcommand{\boC}{\mathcal{C}}
\newcommand{\boK}{\mathcal{K}}
\newcommand{\boI}{\mathcal{I}}

\newcommand{\R}{\mathbb{R}}
\newcommand{\boW}{\mathcal{W}}
\newcommand{\boN}{\mathcal{N}}
\newcommand{\boF}{\mathcal{F}}
\newcommand{\boE}{\mathcal{E}}
\newcommand{\boH}{\mathcal{H}}
\newcommand{\boA}{\mathcal{A}}
\newcommand{\boG}{\mathcal{G}}

\newcommand{\dq}{\dot{q}}
\newcommand{\dpp}{\dot{p}}

\newcommand{\Hilb}{{\textnormal H}}
\newcommand{\Ebanach}{{\textnormal E}}

\newcommand{\minq}{{\bf q}}
\newcommand{\ball}{B_\boK}
\newcommand{\X}{{\textnormal X}}
\newcommand{\PS}{(\textnormal{PS})^{*}_\tau}

\DeclareMathOperator{\dv}{\hbox{div}}
\newcommand{\taumax}{T}

\newtheorem{theorem}{Theorem}[section]
\newtheorem{lemma}{Lemma}[section]
\newtheorem{proposition}{Proposition}[section]
\newtheorem{corollary}{Corollary}[section]
\theoremstyle{definition}
\newtheorem{remark}{Remark}[section]
\newtheorem{example}{Example}[section]
\newtheorem{definition}{Definition}[section]

\title{A Local Linking Theorem for
Relativistic Action Functionals}
\author{ Manuel Garz\'on\footnote{Instituto de Matemáticas IMUS, Universidad de Sevilla, 41012 Sevilla, Spain. E-mail: {\tt mgarzon2@us.es}}, Salvador L\'opez-Mart\'inez\footnote{Departamento de Matem\'aticas, Universidad Aut\'onoma de Madrid, Ciudad Universitaria de Cantoblanco, 28049, Madrid, Spain. 
		E-mail: {\tt salvador.lopez@uam.es}}}
\date{}

\begin{document}
	
	\maketitle

	\begin{abstract}
We establish an analogue of the Brezis--Nirenberg local linking theorem for a class of Szulkin-type functionals arising from relativistic action principles. In this framework, compactness of Palais--Smale sequences is formulated with respect to a topology induced by the effective domain of the functional, replacing the classical strong Palais--Smale condition. The proof combines the original construction of the min-max geometry, based on a negative gradient flow, with the Ekeland--Lasry regularization. The main difficulty is that the regularized functional is naturally associated with the strong topology of the underlying functional space, whereas compactness for the original functional is formulated in the topology induced by the effective domain. We overcome this obstacle through a new perturbative construction that recovers the required min-max structure. We apply our abstract multiplicity result to two representative relativistic models: the Lorentz force equation, describing the dynamics of a charged particle in an electromagnetic field, and the Dirichlet problem for the prescribed mean curvature operator in Minkowski space. As a consequence, under natural assumptions, each problem admits at least two non-constant solutions.
\end{abstract}
\noindent{\em Keywords:}
	Local linking; non-smooth variational methods; negative gradient flow; multiple nonzero critical points; relativistic action functional; Lorentz force equation; Minkowski curvature.

\noindent{\emph{MSC 2020}:}
34C25; 
35Q60; 
53A10; 
58E05; 
78M30; 
83A05. 
\section{Introduction}
The classical result \cite[Theorem 4]{Bre-Nir}  by Brezis and Nirenberg asserts the following: \textsl{Let $X$ be a Banach space split as $X=X_1\oplus X_2$, with dim$X_1<\infty$. Let $\boI\in\mathcal C^1(\textnormal{X};\R)$ be bounded from below, with $\inf_{\textnormal{X}}\boI<0$, and satisfying the strong Palais--Smale condition. Assume that
\begin{equation}
\label{hyp:SplittingBrezis}
\left\{\begin{array}{ll}
\boI(q)\leq0,&\hbox{for all }q\in X_1\ \hbox{with }\|q\|\in[0,r],\\
\boI(q)\geq 0,&\hbox{for all }q\in X_2\ \hbox{with }\|q\|\in[0,r],
\end{array}\right.
\end{equation}
for some $r>0$. Then $\boI$ has at least two nonzero critical points.}

In this paper, we establish an analogue of this theorem for a subclass of Szulkin-type functionals, which arise naturally in variational formulations of relativistic action principles. Formally, a prototypical relativistic action functional is 
\begin{equation}\label{eq:realtivistic_functional}
\boI(q)=\int_\Omega\left(1-\sqrt{1-|\nabla q|^2} + F(x,q,\nabla q)\right) dx,\qquad q\in\Hilb,
\end{equation}
where $\Omega\subset\R^N$ is a fixed bounded domain, $F$ is a given function and $\Hilb$ denotes a Sobolev space of functions defined in $\Omega$. In contrast with \cite[Theorem 4]{Bre-Nir}, functionals of the form \eqref{eq:realtivistic_functional} are non-smooth and, in general, do not satisfy a strong Palais--Smale  condition. Instead, compactness is formulated with respect to a topology $\tau$ naturally induced by the effective domain $\boK\subseteq\Hilb$ (see \cite{ABT,GLM}).  We apply our abstract result to the Lorentz force equation with vanishing scalar potential, under physically relevant assumptions. We also analyze the Dirichlet problem for the prescribed mean curvature operator in Minkowski space. In both situations, we obtain the existence of at least two non-constant solutions. More details of these applications are given below. 

The local linking condition \eqref{hyp:SplittingBrezis} was first introduced in \cite{Li-Liu, Li-Liu2} in a slightly more restrictive setting than in \cite{Bre-Nir}. Since then, a large body of work has been devoted to the analysis and applications of this condition in the context of critical point theory for $\mathcal C^1$ functionals. We refer the reader to \cite{FPV,Li-Willem,MBR,Perera} for some representative references. Based on Chang's variational theory \cite{Chang}, there exist extensions of \cite[Theorem 4]{Bre-Nir} to the non-smooth framework of locally Lipschitz functionals on reflexive Banach spaces. More precisely, \cite[Theorem 2.3]{Wu} assumes strong convergence of the corresponding Palais--Smale sequences, while \cite[Theorem 8]{KKP} replaces the Palais--Smale condition by the non-smooth Cerami condition, a weaker compactness assumption. Despite their relevance, none of these result applies to the relativistic problems motivating the present work, as they rely on compactness conditions in the strong topology. Indeed, functionals like \eqref{eq:realtivistic_functional} fall outside the locally Lipschitz framework of \cite{Wu,KKP}.

To the best of our knowledge, \cite[Theorem 3.1]{LMM} is the only existing extension of \cite[Theorem 4]{Bre-Nir} within the framework of Szulkin-type functionals. Such a result relies on a positivity assumption near the boundary of $\boK$ (see $f_6$ in \cite{LMM}), as well as on the strong Palais--Smale condition. Even assuming a compactness condition in $\tau$, a positivity assumption near the boundary of $\boK$ is not compatible with our setting (see Appendix~\ref{Appendix-Boundary}) Therefore a new approach is required for our purposes.

The main result of the paper is Theorem~\ref{MainTheorem}, which extends \cite[Theorem 4]{Bre-Nir} to the framework described above. Our assumptions retain the existence of a negative global minimum and a local linking geometry, while replacing the strong Palais--Smale condition by a suitable compactness condition in $\tau$, namely $\PS$ (see Definition~\ref{def:Funct-PS}). The proof combines ideas from the classical argument of Brezis and Nirenberg, via a negative gradient flow, with the regularization procedure introduced by Ekeland and Lasry in \cite{Eke-Las}, which requires the Hilbert space setting. A central difficulty is that the regularized functionals naturally lead to a strong topology, whereas compactness is only available in $\tau$. Our method develops a new construction which exploits simultaneously the regularity properties of the regularized functional and the $\tau$-compactness carried by $\boI$. This yields a suitable min-max geometry which provides the existence of a second nonzero critical point through the variational principle \cite[Theorem 1]{ABT} by Arcoya, Bereanu and Torres. Section~\ref{Sect:Scheme} outlines the proof and highlights its main difficulties.

The first application of Theorem~\ref{MainTheorem} concerns the multiplicity of nontrivial periodic solutions of the Lorentz force equation, which models the dynamics of a charged particle in an electromagnetic field. Critical point methods for such dynamics were recently initiated in \cite{ABT}, where the Poincaré action functional \cite{Po2} is naturally defined on the space of periodic Lipschitz functions (with fixed period). Since then, a number of contributions have been devoted to the existence and multiplicity of periodic solutions of the Lorentz force equation in different regimes. The case of continuous electromagnetic fields is studied in \cite{ABT2,Ber2,Ber-Pir}, whereas \cite{AS,Ber1,BosDamPap,GLM,GT} deal with models allowing the presence of singularities in the field.

The electromagnetic field is usually described in terms of scalar and vector potentials. This formulation is particularly convenient from a mathematical viewpoint due to the gauge invariance of Maxwell's equations, see \cite[Section 3]{GM} for details. In this context, models with vanishing scalar potential are fully natural, as they describe fields generated by configurations of electric currents. Using non-variational techniques, different aspects of the dynamics have been studied in both Newtonian and relativistic settings, see, for instance, \cite{ALP,GT2,GP1,GP2}. From a variational perspective, a novel approach was introduced in \cite{GLM}, by considering fields mainly described by the vector potential rather than by the scalar potential. More precisely, \cite[Theorem 2.1]{GLM} establishes the existence of a global minimizer for a broad class of vector potentials $\boA\subset\mathcal{C}^1(\R^4;\R^3)$ with vanishing scalar potential. For every $A\in\boA$, the minimizer is a non-constant periodic solution of the Lorentz force equation
\begin{equation}\label{eq:LFE-intro}
\dfrac{d}{dt}\left(\frac{\dq(t)}{\sqrt{1-|\dq(t)|^2}}\right)=E(t,q(t)) + \dq(t)\times B(t,q(t)),\quad\hbox{with}\quad E=-\partial_t A,\ B=\nabla\times A.
\end{equation}
Here $q(t)$ denotes the position of the particle, $\dq(t)$ its velocity, and $(E,B)$ the electromagnetic field. Prior to that work, there appeared to be a technical obstruction to the variational treatment of \eqref{eq:LFE-intro}.

In Theorem~\ref{MainTheorem:LFE} we provide a second non-constant periodic solution (with same period) for a subclass $\boA_0\subset\boA$, characterized by the presence of isolated equilibria of the electromagnetic field, thereby extending \cite[Theorem 2.1]{GLM} in this regime. The proof of Theorem~\ref{MainTheorem:LFE} relies on the application of Theorem~\ref{MainTheorem}. The details are found in Section~\ref{Sect:LFE}. It is worth mentioning that the Poincaré action functional has to be formulated on the Sobolev space of periodic functions $H^1$ (with fixed period), which requires extending the variational framework developed in \cite{ABT,GLM} from the space of Lipschitz functions to $H^1$. Such reformulation has also been employed recently in \cite{Ber-Pir} in order to apply the Ekeland--Lasry regularization \cite{Eke-Las}, but with different purposes. As already observed by Szulkin, this method fits naturally within the framework of \cite{Szu}.

The second application concerns the Dirichlet problem
\begin{equation}\label{eq:PDE-Minkowski}
  \dv\!\left(\frac{\nabla q}{\sqrt{1-|\nabla q|^2}}\right)=f(x,q),
  \quad x\in\Omega,
  \qquad
  q|_{\partial\Omega}=0,
\end{equation}
where $\Omega\subset\mathbb{R}^N$ ($N\geq 1$) is a bounded domain and $f\colon\Omega\times\mathbb{R}\to\mathbb{R}$ is a given function. From a geometric viewpoint, the graphs of the solutions~$q$
to~\eqref{eq:PDE-Minkowski} are hypersurfaces in the Minkowski space $\mathbb{L}^{N+1}=\{(x,t)\in\R^N\times \R\}$, with the flat metric $\sum_{i=1}^N(dx_i)^2-(dt)^2$, whose
boundary coincides with $\partial\Omega\subset\mathbb{R}^N$ and whose mean
curvature at $(x,q(x))$ equals $f(x,q(x))/N$. In this context, the (formally defined) operator $q\mapsto\dv ((1-|\nabla q|^2)^{-1/2}\nabla q)$ is referred to as the \emph{mean curvature operator}. Problem \eqref{eq:PDE-Minkowski} has also a physical interpretation through the Born--Infeld model of Electromagnetism, especially when $\Omega=\R^N$ and the boundary condition is replaced with a vanishing condition at infinity, see \cite{BonDavPom,ByeIkoMalMar1}.

In the seminal paper \cite{BS}, the problem \eqref{eq:PDE-Minkowski} is studied by minimizing the associated action functional with $f\in\boC(\Omega\times\R)$. A key step in the analysis is to prove that the minimizer is indeed a solution of \eqref{eq:PDE-Minkowski}. A similar difficulty arises in the more recent work \cite{ByeIkoMalMar2}, where the low-regularity case $f(x,s)=\rho(x)$, with $\rho$ a finite signed Borel measure, is considered. It is also observed in \cite{BS} that, if $s\mapsto f(x,s)$ is non-decreasing, the action functional is strictly convex, yielding uniqueness of the minimizer. In situations where this convexity property fails, a second solution to \eqref{eq:PDE-Minkowski} is obtained in \cite{BerJebMaw} by exploiting the non-smooth mountain pass theorem \cite[Theorem 3.2]{Szu}. Multiplicity of solutions has also been established by non-variational techniques. Namely, \cite{BerJebTor} considers \eqref{eq:PDE-Minkowski} in the radial case with $N\ge2$ and $f(x,s)=-\lambda\mu(|x|)s^q$, where $\mu$ is continuous and positive, $q>1$, and $\lambda>0$ is a parameter. By means of degree theory, the authors show that at least two solutions exist when $\lambda$ is large enough.

Our contribution regarding the multiplicity of solutions to \eqref{eq:PDE-Minkowski} is Theorem \ref{MainTheorem-Dirichlet}. This result shows that Theorem \ref{MainTheorem} yields the existence of at least two nontrivial solutions for nonlinearities of the form $f(x,s)=a(x)s-g(s)$, with $a$ within a suitable class generating negative Laplacian eigenvalues, and $g\in\boC^1$ satisfying $g(0)=g'(0)=0$. To the best of our knowledge, this is the first application of a local linking theorem to a Dirichlet problem driven by the mean curvature operator in Minkowski space. The assumptions of Theorem \ref{MainTheorem} are verified in Section \ref{Section:Minkowski}.

The paper is organized as follows. Section~\ref{Sect:FunctionalFramework} contains the abstract multiplicity theorem and its proof. To guide the reader, the main ideas of the proof and the principal difficulties are discussed in Section~\ref{Sect:Scheme}. Sections~\ref{Sect:LFE} and~\ref{Section:Minkowski} are devoted to the Lorentz force equation and the prescribed mean curvature equation in Minkowski space, respectively. Appendix~\ref{appendix-lemma} contains a technical result concerning the topology $\tau$ and the effective domain in the prototypical relativistic framework. Appendix~\ref{Appendix-Boundary} shows that the positivity assumption on the boundary $\partial\boK$ of the action functional $\boI$ is not compatible with our setting. For each application, we construct a one-parameter family of functionals for which this assumption fails.

\section{The non-smooth local linking theorem }\label{Sect:FunctionalFramework}
Let $\Hilb$ be a Hilbert space with a direct sum decomposition $\Hilb=\Hilb_1\oplus\Hilb_2$, with $\text{dim } \Hilb_1<\infty$. 
Throughout this paper, we denote the norm and inner product in $\Hilb$ by $\|\cdot\|_\Hilb$ and $\langle\cdot,\cdot\rangle_\Hilb$ respectively. Let also $(\Ebanach,\|\cdot\|_\Ebanach)$ be a Banach space in which $\Hilb$ is continuously embedded, i.e. there exists a constant $c_0>0$ such that  
    \begin{equation}\label{eq:embedding}
\|q\|_{\textnormal E}\leq c_0 \|q\|_\Hilb,\quad\hbox{for all }q\in\Hilb.
\end{equation}
   
The following definition gathers the essential properties of the domains of relativistic operators, highlighting the crucial role of a specific topology in these domains.
\begin{definition}\label{def:domain-K}
A convex, closed in $\Hilb$ set $\boK$ is said to be a $\tau$-domain if there exists a topology $\tau$ on $\boK$ satisfying the following two properties:
\begin{enumerate}
\item[(i)] For all $\eta>0$ and $q\in\boK$, the balls
\begin{equation}\label{def:ball}
\ball(q,\eta):=\{p\in\boK:\ \|p-q\|_{\textnormal E}\leq \eta\}
\end{equation}
are compact with respect to $\tau$.
\item[(ii)] The topology $\tau$ is stronger than the topology induced by the norm in ${\textnormal E}$, i.e.,   \begin{equation}\label{hyp:tau_stronger}
\|q_n-q\|_{\textnormal E}\to 0,\text{ as }n\to\infty,\text{ for every }\{q_n\}\subset\boK,\ q\in\boK\text{ such that }q_n\rightarrow^{\tau}q, 
\end{equation}
where $\rightarrow^{\tau}$ denotes convergence in $\tau$.
\end{enumerate}
\end{definition}
\begin{remark}\label{Remarks:Prototypical-Examples}
The prototypical examples are
\begin{equation}\label{eq:example_spaces}
\Hilb=H^1(\Omega),\quad {\textnormal E}=L^p(\Omega),\quad \boK=\{q\in H^1(\Omega): \|\nabla q\|_{\infty}\leq 1\},
\end{equation}
for some bounded domain  $\Omega\subset\R^N$ with smooth boundary, and for any $p>1$ Sobolev-subcritical. Specifically, one may take
\[p=\frac{2N}{N-2}\text{ if }N\geq 3,\quad p<\infty\text{ if }N=2,\quad\text{and}\quad p=\infty \text{ if }N=1.\]
The spaces in \eqref{eq:example_spaces} could be scalar or vectorial, and could as well be complemented with Dirichlet or periodic boundary conditions, for instance. In this setting, it is well known that \eqref{eq:embedding} holds. Moreover, Lemma~\ref{appendix-lemma:tau-domain} in the Appendix shows that $\boK$ is a $\tau$-domain for the topology induced by the norm
\begin{equation}\label{eq:topology-K}
\|q\|_\boK = \|q\|_{\Ebanach} + \|\nabla q\|_{w^*},
\end{equation}
where $\|\cdot\|_{w^*}$ denotes the norm associated with the weak$^*$ topology $\sigma(L^\infty, L^1)$ (see  \cite[Theorem 3.28]{Bre}).
\end{remark}

We next introduce the functionals under consideration, which form a subclass of the Szulkin-type functionals introduced in \cite{Szu} and capture the main features of relativistic action principles. Specifically, let $\boK\subset \Hilb$ be a fixed $\tau$-domain, and let $\boI: \Hilb\to (-\infty,\infty]$ admit a decomposition of the form $\boI=\Psi+\boF$, satisfying the following assumptions:
	\begin{enumerate}[label=(H\arabic*)]   
		\item $\Psi:\Hilb\to (-\infty,\infty]$ is a proper and convex functional with domain $\boK=\{q\in\Hilb:\Psi(q)<\infty\}$. Moreover, $\Psi|_{\boK}$ is continuous with respect to $\|\cdot\|_\Hilb$, and lower semicontinuous with respect to $\tau$. \label{H1:nonsmooth}
		\item $\boF:\Hilb\to\R$ is of class $\mathcal{C}^1$. Moreover, $\boF|_\boK$ is locally uniformly continuous in ${\|\cdot\|_\textnormal E}$, namely, given $\varepsilon>0$ and $q_0\in\boK$, there exists $\delta:=\delta(q_0,\varepsilon)>0$ such that\begin{equation}\label{hyp:F-dualE}
		|\boF(q)-\boF(p)|<\varepsilon,\quad\hbox{for all }q,p\in B_\boK(q_0,\delta).\end{equation}\label{H2:smooth} 
\end{enumerate}
\begin{remark}
Since $\boI(q)=\infty$ for every $q\in\Hilb\setminus\boK$, the functional is trivially lower semicontinuous in $\|\cdot\|_\Hilb$ on the whole space. Moreover, let us emphasize that condition \eqref{hyp:F-dualE} is not standard in the context of Szulkin's functionals. This property will play a crucial role in the proof of Lemma~\ref{lemma-alpha}.
\end{remark}

We use the following notation for the space of functionals with the above decomposition:
    \begin{equation}
        \X(\boK)=\{\boI=\Psi+\boF:\Hilb\to (-\infty,\infty]\text{ satisfying  \ref{H1:nonsmooth} and \ref{H2:smooth}}\}.    \end{equation}
In the setting of $\X(\boK)$, critical points are defined as follows:
\begin{definition}\label{def:CriticalPoint}
A point $q \in \boK$ is called a critical point of $\boI \in \X(\boK)$ if it satisfies
\begin{equation}\label{CriticalPoint}
    \Psi(p) - \Psi(q) + \boF'(q)[p - q] \geq 0, \quad \text{for all } p \in \boK.
\end{equation}
The set of critical points of $\boI$ will be denoted by
\begin{equation}\label{eq:critical_point_set}
     \Sigma(\boI):=\{q\in\boK:\ q \hbox{ satisfies \eqref{CriticalPoint}}\}.
\end{equation}
\end{definition}

\begin{remark}
Notice that  condition \eqref{CriticalPoint} holds trivially when $p \in \Hilb \setminus \boK$. Therefore, restricting to the case $p \in \boK$ entails no loss of generality.
\end{remark}

We recall that every local minimizer of $\boI$ is a critical point (see \cite[Proposition 1.1]{Szu}). Complementarily, critical points other than minimizers are typically obtained as limits of Palais--Smale sequences. These sequences are also well-defined in the non-smooth functional framework introduced in \cite{Szu}:

\begin{definition}\label{def:PalaisSmale}
A sequence $\{q_n\} \subset \boK$ is called a Palais--Smale sequence for $\boI \in \X(\boK)$ at level $c \in \R$ if
\begin{equation}
    \lim_{n \to \infty} \mathcal{I}(q_n) = c,
\end{equation}
and there exists a sequence $\{\epsilon_n\} \subset (0,\infty)$, with $\epsilon_n \to 0$ as $n\to\infty$, such that
\begin{equation}\label{eq:PalaisSmale}
    \Psi(p) - \Psi(q_n) + \boF'(q_n)[p - q_n] \geq -\epsilon_n\,\|p - q_n\|_{\Hilb}, \quad \text{for all } p \in \boK.
\end{equation}
\end{definition}

A standard assumption in variational theory is the compactness of the Palais--Smale sequences in the strong topology of $\Hilb$. Under this hypothesis, \cite[Proposition 1.4]{Szu} implies that the limit of a sequence satisfying \eqref{eq:PalaisSmale} is a critical point at the corresponding level. However, establishing such a condition is often highly nontrivial, even in smooth settings, and in many situations it cannot be expected to hold. Nevertheless, in certain relativistic variational frameworks, the geometry of the functional naturally gives rise to an alternative notion of compactness, which in our setting corresponds to convergence with respect to the topology $\tau$ (see \cite{ABT,GLM}). Motivated by the application presented in Section~\ref{Sect:LFE} (see also Remark~\ref{Remark:PS-0}), the following definition adapts this notion to our abstract framework:

\begin{definition}\label{def:Funct-PS}
The functional $\boI \in \X(\boK)$ satisfies $\PS$ if every Palais--Smale sequence $\{q_n\} \subset \boK$ at level $c\neq0$ admits a subsequence converging in $\tau$ to a critical point $q \in \boK$ with $\boI(q)=c$.
\end{definition}

We now present the main result of this work, which extends \cite[Theorem 4]{Bre-Nir} to the class $\X(\boK)$.

\begin{theorem}\label{MainTheorem}
Let $\boI \in \X(\boK)$ be bounded from below,  with $\inf_{\Hilb} \boI < 0$, and satisfy $\PS$. Assume that
\begin{equation}\label{hyp:Splitting}
\left\{\begin{array}{ll}
\boI(q)\leq0,&\hbox{for all }q\in\Hilb_1\ \hbox{with }\|q\|_\Hilb\in(0,r],\\
\boI(q)\geq g(\|q\|_\Hilb),&\hbox{for all }q\in\Hilb_2\ \hbox{with }\|q\|_\Hilb\in(0,r],
\end{array}\right.
\end{equation}
for some $r>0$, and some function $g : (0,r] \to (0,\infty)$. If there exists $\mu \geq 0$ such that 
\begin{equation}\label{hyp:convex}
		\hbox{$\boI + \mu\|\cdot\|_\Hilb^2$}\text{ is convex},
\end{equation}
then $\boI$ admits at least two nonzero critical points.
\end{theorem}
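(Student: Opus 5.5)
We argue by contradiction and assume that $\boI$ has only finitely many critical points; in fact we assume that $\Sigma(\boI)=\{0,q_{\min}\}$, where $q_{\min}$ is a global minimizer with $\boI(q_{\min})=\inf_\Hilb\boI<0$. This minimizer exists because $\boI$ is bounded from below: by Ekeland's principle there is a Palais--Smale sequence at the negative level $\inf\boI$, and $\PS$ yields a critical point at that level. Note also that $0\in\Sigma(\boI)$. Indeed, by \eqref{hyp:Splitting} and the convexity hypothesis \eqref{hyp:convex}, $0$ is a critical point with $\boI(0)=0$. The critical inequality at $0$ then follows from convexity of $\boI+\mu\|\cdot\|^2$ together with $\boI\le 0$ on $\Hilb_1$ and $\boI\geq 0$ on $\Hilb_2$ near $0$.

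\emph{Step 1: regularize.} Following Ekeland--Lasry, and using \eqref{hyp:convex}, we pass to the Moreau--Yosida type regularization
\[
\boI_\lambda(q)=\inf_{p\in\Hilb}\Big\{\boI(p)+\tfrac{1}{2\lambda}\|p-q\|_\Hilb^2\Big\},\qquad 0<\lambda<\tfrac{1}{2\mu}.
\]
The inner problem is strictly convex, so it has a unique minimizer $J_\lambda q$. The regularized functional $\boI_\lambda$ is $\mathcal C^{1,1}$ on $\Hilb$, and its critical points correspond exactly to those of $\boI$, with equal values, since $\nabla\boI_\lambda(q)=\lambda^{-1}(q-J_\lambda q)$. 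Moreover, an inequality $\|\nabla\boI_\lambda(q_n)\|\to0$ transfers to a Palais--Smale sequence $J_\lambda q_n\subset\boK$ for $\boI$ in the sense of \eqref{eq:PalaisSmale}. The splitting \eqref{hyp:Splitting} is only approximately inherited. For this reason we use $\boI_\lambda\le\boI\le0$ on small spheres of $\Hilb_1$. On $\Hilb_2$ we aim for a uniform positive bound $\boI_\lambda\ge \tfrac12 \inf_{[\rho,r]}g$ on an annulus, for $\lambda$ small. Establishing this bound is where \eqref{hyp:F-dualE} and the $\tau$-compactness of the balls $\ball$ enter: we argue by contradiction, extract $\tau$-convergent sequences, and use $\tau$-lower semicontinuity of $\Psi$.

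\emph{Step 2: Brezis--Nirenberg geometry.} We run the classical construction on $\boI_\lambda$. Take the negative gradient flow $\eta$ of the locally Lipschitz field $\nabla\boI_\lambda$. Along $\eta$, starting from points of the sphere $S_\rho\cap\Hilb_1$ (where $\boI_\lambda\le 0$) and from points at level $\le 0$ in a neighbourhood of $0$, the flow must descend toward $q_{\min}$. If no second nonzero critical value exists, then by the finiteness assumption and the correspondence of critical points the flow is defined globally. Since $\dim\Hilb_1<\infty$, a degree argument shows that the image of the finite-dimensional disk $\overline B_\rho\cap\Hilb_1$ under the deformation must intersect the sphere in $\Hilb_2$, where $\boI_\lambda>0$. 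This produces a min-max class $\Gamma$ of maps, built from paths connecting $q_{\min}$, $0$ and $-q_{\min}$-type points on the sphere of $\Hilb_1$, whose level
\[
c_\lambda=\inf_{\gamma\in\Gamma}\max\boI_\lambda\circ\gamma
\]
satisfies $c_\lambda\ge\tfrac12\inf g>0$. Alternatively, if the $\Hilb_1$-part gives a negative-level linking, we obtain $c_\lambda<0$ but $c_\lambda\neq\inf\boI$. In either case the level is nonzero and distinct from the levels of $0$ and $q_{\min}$.

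\emph{Step 3: compactness and conclusion.} A deformation argument for $\boI_\lambda$ produces $q_n$ with $\boI_\lambda(q_n)\to c_\lambda$ and $\nabla\boI_\lambda(q_n)\to0$. Then $p_n=J_\lambda q_n\in\boK$ is a Palais--Smale sequence for $\boI$ at a level $c\neq0$, because $\boI(p_n)-\boI_\lambda(q_n)\to0$. By $\PS$, a subsequence $\tau$-converges to a critical point with value $c\notin\{0,\boI(q_{\min})\}$, which contradicts our assumption. This yields a second nonzero critical point, in addition to $q_{\min}$. The step I expect to be the main obstacle is this interplay: the deformation for $\boI_\lambda$ needs the Palais--Smale condition in the strong topology of $\Hilb$, which is not available, while $\PS$ only gives $\tau$-convergence of $J_\lambda q_n$. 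To handle this, I would first use the quantitative deformation lemma, which requires no compactness, applied on the sublevel sets outside a small $\|\cdot\|_\Ebanach$-neighbourhood, of $\ball$-type, of the critical set. Then I would use \eqref{hyp:F-dualE} together with the $\tau$-compactness of $\ball$ to show that $\|\nabla\boI_\lambda\|$ is bounded away from zero there. This is the ``perturbative construction'' needed to recover the min-max structure.
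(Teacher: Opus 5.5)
Your proposal has a genuine gap, and it lies in the geometry rather than where you locate it. The deformation step at a nonzero level is actually harmless. If $c\neq0$ is not a critical value, any sequence with $\boI_\lambda(q_n)\to c$ and $\nabla\boI_\lambda(q_n)\to0$ yields the Palais--Smale sequence $J_\lambda q_n$ for $\boI$ at level $c$, and $\PS$ then produces a critical point at level $c$. Hence $\|\nabla\boI_\lambda\|$ is bounded below on a strip around $c$, and the standard deformation lemma applies without any strong compactness.

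The first real problem is the bound $\boI_\lambda\geq\frac12\inf_{[\rho,r]}g$ on an annulus of $\Hilb_2$. Your min-max on $\boI_\lambda$ needs it: otherwise you only get $c_\lambda\geq0$, and $\PS$ says nothing at level $0$ (cf. Remark~\ref{Remark:PS-0}). That annulus is not compact in $\Hilb$, and the contradiction argument you sketch cannot give a positive bound. Take $q_n$ in the annulus, $\lambda_n\to0$ and $p_n=J_{\lambda_n}q_n$. Then $\|p_n-q_n\|_\Hilb\to0$ and $p_n\to^\tau p$. But $\tau$ controls only $\|\cdot\|_\Ebanach$ and weak$^*$ limits of gradients, not the $\Hilb$-norm, so $p$ may be $0$; rapidly oscillating zero-mean loops in Section~\ref{Sect:LFE} are an example. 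Lower semicontinuity then only gives $\liminf\boI(p_n)\geq\boI(0)=0$. Moreover, $g$ is an arbitrary positive function, so $\inf_{[\rho,r]}g$ may well be $0$. The paper avoids this by running the min-max on $\boI$ itself, via the Szulkin-type principle \cite[Theorem 1]{ABT}. The Brezis--Nirenberg intersection lemma gives, for every admissible $\gamma$, a point with $\gamma(u)\in\Hilb_2$ and $\|\gamma(u)\|_\Hilb=\rho$. Hence $\sup\boI\circ\gamma\geq g(\rho)>0$ for the fixed $\rho$, and the regularization is used only to build the boundary map.

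The second, and central, missing piece is that continuous boundary map on $\boN_2$. Your Step 2 presupposes that the gradient flow of $\boI_\lambda$ from $S_r$ ``descends toward $\minq$''. Without compactness in $\Hilb$, one only knows (via $\PS$ and uniqueness of the negative critical point) that the \emph{levels} tend to $\boI(\minq)$, not that trajectories converge in $\Hilb$. So neither continuity at the terminal time nor continuity in the initial datum is available, and along the trajectory only $\boI_\lambda$ is controlled, not $\boI$. The paper's remedy, which your proposal lacks, has four steps:
\begin{enumerate}
\item Stop the normalized flow at level $\boI(\minq)+\delta$. The stopping time $\lambda_\delta(y)$ is explicit, and the flow depends Lipschitz-continuously on $y$ because $\|\boI'_\varepsilon\|$ is bounded below along the trajectories up to that level.
\item Compose with your $J_\lambda$ (the paper's $\omega$). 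This lands in $\boK$ with $\boI(\omega(x))\leq\boI_\varepsilon(x)<0$.
\item Use $\PS$ and uniqueness of the minimizer to trap $\{\boI\leq\boI(\minq)+\delta\}$ in a small $\Ebanach$-ball around $\minq$ (Lemma~\ref{lemma-LevelSets}).
\item Use the estimate $\boI(tp+(1-t)q)<\boI(p)+\alpha$ for $p\in\ball(q,\eta)$ (Lemma~\ref{lemma-alpha}). This is where \eqref{hyp:F-dualE}, convexity of $\Psi$ and $\tau$-compactness of $\ball$ genuinely enter. It lets you join $\omega(x(\lambda_\delta(y);y))$ to $\minq$ by segments along which $\boI<0$ and $\|\cdot\|_\Ebanach\geq\|\minq\|_\Ebanach-\eta$.
\end{enumerate}
Only once this map is built, with $\boI\circ\gamma^*\leq0$ and $\|\gamma^*\|_\Hilb\geq\rho_\delta$, can one choose $\rho<\rho_\delta$ for the intersection argument. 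The ``negative-level linking'' alternative in your Step 2 is not an argument and does not replace this construction.
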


\begin{remark}\label{rem:I(0)}
The first inequality in \eqref{hyp:Splitting} implies that $B_{r}:=\{y\in\Hilb_1:\, \|y\|_\Hilb\leq r\}\subset\boK$, hence the map $\boI|_{B_{r}}:B_r\to\R$ is continuous. Combining this with \eqref{hyp:Splitting}, we obtain $\boI(0)=0$. 
\end{remark}

The proof of Theorem~\ref{MainTheorem} is given at the end of Section~\ref{sect:min-max} and follows the strategy in \cite[Theorem 4]{Bre-Nir}, adapted here to the setting of $\X(\boK)$. A formal scheme of the proof is presented in Section~\ref{Sect:Scheme},  outlining the main ideas and difficulties. In Section~\ref{Sect:tau-levelsets} we establish suitable control of the level sets of $\boI$ with respect to $\tau$, where $\PS$ plays a crucial role. Section~\ref{Sect:eke-las} applies the Ekeland–Lasry regularization technique from \cite{Eke-Las} to the class $\X(\boK)$. Section~\ref{Sect:Negative-Flow} exploits the key tools provided in the previous sections, combining them with a negative gradient flow argument. This yields a suitable min-max geometry in the setting of $\X(\boK)$. Finally, Section~\ref{sect:min-max} develops a mountain pass argument to complete the proof of Theorem~\ref{MainTheorem}.

\subsection{Formal scheme of the proof}\label{Sect:Scheme}

To illustrate the approach of \cite[Theorem 4]{Bre-Nir}, let $\boI:\Hilb\to \R$ be of class $\boC^1$ with globally Lipschitz derivative $\boI'$, satisfying the classical strong Palais--Smale condition, that is, any sequence $\{q_n\}\subset \Hilb$ such that $\boI(q_n)\to c\in\R$ and $\|\boI'(q_n)\|_\Hilb\to 0$ admits a subsequence that converges strongly in $\Hilb$. Assume in addition that $\boI$ admits a unique global minimizer $\minq\in\Hilb$ such that $\boI(\minq)<0=\boI(0)$. Observe that, in order to establish multiplicity for $\boI$, it suffices to consider the case in which $\minq$ is the unique critical point of $\boI$ at a negative level. In this situation, the proof of \cite[Theorem 4]{Bre-Nir} combines a condition \eqref{hyp:SplittingBrezis} with the existence of a negative global minimum. These ingredients yield a  suitable min-max geometry, from which the Palais--Smale condition yields a critical point distinct from both $0$ and $\minq$.

To obtain the min-max geometry, let us fix a vector $v\in \Hilb_2$, with $\|v\|_\Hilb=r<\|\minq\|_\Hilb$, and define the compact set
\begin{equation}\label{def:Set-N}
\boN:=\{q\in\Hilb:\ q=sv+y,\ \hbox{ with }y\in\Hilb_1,\ s\in[0,1]\ \hbox{and } \|q\|_\Hilb\leq r\},
\end{equation}
whose boundary is
\begin{equation}\label{def:BoundarySet-N}
\partial\boN=\boN_1\cup\boN_2,\quad\text{where }\boN_1=\{y\in\Hilb_1:\|y\|_\Hilb\in[0,r]\},\quad\boN_2=\{q\in\boN:\ \|q\|_\Hilb= r\}.
\end{equation}
The goal is to construct a continuous map $\gamma^*:\partial\boN\to\R$  satisfying the following three properties:
\begin{equation}\label{eq:formal_properties}
     \gamma^*|_{\boN_1}=\text{Identity};
    \qquad  \inf_{q\in\boN_2}\|\gamma^*(q)\|>0;\qquad \sup_{q\in\partial\boN}\boI(\gamma^*(q))= 0.
\end{equation}
In that case, \cite[Lemma~3]{Bre-Nir}, together with conditions \eqref{hyp:SplittingBrezis} and \eqref{eq:formal_properties}, yields
\begin{equation*}
\sup_{q\in\partial\boN}\boI(\gamma^*(q))=0<\inf_{\gamma\in\Gamma}\sup_{q\in\boN}\boI(\gamma(q)),
\end{equation*}
where $
    \Gamma:=\{\gamma:\boN\to\Hilb\ \hbox{ continuous and }\gamma|_{\partial\boN}=\gamma^*\}.$  To clarify, such map $\gamma^*$ is refereed to as $p^*$ in \cite{Bre-Nir}.
    
    Let us focus on constructing $\gamma^*$. Since we are interested in nonzero critical points different from $\minq$, one may assume without loss of generality that
\[\boI'(y)\not=0,\quad\text{for every }y\in\Hilb_1\text{ with }\|y\|_\Hilb=r.\]
This allows us to introduce the negative gradient flow starting at $y\in \Hilb_1$ with $\|y\|_\Hilb=r$, that is, the solution to the Cauchy problem
\begin{equation}
    \frac{dx}{dt}=-\frac{\boI'(x(t))}{\|\boI'(x(t))\|_\Hilb^2},\quad x(0)=y.
\end{equation}
Direct computations show that
\[\boI(x(t;y))=\boI(y)-t,\quad\text{for all }t\in [0,\taumax(y)),\]
where $\taumax(y)\in (0,\infty]$ is the maximal time of existence. Since $\boI$ is bounded from below, the above relation yields the upper bound $T(y)\leq-\boI(\minq)$, for all $y$. In particular, the blow-up of $\frac{dx}{dt}(t;y)$ occurs in finite time, since $\boI'$ vanishes as $t\to\taumax(y)$ and the right-hand side of the ODE becomes singular. More precisely, assuming that there are no critical points at negative levels other than $\minq$ (again without loss of generality), the strong Palais--Smale condition yields the convergence
\begin{equation*}
\|x(t;y)-\minq\|_\Hilb\to 0,\quad\text{as }t\to \taumax(y),
\end{equation*}
which implies the continuity of $x(\cdot;y)$ at $\taumax(y)$. Moreover, since $\boI(y)\leq 0$ by \eqref{hyp:SplittingBrezis}, and the function $t\mapsto\boI(x(t;y))$ is decreasing, it follows that
\[\boI(x(t;y))< 0,\quad\text{for all }t\in (0,\taumax(y)).\]
Therefore, assuming that $y\mapsto\taumax(y)$ and $(t,y)\mapsto x(t;y)$ are both continuous, one may formally set
\[\gamma^*(q)=q,\text{ if }q\in\boN_1,\quad \gamma^*(q)=x(s\taumax(y);y),\text{ if } q=sv+y\in\boN_2,\]
which satisfies \eqref{eq:formal_properties}.

To handle the possible lack of continuity with respect to the initial datum, the idea is to take the time $t(y)$ at which $x$ crosses the level set $\{\boI(q)=\boI(\minq)+\delta\}$, for $\delta>0$ small enough, and then connect $x(t(y);y)$ to the global minimizer $\minq$ by a straight segment. Continuity then follows by standard arguments. The trade-off is that one must verify the third property along these segments, at least for $\delta>0$ small enough. To this end, as noted in \cite[Proposition 2]{Bre-Nir}, the strong Palais--Smale condition combined with the Ekeland variational principle implies that every minimizing sequence of $\boI$ admits a convergent subsequence. Hence, the uniqueness of the minimizer guarantees that the level sets near $\minq$ are contained in an arbitrarily small ball centered at $\minq$. More precisely, for every $\varepsilon>0$, there exists $\delta>0$ such that
\begin{equation}
    \{q\in\Hilb:\ \boI(q)=\boI(\minq)+\delta\}\subset\{q\in\Hilb: \|q-\minq\|_\Hilb<\varepsilon\}.
\end{equation}
Since the straight segments connect points on this level set to the minimizer $\minq$, every segment remains entirely within $\{q\in\Hilb:\|q-\minq\|_\Hilb<\varepsilon\}$, by convexity and the previous inclusion. Finally, as $\boI(\minq)<0$, continuity of $\boI$ ensures that $\boI<0$ throughout the ball for $\varepsilon>0$ sufficiently small, so the third property in \eqref{eq:formal_properties} still holds along the segments.

We now wish to adapt this approach to the class $\X(\boK)$. The difficulties we encounter in building the min-max structure are essentially twofold. First, the action functional $\boI$ is not of class $\boC^1$ in $\Hilb$ and, in turn, the Cauchy problem may not be well-posed for $y\in\Hilb_1$. Even if the flow $x(t;y)$ may be well-defined in the interior of $\boK\cap\Hilb_1$, it may cease to exist before reaching $\minq$, in case it attains the boundary $\partial\boK$. This issue may be circumvented under assumption $(f_6)$ in \cite{LMM}, which imposes a positivity condition on $\boI$ near $\partial\boK$. In particular, this prevents the flow from approaching $\partial\boK$. 
 However, one can readily construct examples of the form \eqref{eq:realtivistic_functional} for which $\boI$ attains negative values near $\partial\boK$ (see Appendix~\ref{Appendix-Boundary}). Hence, this hypothesis cannot be assumed in our setting. We overcome this difficulty by replacing $\boI$ in the Cauchy problem with the regularized functional $\boI_\varepsilon$ in the sense of \cite{Eke-Las}.  Among the advantages of this approach, we emphasize that $\boI_\varepsilon$ is smooth, is bounded above by $\boI$, and, remarkably, has the same critical points as $\boI$. Moreover, there is a close relationship between the Palais--Smale sequences of the two functionals. The price to pay is that the regularization requires a Hilbert space setting, as well as assumption \eqref{hyp:convex}, see Section~\ref{Sect:eke-las} for details. As the negative gradient flow is now associated to $\boI_\varepsilon$, we denote it by $x_\varepsilon$.

The second difficulty arises in connection with the Palais--Smale condition. The regularization naturally imposes the strong topology of $\Hilb$ as the appropriate one for working with Palais--Smale sequences for $\boI_\varepsilon$ (see Proposition~\ref{pro:regularizing} and Remark~\ref{remark:difficulties}). Nevertheless, as noted before, a strong Palais--Smale condition in $\Hilb$ is not expected to hold in general for relativistic functionals. Consequently, continuity in $t$ of the negative gradient flow cannot be established directly as in the classical case. 

The problem is further complicated by the need for continuity in $y$ as well, which requires precise control on the level sets. To illustrate this difficulty, consider the best-case scenario in which $x(t;y)$ does not reach $\partial\boK$ for any $t\in(0,T(y))$, so that no regularization is needed. Arguing as in \cite{Bre-Nir}, the alternative Palais--Smale condition $\PS$ implies that \[x(t;y)\to^\tau \minq,\quad\text{as }t\to T(y).\] When attempting to gain continuity of $\gamma^*$, the standard argument used to verify the third property in \eqref{eq:formal_properties} breaks down, since $\boI$ is only lower semicontinuous in the $\tau$-topology rather than continuous. In particular, the sign of $\boI$ in a $\tau$-neighborhood of $\minq$ may not be preserved. 

The key ingredient in the construction of the map $\gamma^*$ satisfying
\eqref{eq:formal_properties} is that, instead of working directly with $x_\varepsilon$, we consider the perturbation
\[\omega_\varepsilon(x_\varepsilon)
:=
x_\varepsilon-\frac{\varepsilon}{2}\boI'_\varepsilon(x_\varepsilon).
\]
We will show that the map $\omega_\varepsilon(x_\varepsilon)$ is continuous in $\Hilb$ and satisfies
\[
\boI(\omega_\varepsilon(x_\varepsilon))
=
\boI_\varepsilon(x_\varepsilon)
-
\frac{\varepsilon}{4}\|\boI'_\varepsilon(x_\varepsilon)\|_\Hilb^2.
\]
As a consequence, at the time $t_\varepsilon(y)$ at which $x_\varepsilon$ crosses the level set $\{\boI_\varepsilon(q)=\boI(\minq)+\delta\}$, one has
\begin{equation}
\omega_\varepsilon(x_\varepsilon(t_\varepsilon(y);y))
\in
\{q\in\boK:\boI(q)\le \boI(\minq)+\delta\}
\subset
\{q\in\boK:\|q-\minq\|_\Ebanach<\varepsilon\},
\end{equation}
where the last inclusion follows from  $\PS$, see Lemma \ref{lemma-LevelSets}. By virtue of the local continuity of $\boI$ with respect to the $\Ebanach$-norm established in Lemma~\ref{lemma-alpha}, we can verify \eqref{eq:formal_properties}. This argument is carried out in Proposition~\ref{Pro:Gamma} and
constitutes the core of this work.

We conclude by pointing again that the local linking condition \eqref{hyp:Splitting} is stronger than \eqref{hyp:SplittingBrezis}. Through the additional assumption \eqref{hyp:convex}, we require the functional to be strictly positive for every nonzero element in the balls of $\Hilb_2$ with small radius. This  is necessary in our setting, since the Palais–Smale sequences \eqref{eq:PalaisSmale} may be non-compact at level zero in the examples under consideration (see Remark~\ref{Remark:PS-0}). Moreover, it allows us to conclude in the applications from Section~\ref{Sect:LFE} and Section~\ref{Section:Minkowski} that the critical points of the functional correspond to non-constant solutions.

\subsection{Properties of \texorpdfstring{$\X(\boK)$}{}}\label{Sect:tau-levelsets} 

In this section we establish three key properties of the functional framework $\X(\boK)$. We begin showing a uniform local continuity with respect to $\tau$ for $\boI\in\X(\boK)$. The uniformity is achieved through the convexity and lower semicontinuity of $\Psi$, together with condition \eqref{hyp:F-dualE}. This continuity property will provide the required control of the level sets of $\boI$ near its minimum, which is essential in the proof of Theorem~\ref{MainTheorem}.

\begin{lemma}\label{lemma-alpha}
	Let $\boI\in\X(\boK)$. Then, for any $q\in\boK$ and any $\alpha>0$, there exists $\eta>0$ such that
	\begin{equation}\label{ineq:lemma1}
		\boI(tp+(1-t) q)<\boI(p)+\alpha,\quad\hbox{for all }(t,p)\in [0,1]\times\ball(q,\eta),
	\end{equation} 
	where the ball $\ball$ is defined in \eqref{def:ball}.
\end{lemma}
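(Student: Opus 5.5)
We split $\boI=\Psi+\boF$ and treat the two parts separately. The nonsmooth part $\Psi$ is handled by convexity, and the smooth part $\boF$ by the local uniform continuity \eqref{hyp:F-dualE}. Fix $q\in\boK$ and $\alpha>0$. For $p\in\ball(q,\eta)$ and $t\in[0,1]$, the point $p_t:=tp+(1-t)q$ lies in $\boK$ by convexity. It also lies in $\ball(q,\eta)$, since $\|p_t-q\|_\Ebanach=t\|p-q\|_\Ebanach\le\eta$.

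For $\Psi$, convexity gives
\[
\Psi(p_t)\le t\Psi(p)+(1-t)\Psi(q)=\Psi(p)+(1-t)\bigl(\Psi(q)-\Psi(p)\bigr).
\]
It therefore suffices to show that $\Psi(q)-\Psi(p)<\alpha/2$ for all $p\in\ball(q,\eta)$ once $\eta$ is small. This is a lower semicontinuity statement at $q$, made uniform over the $\Ebanach$-ball, and it is the main obstacle, since $\Psi$ is only $\tau$-lower semicontinuous. The plan is to argue by contradiction. Suppose there are $\eta_n\to0$ and $p_n\in\ball(q,\eta_n)$ with $\Psi(p_n)\le\Psi(q)-\alpha/2$.

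All $p_n$ lie in $\ball(q,\eta_1)$, which is $\tau$-compact by Definition~\ref{def:domain-K}(i). Hence some subnet $p_{n_k}$ converges in $\tau$ to a point $\bar p\in\boK$. If $\tau$ is not sequential we work with nets; the properties used still hold. By \eqref{hyp:tau_stronger}, $\tau$-convergence implies $\Ebanach$-convergence, so $\|p_{n_k}-\bar p\|_\Ebanach\to0$. Since also $\|p_{n_k}-q\|_\Ebanach\to0$, we get $\bar p=q$. The $\tau$-lower semicontinuity of $\Psi$ from \ref{H1:nonsmooth} then gives
\[
\Psi(q)\le\liminf_k\Psi(p_{n_k})\le\Psi(q)-\alpha/2,
\]
which is a contradiction. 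This yields some $\eta_1>0$ with $\Psi(p_t)<\Psi(p)+\alpha/2$ for all $(t,p)\in[0,1]\times\ball(q,\eta_1)$.

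For $\boF$, apply \eqref{hyp:F-dualE} with $q_0=q$ and $\varepsilon=\alpha/2$ to get $\delta>0$. Since both $p_t$ and $p$ lie in $\ball(q,\delta)$ when $\eta\le\delta$, we have $|\boF(p_t)-\boF(p)|<\alpha/2$. Taking $\eta=\min\{\eta_1,\delta\}$ and adding the two estimates gives $\boI(p_t)<\boI(p)+\alpha$, which is \eqref{ineq:lemma1}.
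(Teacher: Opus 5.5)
Your proof is correct and follows the paper's argument essentially step by step. Convexity of $\Psi$ reduces the claim to the uniform estimate $\Psi(q)<\Psi(p)+\alpha/2$ on a small $\Ebanach$-ball. That estimate is proved by contradiction, using $\tau$-compactness of $\ball(q,\eta)$, identifying the limit through \eqref{hyp:tau_stronger}, and then invoking $\tau$-lower semicontinuity. The $\boF$ term is controlled by \eqref{hyp:F-dualE}, using that $tp+(1-t)q\in\ball(q,\eta)$.

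Your use of subnets instead of the paper's subsequences is a harmless refinement. It implicitly reads \eqref{hyp:tau_stronger} and the $\tau$-semicontinuity of $\Psi$ as topological rather than sequential statements, which is consistent with the phrase ``$\tau$ is stronger''.
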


\begin{proof}
Given $q\in \boK$ and $\alpha>0$, we first claim that
\begin{equation}\label{Claim-proof}
		\Psi(q)<\Psi(p)+\dfrac{\alpha}{2},\quad\hbox{for all }p\in\ball(q,\eta_1),
	\end{equation}
for some $\eta_1>0$. Additionally, by \eqref{hyp:F-dualE} there exists $\eta_2>0$ such that
\begin{equation}\label{ineq:Fmin}
		\boF(tp+(1-t)q)\leq \boF(p)+\frac{\alpha}{2}, \quad\hbox{for all }p\in\ball(q,\eta_2).
	\end{equation}
Taking $\eta\in (0,\min\{\eta_1,\eta_2\})$, using the convexity of $\Psi$ together with the previous inequalities, we obtain
	\begin{align}
		\boI(tp+(1-t)q)&\leq \Psi(p)+(1-t)(\Psi(q)-\Psi(p)) + \boF(p)+\frac{\alpha}{2}<\boI(p)+\alpha,\quad\hbox{for all }p\in\ball(q,\eta).
	\end{align}
Therefore the lemma holds if the claim is true. Assume by contradiction that it is false. Then there exist $q\in\boK$, $\alpha>0$, and a sequence $\{q_n\}\subset\boK$, with $ q_n\in B_\boK(q,n^{-1})$, such that
\begin{equation}
    \Psi(q)\geq\Psi(q_n)+\dfrac{\alpha}{2},\quad\hbox{for all }n\in\mathbb{N}.
\end{equation}
By \eqref{def:ball}, there exists $q_0\in\boK$ such that $q_n\xrightarrow{\tau} q_0$, up to a subsequence. In particular, $q_0=q$ by \eqref{hyp:tau_stronger}. Passing to the limit and using the lower semicontinuity of $\Psi$ in $\tau$, we obtain the next contradiction:
\begin{equation}
    \Psi(q)\leq\liminf_{n\to\infty} \Psi(q_n)\leq\Psi(q)-\dfrac{\alpha}{2}<\Psi(q).
\end{equation}
Therefore, the claim is satisfied.
\end{proof}
The next result guarantees the existence of a global minimum for functionals in $\X(\boK)$ bounded from below under condition $\PS$. It can be seen as a version of \cite[Proposition 2]{Bre-Nir} adapted to our setting.

\begin{lemma}\label{lemma:abstract-minimizer}
	Let $\boI\in\X(\boK)$ be bounded from below, with $\inf_\Hilb\boI<0$, and satisfy $\PS$. Then, every minimizing sequence admits a subsequence that converges in $\Ebanach$ to a global minimizer of $\boI$.
\end{lemma}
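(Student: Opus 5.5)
Let $m:=\inf_\Hilb\boI\in(-\infty,0)$ and let $\{q_n\}\subset\boK$ be a minimizing sequence, so $\boI(q_n)\to m$. The plan is to replace $\{q_n\}$ by a nearby Palais--Smale sequence at level $m$, produced by Ekeland's variational principle, and then use $\PS$. This works because $m\neq 0$, so $\PS$ applies. Afterwards we must transfer the convergence back to the original sequence in the $\Ebanach$-norm.

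\textbf{Step 1 (Ekeland).} Set $\epsilon_n:=\boI(q_n)-m\ge0$; discarding trivial terms, we may assume $\epsilon_n>0$. By \ref{H1:nonsmooth}, \ref{H2:smooth} and the remark following them, $\boI$ is lower semicontinuous on the complete metric space $(\Hilb,\|\cdot\|_\Hilb)$, proper, and bounded below. Ekeland's principle with parameter $\sqrt{\epsilon_n}$ then gives $p_n\in\boK$ such that
\begin{equation*}
\boI(p_n)\le\boI(q_n),\qquad \|p_n-q_n\|_\Hilb\le\sqrt{\epsilon_n},\qquad \boI(p)\ge\boI(p_n)-\sqrt{\epsilon_n}\,\|p-p_n\|_\Hilb\ \ \hbox{for all }p\in\Hilb.
\end{equation*}

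\textbf{Step 2 (PS property).} We show that $\{p_n\}$ satisfies \eqref{eq:PalaisSmale} with $\sqrt{\epsilon_n}$ in place of $\epsilon_n$. This is the standard argument of \cite[Proposition 1.1]{Szu}. Fix $p\in\boK$ and $s\in(0,1)$, and test the Ekeland inequality with $p_n+s(p-p_n)$. Convexity of $\Psi$ gives $\Psi(p_n+s(p-p_n))\le\Psi(p_n)+s(\Psi(p)-\Psi(p_n))$. Dividing by $s$ and letting $s\to0^+$, using that $\boF\in\mathcal C^1$, we obtain
\begin{equation*}
\Psi(p)-\Psi(p_n)+\boF'(p_n)[p-p_n]\ge-\sqrt{\epsilon_n}\|p-p_n\|_\Hilb.
\end{equation*}
Since also $m\le\boI(p_n)\le\boI(q_n)\to m$, the sequence $\{p_n\}$ is a Palais--Smale sequence at level $m\neq0$. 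By $\PS$, a subsequence satisfies $p_n\to^\tau\minq$, where $\minq\in\Sigma(\boI)$ and $\boI(\minq)=m$. Hence $\minq$ is a global minimizer. By \eqref{hyp:tau_stronger}, $\|p_n-\minq\|_\Ebanach\to0$.

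\textbf{Step 3 (back to $q_n$).} By \eqref{eq:embedding},
\begin{equation*}
\|q_n-\minq\|_\Ebanach\le c_0\sqrt{\epsilon_n}+\|p_n-\minq\|_\Ebanach\to0
\end{equation*}
along the subsequence, which proves the lemma.

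I expect no serious obstacle. The one point needing care is Step 2: the one-sided directional derivative argument must be run only with convex combinations inside $\boK$, since $\Psi$ is $+\infty$ outside $\boK$. Because $\boK$ is convex, all the test points $p_n+s(p-p_n)$ lie in $\boK$, so the argument goes through.
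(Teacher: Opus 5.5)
Your proof is correct and follows the same route as the paper: Ekeland's principle yields a Palais--Smale sequence $\{p_n\}$ at level $\inf_\Hilb\boI\neq0$ with $\|p_n-q_n\|_\Hilb\to0$, then $\PS$ together with \eqref{hyp:tau_stronger} gives $\Ebanach$-convergence of a subsequence of $\{p_n\}$ to a global minimizer, and \eqref{eq:embedding} transfers this convergence to $\{q_n\}$. The only cosmetic point is that ``discarding'' the terms with $\boI(q_n)=m$ does not work if there are infinitely many of them; simply apply Ekeland with $\epsilon_n+1/n$ in place of $\epsilon_n$.
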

\begin{proof}
Let $\{q_n\}\subset\boK$ be minimizing sequence. By Ekeland's variational principle, it is straightforward to show that  there exists a Palais--Smale sequence $\{p_n\}\subset\boK$ at level $\inf_\Hilb\boI$ (and thus a minimizing sequence),  satisfying, up to a subsequence, $\|q_n-p_n\|_\Hilb\to 0$ as $n\to\infty$ (see, e.g., \cite[Proposition 1.6 and Theorem~1.7]{Szu}).  Consequently, $\PS$ implies that there exists $q\in\boK$ such that $p_n\to^\tau q$ as $n\to\infty$ and $q$ is a critical point of $\boI$ at level $\inf_\Hilb\boI$. Recall that \eqref{hyp:tau_stronger} also implies that $\|p_n-q\|_\Ebanach\to 0$ as $n\to\infty$. Therefore, the lower semicontinuity of $\Psi$ with respect to $\tau$ and the continuity of $\boF$ with respect to $\Ebanach$ imply that $\boI(q)=\inf_\boH\boI$, so that $q$ is a global minimizer. Finally, \eqref{eq:embedding} implies that $\|q_n-q\|_\Ebanach\to 0$ as $n\to\infty$.
\end{proof}

\begin{remark}
    Observe that, in the proof of Lemma~\ref{lemma:abstract-minimizer}, the fact that the embedding property \eqref{eq:embedding} holds in the entire Hilbert space $\Hilb$ is essential. We point that, in some non-smooth setting, an embedding property is only required to hold in the domain of the functional, see \cite[Theorem 2.1]{Bos}.
\end{remark}

Using \cite[Proposition 2]{Bre-Nir} and assuming the uniqueness of the global minimizer, the authors in \cite{Bre-Nir} show that the sublevel sets at sufficiently small levels are contained in a ball (in $\|\cdot\|_\Hilb$) centered at the minimizer. The following result provides the natural analogue of this property in our setting, where the strong topology of $\Hilb$ is replaced by that of $\Ebanach$.

\begin{lemma}\label{lemma-LevelSets}
	Let $\boI\in\X(\boK)$ satisfy $\PS$ and have a unique global minimizer $\minq\in\Hilb$. Then, for all $\eta>0$, there exists $\delta>0$ such that
	\begin{equation}\label{eq:InclusionLevelsets}
		\{q\in\boK:\ \boI(q)\leq\boI(\minq)+\delta\}\subset B_\boK(\minq,\eta),
	\end{equation}
    where $\ball$ is defined in \eqref{def:ball}.
\end{lemma}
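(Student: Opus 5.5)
I will argue by contradiction, using Lemma~\ref{lemma:abstract-minimizer} as the compactness tool. Suppose the inclusion \eqref{eq:InclusionLevelsets} fails for some $\eta>0$ and every $\delta>0$. Taking $\delta=1/n$, I obtain a sequence $\{q_n\}\subset\boK$ such that
\[
\boI(q_n)\leq \boI(\minq)+\tfrac1n
\qquad\text{and}\qquad
\|q_n-\minq\|_\Ebanach>\eta
\quad\text{for all } n\in\mathbb{N}.
\]
Since $\boI(\minq)=\inf_\Hilb\boI$, we have $\boI(q_n)\to\inf_\Hilb\boI$, so $\{q_n\}$ is a minimizing sequence for $\boI$.

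The next step is to invoke Lemma~\ref{lemma:abstract-minimizer}. That lemma applies once $\boI$ is bounded from below with $\inf_\Hilb\boI<0$, and these hypotheses hold in the context where Lemma~\ref{lemma-LevelSets} is used (the setting of Theorem~\ref{MainTheorem}). The existence of the global minimizer $\minq$ already gives boundedness from below. The sign condition matters because $\PS$ only gives compactness at levels $c\neq 0$, and the Palais--Smale sequence produced via Ekeland's principle lies at level $\inf_\Hilb\boI=\boI(\minq)$. So I need $\boI(\minq)\neq 0$. I will state this explicitly: either assume $\boI(\minq)<0$ as in the main theorem, or note that the argument of Lemma~\ref{lemma:abstract-minimizer} only requires the minimal level to be nonzero.

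Lemma~\ref{lemma:abstract-minimizer} then gives a subsequence $q_{n_k}$ and a global minimizer $q$ with $\|q_{n_k}-q\|_\Ebanach\to 0$. Since the global minimizer is unique, $q=\minq$. Hence $\|q_{n_k}-\minq\|_\Ebanach\to0$, which contradicts $\|q_{n_k}-\minq\|_\Ebanach>\eta$ for all $k$. This proves the claimed inclusion for some $\delta>0$.

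The only delicate point I foresee is the compatibility of the minimal level with $\PS$, which is restricted to $c\neq0$. The rest is a direct contradiction argument. It also uses that the conclusion of Lemma~\ref{lemma:abstract-minimizer} holds for the original sequence $\{q_n\}$ and not only for an auxiliary Palais--Smale sequence. This is guaranteed there by $\|q_n-p_n\|_\Hilb\to0$ together with the embedding \eqref{eq:embedding}.
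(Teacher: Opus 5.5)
Your proof is correct and is essentially the paper's argument: you negate the inclusion to get a minimizing sequence that stays $\eta$-away from $\minq$ in $\Ebanach$, apply Lemma~\ref{lemma:abstract-minimizer}, and use uniqueness of the minimizer to reach a contradiction. Your remark that this needs $\boI(\minq)\neq 0$ (because $\PS$ only covers nonzero levels) is accurate: the paper relies on it implicitly through Lemma~\ref{lemma:abstract-minimizer}, whose hypothesis $\inf_\Hilb\boI<0$ is not restated in this lemma but does hold wherever the lemma is applied.
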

\begin{proof}
Assume that \eqref{eq:InclusionLevelsets} is false. Then there exist $\eta>0$ and a sequence $\{q_n\}\subset\boK$ such that
	\begin{equation}\label{eq:Claim-False}
		\|q_n-\minq\|_{\textnormal E}\geq\eta,\quad\hbox{and}\quad	\boI(q_n)\leq \boI(\minq)+\dfrac{1}{n},\quad\hbox{for all }n\in\mathbb{N}.
	\end{equation}
	In particular, $\{q_n\}$ is a minimizing sequence. Therefore, since the minimizer is unique, Lemma~\ref{lemma:abstract-minimizer} implies that
    $\|q_n-\minq\|_\Ebanach\to 0$ as $n\to\infty$. This is a contradiction.
\end{proof}

\subsection{The Ekeland--Lasry regularization}\label{Sect:eke-las}

For any $\boI\in\X(\boK)$ and $\varepsilon>0$, let us define the functional $\boI_\varepsilon:\Hilb\to\R$ as follows:
\begin{equation}\label{def:regularizedFunctional}
	\boI_\varepsilon(q)=\inf_{p\in\Hilb}\left\{\frac1\varepsilon\|p-q\|^2_{\Hilb} + \boI(p)\right\},\ \hbox{ for all }q\in\Hilb.
\end{equation}
We recall here the celebrated regularizing lemma for non-convex functionals due to Ekeland and Lasry \cite[Lemma 7]{Eke-Las}, which applies directly to any $\boI\in\X(\boK)$ satisfying \eqref{hyp:convex}. Adapted to this framework, it can be partially stated as follows:
\begin{proposition}\label{pro:regularizing} Let $\boI\in\X(\boK)$ satisfy \eqref{hyp:convex}. Then there exists $\varepsilon_0>0$ such that \eqref{def:regularizedFunctional} is finite-valued and of class $\boC^1(\Hilb;\R)$,  for all $\varepsilon\in(0,\varepsilon_0)$. Moreover, $\boI'_\varepsilon:\Hilb\to\Hilb^*$ is globally Lipschitz, and the following properties hold:
	\begin{enumerate}
		\item[(i)]$ \boI(q)\geq \boI_\varepsilon(q) \geq \inf_{p\in\Hilb} \boI(p)$, for all $q\in\Hilb$. 
		\item[(ii)] The critical points of $\boI$ and $\boI_\varepsilon$ are characterized by
		\begin{equation}\label{def:CriticalPoint-Iepsilon}
			\Sigma(\boI)=\Sigma(\boI_\varepsilon)=\{q\in\Hilb:\ \boI(q)=\boI_\varepsilon(q)\}.
		\end{equation}
	\end{enumerate}
\end{proposition}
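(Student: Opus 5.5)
The plan is to reduce everything to convex analysis by writing $\boI=G-\mu\|\cdot\|_\Hilb^2$, where $G:=\boI+\mu\|\cdot\|_\Hilb^2$ is convex by \eqref{hyp:convex} and proper. I take $\varepsilon_0:=1/\mu$, with $\varepsilon_0=\infty$ if $\mu=0$.

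\emph{Lower semicontinuity of $G$.} This holds because $\boK$ is closed, $\boI=\infty$ off $\boK$, and $\boI|_\boK$ is $\|\cdot\|_\Hilb$-continuous by \ref{H1:nonsmooth} and \ref{H2:smooth}.

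\emph{Expanding the square.} For fixed $q$ and $\varepsilon\in(0,\varepsilon_0)$ we get
\[
\frac1\varepsilon\|p-q\|_\Hilb^2+\boI(p)=h_\varepsilon(p)-\frac2\varepsilon\langle p,q\rangle_\Hilb+\frac1\varepsilon\|q\|_\Hilb^2,
\qquad h_\varepsilon:=G+\Big(\frac1\varepsilon-\mu\Big)\|\cdot\|_\Hilb^2 .
\]
The function $h_\varepsilon$ is proper, lower semicontinuous and strongly convex with modulus $\kappa_\varepsilon:=\frac1\varepsilon-\mu>0$. Hence the map $p\mapsto \frac1\varepsilon\|p-q\|^2_\Hilb+\boI(p)$ is coercive, strongly convex and lower semicontinuous on the Hilbert space $\Hilb$. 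It therefore attains its infimum at a unique point $p_\varepsilon(q)$, which shows in particular that $\boI_\varepsilon$ is finite-valued.

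\emph{Regularity.} The same computation gives
\[
\boI_\varepsilon(q)=\frac1\varepsilon\|q\|_\Hilb^2-h_\varepsilon^*\Big(\frac2\varepsilon q\Big).
\]
By the standard duality between strong convexity and smoothness, $h_\varepsilon^*$ is $\boC^1$ with $\nabla h_\varepsilon^*(z)=\arg\max_p\{\langle p,z\rangle-h_\varepsilon(p)\}$, and this gradient is Lipschitz with constant $1/(2\kappa_\varepsilon)$. Consequently $\boI_\varepsilon\in\boC^1$ with
\[
\boI_\varepsilon'(q)=\frac2\varepsilon\big(q-p_\varepsilon(q)\big),
\]
and $\boI_\varepsilon'$ is globally Lipschitz. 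This is the step I expect to need the most care. I would check the Lipschitz bound directly from the strong monotonicity of $\partial h_\varepsilon$ at the two optimality conditions $\frac2\varepsilon q_i\in\partial h_\varepsilon(p_\varepsilon(q_i))$, rather than quote it.

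\emph{Property (i).} Choosing $p=q$ in \eqref{def:regularizedFunctional} gives $\boI_\varepsilon\le\boI$. Dropping the nonnegative quadratic term gives $\boI_\varepsilon\ge\inf\boI$.

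\emph{Property (ii), first equality.} From the gradient formula, $q\in\Sigma(\boI_\varepsilon)$ iff $p_\varepsilon(q)=q$. By uniqueness of the minimizer, this holds iff $q$ minimizes $J_q(p):=\frac1\varepsilon\|p-q\|^2_\Hilb+\boI(p)$. Since $J_q(q)=\boI(q)$, this in turn holds iff $\boI(q)=\boI_\varepsilon(q)$.

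\emph{Property (ii), second equality.} It remains to show that $q$ minimizes $J_q$ iff $q\in\Sigma(\boI)$. Note that $J_q$ is convex, $J_q=\Psi+S$ with $S:=\boF+\frac1\varepsilon\|\cdot-q\|^2_\Hilb$ of class $\boC^1$, and $S'(q)=\boF'(q)$.
\begin{itemize}
\item[$(\Rightarrow)$] If $q$ minimizes $J_q$, then for $p\in\boK$ and $t\in(0,1]$ convexity of $\Psi$ gives
\[
0\le \frac{J_q(q+t(p-q))-J_q(q)}{t}\le \Psi(p)-\Psi(q)+\frac{S(q+t(p-q))-S(q)}{t}.
\]
Letting $t\to0^+$ yields \eqref{CriticalPoint}.
\item[$(\Leftarrow)$] Conversely, let $q\in\Sigma(\boI)$. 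Apply \eqref{CriticalPoint} with $p$ replaced by $q+t(p-q)\in\boK$, divide by $t$ and let $t\to0^+$. This shows that the one-sided directional derivative $J_q'(q;p-q)=\Psi'(q;p-q)+\boF'(q)[p-q]$ is nonnegative. Since $J_q$ is convex, its difference quotients are nondecreasing in $t$, so $J_q(p)-J_q(q)\ge J_q'(q;p-q)\ge0$. Hence $q$ minimizes $J_q$.
\end{itemize}
This closes the chain of equivalences and gives \eqref{def:CriticalPoint-Iepsilon}.
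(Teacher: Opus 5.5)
Your proof is correct. The paper gives no proof of this proposition: it imports \cite[Lemma 7]{Eke-Las} ``adapted to this framework''. Your argument is a self-contained reconstruction of that lemma along its classical lines, representing $\boI_\varepsilon(q)=\frac1\varepsilon\|q\|_\Hilb^2-h_\varepsilon^*(\tfrac2\varepsilon q)$ with $h_\varepsilon$ strongly convex. Your minimizer $p_\varepsilon(q)$ is exactly the map $\omega(q)$ of \eqref{def:omega}.

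Your version gains two things. First, it gives explicit constants: $\varepsilon_0=1/\mu$ and a Lipschitz bound for $\boI_\varepsilon'$. Second, it supplies the one step that ``adapted to this framework'' actually hides. That step is the equivalence between two conditions:
\begin{itemize}
\item minimality of $q$ for $J_q$, which is the same as $2\mu q\in\partial(\boI+\mu\|\cdot\|_\Hilb^2)(q)$, the natural stationarity notion for a semiconvex functional;
\item the Szulkin-type inequality \eqref{CriticalPoint}.
\end{itemize}
Your one-sided directional-derivative argument proves this equivalence.

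Moreover, your $(\Rightarrow)$ argument, run at $p_\varepsilon(q)$ for a general $q$, gives \eqref{def:subdifferential2} directly. So your route also recovers the properties of $\omega$ that the paper later borrows from \cite{Eke-Las}. The paper's citation, in exchange, buys brevity.
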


A direct consequence of this result is that any global minimizer $q$ of $\boI$  satisfies
\begin{equation*}
	\min_{\Hilb}\boI=\boI(q)=\boI_\varepsilon(q)=\min_{\Hilb}\boI_\varepsilon,\quad\hbox{for all $\varepsilon\in(0,\varepsilon_0)$}.
\end{equation*}

\begin{remark}\label{remark:Lipschitz}
By the Riesz representation theorem we may identify $\Hilb^*$ with $\Hilb$, under which Proposition~\ref{pro:regularizing} implies that the map $\boI'_\varepsilon:\Hilb\to\Hilb$ is globally Lipschitz continuous.
\end{remark}

The following lemma shows that, on compact sets, $\mathcal{I}_\varepsilon$ admits $\varepsilon$-uniform lower bounds in terms of the minimum of $\mathcal{I}$ in the set. In particular, this improves assertion (i) of Proposition~\ref{pro:regularizing} when restricted to compact sets, which will be crucial in the subsequent analysis.

\begin{lemma}\label{lemma:bound-regularized}
Let $\boI\in\X(\boK)$ be bounded from below and satisfy \eqref{hyp:convex}. Let $\boA\subset\boK$ be a compact set in $\Hilb$ such that
\begin{equation}
\min_\boA\boI>\inf_\Hilb\boI.
\end{equation}
Then, for every $\delta\in(0,\min_{\boA}\boI-\inf_{\Hilb}\boI)$, there exists $\varepsilon^*:=\varepsilon^*(\delta)\in(0,\varepsilon_0)$ such that
\begin{equation}\label{eq:bounds_boA}
\boI(q)\geq\boI_\varepsilon(q)\geq\min_\boA\boI-\delta,\ \hbox{ for all }q\in\boA,\hbox{ and all }\varepsilon\in(0,\varepsilon^*),
\end{equation}
where $\varepsilon_0>0$ is given by Proposition~\ref{pro:regularizing}.
\end{lemma}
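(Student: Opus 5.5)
The plan is to argue by contradiction, using only the definition \eqref{def:regularizedFunctional}, the lower bound on $\boI$, the compactness of $\boA$ in $\Hilb$, and the continuity of $\boI|_\boK$ in $\|\cdot\|_\Hilb$.

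First, a preliminary remark. By \ref{H1:nonsmooth} and \ref{H2:smooth}, $\boI|_\boK=\Psi|_\boK+\boF|_\boK$ is continuous with respect to $\|\cdot\|_\Hilb$. Since $\boA\subset\boK$ is compact in $\Hilb$, the minimum $\min_\boA\boI$ is attained, so the statement makes sense. The inequality $\boI(q)\ge\boI_\varepsilon(q)$ is Proposition~\ref{pro:regularizing}(i), so only the lower bound in \eqref{eq:bounds_boA} needs proof.

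Suppose the lower bound fails for some $\delta\in(0,\min_\boA\boI-\inf_\Hilb\boI)$. Then there are sequences $\varepsilon_n\to0$ (with $\varepsilon_n<\varepsilon_0$) and $q_n\in\boA$ such that $\boI_{\varepsilon_n}(q_n)<\min_\boA\boI-\delta$. By the definition of the infimum in \eqref{def:regularizedFunctional}, we can pick $p_n\in\Hilb$ with
\[
\frac1{\varepsilon_n}\|p_n-q_n\|_\Hilb^2+\boI(p_n)<\min_\boA\boI-\delta .
\]
This has two consequences. First, $\boI(p_n)<\infty$, so $p_n\in\boK$. Second, since $\boI(p_n)\ge\inf_\Hilb\boI>-\infty$, we get
\[
\|p_n-q_n\|_\Hilb^2\le\varepsilon_n\bigl(\min_\boA\boI-\delta-\inf_\Hilb\boI\bigr)\to0 .
\]
We also have the bound $\boI(p_n)<\min_\boA\boI-\delta$.

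By compactness of $\boA$ in $\Hilb$, up to a subsequence $q_n\to q\in\boA$ in $\Hilb$, and therefore also $p_n\to q$ in $\Hilb$. Since $p_n\in\boK$ and $q\in\boA\subset\boK$, the $\|\cdot\|_\Hilb$-continuity of $\Psi|_\boK$ and of $\boF$ gives $\boI(p_n)\to\boI(q)$. Passing to the limit in $\boI(p_n)<\min_\boA\boI-\delta$ yields $\boI(q)\le\min_\boA\boI-\delta<\min_\boA\boI$, which contradicts $q\in\boA$. Hence there is $\varepsilon^*(\delta)\in(0,\varepsilon_0)$ for which \eqref{eq:bounds_boA} holds.

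The only delicate step is passing to the limit in $\boI(p_n)$. Here it matters that the minimizing points $p_n$ lie in $\boK$ and converge strongly in $\Hilb$, so that the $\Hilb$-continuity of $\Psi$ on $\boK$ from \ref{H1:nonsmooth} applies. Lower semicontinuity of $\boI$ in $\|\cdot\|_\Hilb$ would already suffice for this step. Note that the $\tau$-topology is not needed at all in this lemma.
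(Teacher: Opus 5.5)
Your proof is correct and rests on the same two ingredients as the paper's: the penalty term forces the near-minimizers $p$ to lie within $O(\sqrt{\varepsilon})$ of $q$, and compactness of $\boA$ together with lower semicontinuity of $\boI$ in $\|\cdot\|_\Hilb$ rules out values below $\min_\boA\boI-\delta$ near $\boA$. The paper splits this into a neighborhood claim, proved by contradiction, followed by the explicit choice $\varepsilon^*=\varsigma/(\min_\boA\boI-\inf_\Hilb\boI-\delta)$, whereas you merge both steps into a single contradiction argument.
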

\begin{proof}
Let us first show that $\inf_{\boA}\boI$ is achieved. To this end, take $\{q_n\}\subset \boA$ with $\boI(q_n)\to \inf_{\boA}\boI$ as $n\to\infty$. Since $\boA$ is compact in $\Hilb$, there exists $q\in\boA$ such that $q_n\to q$ as $n\to\infty$, up to a subsequence. Using that $\boI$ is lower semicontinuous with respect to $\Hilb$, we derive
\begin{equation}
\boI(q)\leq\liminf_{n\to\infty}\boI(q_n)=\inf_{\boA}\boI.
\end{equation}
Thus, $\boI(q)=\min_{\boA}\boI$ necessarily. 

Now we take $\delta\in(0,\min_\boA\boI-\inf_\Hilb\boI)$ and we claim that, for some $\varsigma:=\varsigma(\delta)>0$, the following holds: 
\begin{equation}\label{Claim-proof2}
	\boI(p)>\min_\boA\boI-\delta,\quad\hbox{for all }p\in\boK\hbox{ with }\max_{q\in\boA}\|p-q\|^2_\Hilb<\varsigma.
\end{equation}
Otherwise, there exist sequences $\{q_n\}\subset\boA$ and $\{p_n\}\subset\boK$ such that
\begin{equation}
 \|p_n-q_n\|_\Hilb^2<\dfrac{1}{n},\ \hbox{ and }\ \boI(p_n)\leq \min_\boA\boI-\delta,\quad\hbox{for all }n\in\mathbb{N}.
\end{equation}
Since $\boA$ is compact, up to a subsequence,  $\|q_n-q_0\|_\Hilb\to 0$ as $n\to\infty$, for some $q_0\in \boA$, and hence $\lim_{n\to\infty}\|p_n-q_0\|_\Hilb=0.$ Then, the lower semicontinuity of $\boI$ with respect to $\|\cdot\|_\Hilb$ implies that
\begin{equation}
	\boI(q_0)\leq\liminf_{n\to\infty}\boI(p_n)\leq\min_\boA\boI-\delta<\min_\boA\boI,
\end{equation}
which is a contradiction, hence \eqref{Claim-proof2} holds.

To conclude, fix both $\varepsilon\in(0,\varepsilon_0)$ and $q\in\boA$, and observe that \eqref{def:regularizedFunctional} recasts as follows:
\begin{equation}
	\boI_\varepsilon(q)=\inf_{p\in\Hilb}\Phi_\varepsilon^q(p),\quad\hbox{where }\quad\Phi_\varepsilon^q(p)=\dfrac{1}{\varepsilon}\|p-q\|_\Hilb^2+ \boI(p),\quad\hbox{for all }(p,q)\in\Hilb\times\Hilb.
\end{equation}
Since
\begin{equation}
	\Phi_\varepsilon^q(p)\geq\dfrac{\varsigma}{\varepsilon}+\inf_\Hilb\boI,\quad\hbox{when }\|p-q\|_\Hilb^2\geq\varsigma,
\end{equation}
it is immediate that
\begin{equation}\label{proof:ineqPhi2}
	\Phi_\varepsilon^q(p)>\min_\boA\boI-\delta\quad\hbox{when }\|p-q\|_\Hilb^2\geq\varsigma,\quad\hbox{and}\quad\varepsilon<\varepsilon^*:=\dfrac{\varsigma}{\min_\boA\boI-\inf_\Hilb\boI-\delta}.
\end{equation}
Moreover, \eqref{Claim-proof2} yields
\begin{equation}\label{proof:ineqPhi1}
	\Phi_\varepsilon^q(p)>\min_\boA\boI-\delta,\quad\hbox{when }\|p-q\|_\Hilb^2<\varsigma.
\end{equation}
Then, the proof is completed by taking the infimum over $p\in\Hilb$ in the last two inequalities.
\end{proof}
\begin{remark}
Recall that \eqref{def:CriticalPoint-Iepsilon} holds for every $\varepsilon\in(0,\varepsilon_0)$. Hence, if $\boA$ contains a critical point $q$ with level $\min_{\boA}\boI$, then $\boI_\varepsilon(q)=\min_{\boA}\boI$ for all $\varepsilon\in(0,\varepsilon_0)$. In that case, one has  $\inf_{\delta>0}\varepsilon^*(\delta)>0$.
\end{remark}
The previous lemma is a fairly generic property of the  regularized functionals. In particular, it provides uniform estimates on compact sets in $\Hilb$ (for all sufficiently small $\varepsilon$) for $\mathcal{I}_\varepsilon$ whenever the functional admits global minimizers. This will be essential in the following sections. Complementarily, by condition (i) in Proposition~\ref{pro:regularizing}, the negative sign hypothesis of $\boI$ in \eqref{hyp:Splitting} is transferred to $\mathcal{I}_\varepsilon$, for all $\varepsilon>0$. Both facts are gathered in the following result.
\begin{corollary}\label{cor-SplittingRegularized}
Let $\boI\in\X(\boK)$ be bounded from below and satisfy \eqref{hyp:convex}. Assume that
\begin{equation}\label{H:negative_levels}
0\geq\boI(q)>\inf_\Hilb\boI,\ \hbox{for all }q\in\Hilb_1\ \hbox{ with }\|q\|_\Hilb\in[0,r],
\end{equation}
for some $r>0$. Fix a closed interval $I\subset(0,r]$, and define $
   A(I):=\{q\in\Hilb_1:\ \|q\|_\Hilb\in I\}$. Then, for every $\sigma\in(0,\min_{A(I)}\boI-\inf_\Hilb\boI)$, there exists $\varepsilon^*:=\varepsilon^*(\sigma)\in(0,\varepsilon_0)$ such that
\begin{equation}\label{ineq:lemma}
	0\geq\boI(q)\geq\boI_\varepsilon(q)\geq \inf_\Hilb\boI+\sigma,\quad\hbox{for all }q\in A(I),\hbox{ and all }\varepsilon\in(0,\varepsilon^*),
\end{equation}
where $\varepsilon_0>0$ is given by Proposition~\ref{pro:regularizing}. 
\end{corollary}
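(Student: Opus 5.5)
The plan is to apply Lemma~\ref{lemma:bound-regularized} with $\boA:=A(I)$ and $\delta$ chosen appropriately, and then combine the result with Proposition~\ref{pro:regularizing}(i) and the hypothesis \eqref{H:negative_levels}.

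First we check the hypotheses of Lemma~\ref{lemma:bound-regularized}. Since $\dim\Hilb_1<\infty$ and $I\subset(0,r]$ is closed and bounded, the set $A(I)$ is closed and bounded in the finite-dimensional space $\Hilb_1$. Hence it is compact in $\Hilb$. By \eqref{H:negative_levels} we have $\boI<\infty$ on $A(I)$, so $A(I)\subset\boK$.

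Next we need the strict inequality $\min_{A(I)}\boI>\inf_\Hilb\boI$. The proof of Lemma~\ref{lemma:bound-regularized} shows that the minimum of $\boI$ over a compact set is attained, by lower semicontinuity. Alternatively, Remark~\ref{rem:I(0)} gives continuity of $\boI$ on the ball $B_r\subset\Hilb_1$. In either case there is $q_*\in A(I)$ with $\boI(q_*)=\min_{A(I)}\boI$, and \eqref{H:negative_levels} gives $\boI(q_*)>\inf_\Hilb\boI$. Consequently the interval $(0,\min_{A(I)}\boI-\inf_\Hilb\boI)$ is nonempty.

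Now fix $\sigma$ in this interval and set
\[
\delta:=\min_{A(I)}\boI-\inf_\Hilb\boI-\sigma .
\]
Then $\delta\in(0,\min_{A(I)}\boI-\inf_\Hilb\boI)$, so Lemma~\ref{lemma:bound-regularized} applies and yields $\varepsilon^*(\delta)\in(0,\varepsilon_0)$. For every $q\in A(I)$ and every $\varepsilon\in(0,\varepsilon^*)$ we get
\[
\boI(q)\ge\boI_\varepsilon(q)\ge\min_{A(I)}\boI-\delta=\inf_\Hilb\boI+\sigma .
\]
We define $\varepsilon^*(\sigma):=\varepsilon^*(\delta)$. The remaining inequality $0\geq\boI(q)$ is exactly the first part of \eqref{H:negative_levels}. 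Together these give \eqref{ineq:lemma}.

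No step here is a real obstacle. The only point needing care is the strict gap $\min_{A(I)}\boI>\inf_\Hilb\boI$, which relies on the minimum over $A(I)$ being attained. This is where compactness of $A(I)$, coming from $\dim\Hilb_1<\infty$, is used.
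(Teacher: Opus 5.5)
Your proposal is correct and follows the paper's proof: the paper deduces the corollary from Lemma~\ref{lemma:bound-regularized} with $\boA=A(I)$, which is compact because $\dim\Hilb_1<\infty$, and with $\delta=\min_{A(I)}\boI-\inf_\Hilb\boI-\sigma$, reading off $0\geq\boI$ from \eqref{H:negative_levels}. Your additional checks, namely $A(I)\subset\boK$ and the strict gap $\min_{A(I)}\boI>\inf_\Hilb\boI$ via attainment of the minimum, are exactly the details the paper leaves implicit.
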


\begin{remark}
Notice that $A(I)$ is a closed and bounded set in the finite-dimensional space $\Hilb_1$, so it is compact in $\Hilb$. Therefore, Corollary~\ref{cor-SplittingRegularized} follows directly from Lemma~\ref{lemma:bound-regularized} by identifying $\boA=A(I)$ and $\delta=-\sigma+\min_\boA\boI-\inf_\Hilb\boI$. The formulation with $\sigma$ instead of $\delta$ is adopted here for convenience according  with the subsequent results. 
\end{remark}

To conclude, we introduce the Lipschitz map $\omega:\Hilb\to\boK$  from \cite[pg. 310]{Eke-Las}, which satisfies
\begin{equation}\label{def:omega}
	\boI(\omega(q))=\boI_\varepsilon(q)-\dfrac{1}{\varepsilon}\|q-\omega(q)\|_\Hilb^2,\quad\hbox{and}\quad   \boI_\varepsilon'(q)=\frac{2}{\varepsilon}(q-\omega(q)),\quad \hbox{for all }q\in\Hilb.
\end{equation}
In addition,
\begin{equation}\label{def:subdifferential1}
	\boI'_\varepsilon(q)-\boF'(\omega(q))\in\partial\Psi(\omega(q)),\quad\hbox{for all }q\in\Hilb,
\end{equation}
where $\partial\Psi$ denotes the sub-differential of the convex functional $\Psi:\Hilb\to(-\infty,\infty]$. Equivalently, \eqref{def:subdifferential1} may be written as follows:
\begin{equation}\label{def:subdifferential2}
	\Psi(p)-\Psi(\omega(q))-\langle \boI'_\varepsilon(q)-\boF'(\omega(q)),p-\omega(q)\rangle_\Hilb\geq0,\quad\hbox{for all }q,p\in\Hilb.
\end{equation}
The next result shows the relation between the classical Palais--Smale sequences of $\mathcal{I}_\varepsilon$ and those of $\mathcal{I}$ from Definition~\ref{def:PalaisSmale}.
\begin{proposition}\label{Pro:EquivalencePalaisSmale}
	Let $\boI\in\X(\boK)$ satisfy \eqref{hyp:convex}. Let $\{q_n\}\subset\Hilb$ be a classical Palais--Smale sequence at level $c\in\R$ for the regularized functional $\boI_\varepsilon$, namely
	\begin{equation}\label{def:PS-Classical}
		\lim_{n\to\infty}\boI_\varepsilon(q_n)=c,\quad\lim_{n\to\infty}\|\boI'_\varepsilon(q_n)\|_\Hilb=0,
	\end{equation}
    where $\varepsilon\in (0,\varepsilon_0)$ and $\varepsilon_0>0$ is given by Proposition~\ref{pro:regularizing}. Then there exists a Palais--Smale sequence $\{p_n\}\subset\boK$ for $\boI$ at the same level with $\lim_{n\to\infty}\|q_n-p_n\|_\Hilb\to 0$.	
\end{proposition}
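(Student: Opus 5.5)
The natural candidate is $p_n:=\omega(q_n)$, where $\omega$ is the Lipschitz map in \eqref{def:omega}; by construction $p_n\in\boK$. We then check three things in turn: closeness to $q_n$, the level, and the Palais--Smale inequality \eqref{eq:PalaisSmale}.

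\emph{Closeness.} By the second identity in \eqref{def:omega},
\[
\|q_n-p_n\|_\Hilb=\tfrac{\varepsilon}{2}\|\boI'_\varepsilon(q_n)\|_\Hilb\to 0,
\]
using \eqref{def:PS-Classical}.

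\emph{Level.} The first identity in \eqref{def:omega} gives
\[
\boI(p_n)=\boI_\varepsilon(q_n)-\tfrac1\varepsilon\|q_n-p_n\|_\Hilb^2
=\boI_\varepsilon(q_n)-\tfrac{\varepsilon}{4}\|\boI'_\varepsilon(q_n)\|_\Hilb^2 .
\]
Hence $\boI(p_n)\to c$.

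\emph{Palais--Smale inequality.} Apply \eqref{def:subdifferential2} with $q=q_n$ and an arbitrary $p\in\boK$. This yields
\[
\Psi(p)-\Psi(p_n)+\boF'(p_n)[p-p_n]\geq \langle \boI'_\varepsilon(q_n),p-p_n\rangle_\Hilb .
\]
By Cauchy--Schwarz the right-hand side is at least $-\|\boI'_\varepsilon(q_n)\|_\Hilb\,\|p-p_n\|_\Hilb$. Setting $\epsilon_n:=\|\boI'_\varepsilon(q_n)\|_\Hilb+1/n>0$, so that $\epsilon_n\to0$, gives \eqref{eq:PalaisSmale}.

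There is no real obstacle here. The only point needing care is that the properties \eqref{def:omega}--\eqref{def:subdifferential2} of $\omega$ are taken from Ekeland--Lasry, with $\omega(q)$ the minimizer in \eqref{def:regularizedFunctional}; this minimizer exists and is unique for $\varepsilon<\varepsilon_0$ by the strict convexity coming from \eqref{hyp:convex}. Given those properties, the argument is purely algebraic.
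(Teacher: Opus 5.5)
Your proposal is correct and matches the paper's proof: you take $p_n=\omega(q_n)$, obtain the level from the first identity in \eqref{def:omega}, and obtain the Palais--Smale inequality from \eqref{def:subdifferential2} together with Cauchy--Schwarz. You also spell out two points the paper leaves implicit, namely the closeness estimate $\|q_n-p_n\|_\Hilb=\tfrac{\varepsilon}{2}\|\boI'_\varepsilon(q_n)\|_\Hilb\to0$ and the $+1/n$ shift that makes $\epsilon_n>0$ as Definition~\ref{def:PalaisSmale} requires.
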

\begin{proof}
Let $\{q_n\}\subset\Hilb$ satisfy \eqref{def:PS-Classical} for some $c\in\R$, and define $\epsilon_n:= \|\boI_\varepsilon'(q_n)\|_\Hilb$. Since $\lim_{n\to\infty}\epsilon_n=0$, by \eqref{def:omega} we obtain $\lim_{n\to\infty}\boI(\omega(q_n))=c.$ Moreover, \eqref{def:subdifferential2} yields
\begin{equation*}\label{Proof-Ps2}
	\Psi(p)-\Psi(\omega(q_n))+\langle \boF'(\omega(q_n)),p-\omega(q_n)\rangle_\Hilb\geq-\epsilon_n\|p-\omega(q_n)\|_\Hilb,\quad\hbox{for all }p\in\Hilb.
\end{equation*}
We conclude the proof by taking $p_n=\omega(q_n)$.
\end{proof}

\begin{remark}\label{remark:difficulties}
By Proposition~\ref{pro:regularizing}, $\boI$ and $\boI_\varepsilon$ have the same critical points. Therefore, a natural strategy to prove Theorem~\ref{MainTheorem} is to adapt the argument in \cite[Theorem 4]{Bre-Nir} to the regularized functional $\boI_\varepsilon$, while imposing the corresponding assumptions on $\boI$. In that case, a strong (in $\Hilb$) Palais--Smale condition for $\boI_\varepsilon$ is required to obtain the min-max geometry detailed in Section~\ref{Sect:Scheme}. Moreover, by Proposition~\ref{Pro:EquivalencePalaisSmale} it follows that the strong convergence in $\Hilb$ for the Palais--Smale sequences of $\boI$ implies such compactness condition for the classical Palais--Smale sequences of $\boI_\varepsilon$. However, this does not hold generally, since only $\tau$-convergence is guaranteed by $\PS$. This subtle issue renders this approach ineffective for proving Theorem~\ref{MainTheorem}. The key idea is therefore to work directly with $\boI$, while still exploiting the regularity properties of $\boI_\varepsilon$ within the argument, see Section~\ref{Sect:Negative-Flow} below.
\end{remark}

\subsection{The negative gradient flow}\label{Sect:Negative-Flow}

In this section we address the technical difficulties described in Remark~\ref{remark:difficulties} and construct a continuous map with the properties required to establish the desired mountain-pass geometry (see Section~\ref{sect:min-max} for the min-max argument). The construction relies crucially on the results of Sections~\ref{Sect:tau-levelsets} and \ref{Sect:eke-las}.

Throughout the section we consider a functional $\boI\in\X(\boK)$ bounded from below with $\inf_\Hilb\boI<0$, satisfying $\PS$ and \eqref{hyp:convex}. Under these hypotheses, the existence of a global minimizer $\minq\in\boK$ is guaranteed by Lemma~\ref{lemma:abstract-minimizer}. We also assume that $\minq\in\boK$ is the only critical point of $\boI$ at a negative level, i.e.
\begin{equation}\label{H:existence_unique_minimizer}
    \Sigma(\boI)\cap \{q\in\Hilb: \boI(q)<0\}=\{\minq\},\quad\boI(\minq)=\min_\Hilb\boI,
\end{equation}
where the set $\Sigma(\boI)$ is introduced in \eqref{eq:critical_point_set}. Moreover, there exists $r\in(0,\|\minq\|_\Hilb)$ satisfying \eqref{H:negative_levels} and
\begin{equation}\label{H:zero_isolated}
  \Sigma(\boI)\cap\{q\in\Hilb_1: \|q\|_\Hilb\in(0, r]\}=\emptyset.
\end{equation}

\begin{remark}
By Lemma~\ref{lemma:abstract-minimizer}, the proof for Theorem~\ref{MainTheorem} is reduced to establish the existence of a second nontrivial critical point for $\boI$. Therefore, both assumptions \eqref{H:existence_unique_minimizer} and \eqref{H:zero_isolated} entail no loss of generality. Observe also that \eqref{H:negative_levels} holds trivially under hypotheses  \eqref{hyp:Splitting} and \eqref{H:existence_unique_minimizer}, hence assuming \eqref{H:negative_levels} imposes no additional restriction in the proof of Theorem~\ref{MainTheorem}. We do not assume the full splitting condition \eqref{hyp:Splitting} at this stage, since the positivity condition on $\Hilb_2$ will only be needed in the next subsection.
\end{remark}

Let us denote the sphere in $\Hilb_1$ of radius $\rho>0$ by
\begin{equation}\label{def:Sphere}
    S_\rho:=\{q\in\Hilb_1:\ \|q\|_\Hilb=\rho\}.
\end{equation}
As Corollary~\ref{cor-SplittingRegularized} applies to $A(\{r\})=S_r$, we may define $\sigma_r:=\min_{S_r}\boI-\boI(\minq)>0$ and fix the constants
\begin{equation}\label{def:Constant}
\sigma\in(0,\sigma_r),\quad\quad\varepsilon\in(0,\varepsilon^*(\sigma)).
\end{equation}
Therefore, condition \eqref{H:zero_isolated}, together with Proposition~\ref{pro:regularizing} and Corollary~\ref{cor-SplittingRegularized}, yields
\begin{equation}\label{cond:non-degeneracy}
	0\geq\boI(y)>\boI_\varepsilon(y)\geq\boI(\minq)+\sigma;\quad\boI'_\varepsilon(y)\neq0,\quad\hbox{for all }y\in S_r,
\end{equation}
where the strict inequality follows from the fact that $S_r$ contains no critical points.

Let us now define the functional ${\boG}:\Omega\to\Hilb$ by
\begin{equation}\label{def:F-PVI}
	{\boG}(q)=-\frac{\boI'_\varepsilon(q)}{\|\boI_\varepsilon'(q)\|_\Hilb^2},\quad\text{for all }q\in \Omega:=\{q\in \Hilb:\, \|\boI'_\varepsilon(q)\|_\Hilb\not=0\}.
\end{equation}
In particular, ${\boG}$ is well defined on $S_r$ by condition \eqref{H:zero_isolated}. We also emphasize that, in \eqref{def:F-PVI}, each $\boI'_\varepsilon(q)$ is identified with an element of $\Hilb$ through the Riesz representation theorem.

The following lemma establishes the existence and regularity properties of the negative gradient flow $x(t;y)$ associated with $\boI'_\varepsilon$, for sufficiently small nonzero initial data $y\in\Hilb_1$. This result is partially analogous to \cite[Lemma 4]{Bre-Nir}. 

\begin{lemma}\label{lemma:PVI}
Let $\boI\in\X(\boK)$  satisfy $\PS$, \eqref{hyp:convex}, \eqref{H:negative_levels}, \eqref{H:existence_unique_minimizer}, and  \eqref{H:zero_isolated}, for some $\minq\in\boK$ and $r\in(0,\|\minq\|_\Hilb)$. Let $\sigma$ and $\varepsilon$ be given by \eqref{def:Constant}. Consider the initial value problem
\begin{equation}\label{eq:ivp}
	\frac{dx}{dt}=\boG(x(t)),\quad x(0)=y,\quad\text{with }y\in S_r,
\end{equation}
with ${\boG}:\Omega\to\Hilb$ given in \eqref{def:F-PVI}. Then \eqref{eq:ivp} admits a unique  solution $x(\cdot;y) \in \mathcal{C}^1([0,\taumax(y)); \Omega)$, where
\begin{equation}\label{eq:taumax-bounds}
\taumax(y)=\boI_\varepsilon(y)-\boI(\minq)\in\left[\sigma,-\boI(\minq)\right],\qquad\hbox{for all }y\in S_r.
\end{equation}
Moreover,
\begin{equation}\label{ineq:regularized-flow}
\boI_\varepsilon(x(t;y))=\boI_\varepsilon(y)-t,\quad \hbox{for all } (t,y)\in[0,\taumax(y))\times S_r,
\end{equation}
and
\begin{equation}
\lim_{t\to\taumax(y)}\boI_\varepsilon(x(t;y))=\boI(\minq),\quad\hbox{for all }y\in S_r.
\end{equation}
\end{lemma}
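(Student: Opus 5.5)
The plan is to treat \eqref{eq:ivp} as an autonomous ODE on the open set $\Omega\subset\Hilb$. We get local well-posedness from the regularity of $\boI_\varepsilon$, obtain the energy identity \eqref{ineq:regularized-flow} by differentiation, and then pin down $\taumax(y)$ with a blow-up alternative that combines $\PS$ with Proposition~\ref{Pro:EquivalencePalaisSmale}.

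First, $\Omega$ is open because $\boI'_\varepsilon$ is continuous; indeed it is globally Lipschitz by Remark~\ref{remark:Lipschitz}. On $\Omega$ the map $q\mapsto \|\boI'_\varepsilon(q)\|_\Hilb^{-2}$ is locally Lipschitz, being the composition of a locally Lipschitz function with $t\mapsto t^{-2}$ away from $0$. Hence $\boG$ is locally Lipschitz on $\Omega$. By \eqref{cond:non-degeneracy}, every $y\in S_r$ lies in $\Omega$. Picard--Lindel\"of in the Banach space $\Hilb$ then gives a unique maximal solution $x(\cdot;y)\in\mathcal C^1([0,\taumax(y));\Omega)$. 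Since $\boI_\varepsilon\in\mathcal C^1$, we have
\[
\frac{d}{dt}\boI_\varepsilon(x(t;y))=\langle \boI'_\varepsilon(x),\boG(x)\rangle_\Hilb=-1,
\]
and integrating gives \eqref{ineq:regularized-flow}. By Proposition~\ref{pro:regularizing}(i), $\boI_\varepsilon\geq\inf_\Hilb\boI=\boI(\minq)$, so $\boI_\varepsilon(y)-t\geq \boI(\minq)$ for all $t<\taumax(y)$. This yields $\taumax(y)\leq \boI_\varepsilon(y)-\boI(\minq)$.

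The main step is the reverse inequality. Suppose by contradiction that $\taumax:=\taumax(y)<\boI_\varepsilon(y)-\boI(\minq)$, and set $c:=\boI_\varepsilon(y)-\taumax$. Then $c>\boI(\minq)$, and $c<\boI_\varepsilon(y)\leq 0$ by \eqref{cond:non-degeneracy}, so $c\in(\boI(\minq),0)$. We distinguish two cases.

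In the first case, $\liminf_{t\to\taumax}\|\boI'_\varepsilon(x(t;y))\|_\Hilb=0$. Then there are $t_n\to\taumax$ such that $q_n:=x(t_n;y)$ is a classical Palais--Smale sequence for $\boI_\varepsilon$ at level $c$. Proposition~\ref{Pro:EquivalencePalaisSmale} produces a Palais--Smale sequence for $\boI$ at level $c\neq0$. By $\PS$ this gives a critical point of $\boI$ at level $c$, which is negative and different from $\boI(\minq)$. This contradicts \eqref{H:existence_unique_minimizer}.

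In the second case, $\|\boI'_\varepsilon(x(t;y))\|_\Hilb\geq m>0$ near $\taumax$. Then $\|\boG(x)\|_\Hilb\leq 1/m$, so $x(\cdot;y)$ is Lipschitz and, since $\taumax<\infty$, it has a limit $x_T\in\Hilb$. Two subcases arise. If $x_T\in\Omega$, the solution extends past $\taumax$, contradicting maximality. If $x_T\notin\Omega$, then $x_T\in\Sigma(\boI_\varepsilon)=\Sigma(\boI)$ with $\boI(x_T)=\boI_\varepsilon(x_T)=c$ by \eqref{def:CriticalPoint-Iepsilon} and continuity, which is the same contradiction as before. (In fact $x_T\notin\Omega$ is already excluded by the lower bound $m$, but the argument covers it anyway.)

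Therefore $\taumax(y)=\boI_\varepsilon(y)-\boI(\minq)$. The bounds in \eqref{eq:taumax-bounds} follow from $\boI(\minq)+\sigma\leq\boI_\varepsilon(y)\leq 0$ in \eqref{cond:non-degeneracy}. The limit $\boI_\varepsilon(x(t;y))\to\boI(\minq)$ as $t\to\taumax(y)$ is then immediate from \eqref{ineq:regularized-flow}. The delicate point is the blow-up alternative. Only $\tau$-compactness is available, so we never use convergence of the flow itself. We only produce a critical point at a forbidden level, via either $\PS$ or the identity $\Sigma(\boI)=\Sigma(\boI_\varepsilon)$.
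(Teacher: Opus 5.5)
Your proof is correct and follows essentially the same route as the paper. Both obtain the energy identity from the chain rule. Both then use the maximality of $\taumax(y)$ to force $\|\boI'_\varepsilon\|_\Hilb$ to degenerate along a sequence, which through Proposition~\ref{Pro:EquivalencePalaisSmale} and $\PS$ produces a critical point at the limit level, so by \eqref{H:existence_unique_minimizer} that level must be $\boI(\minq)$. Your contradiction setup, with the case split on $\liminf_{t\to\taumax(y)}\|\boI'_\varepsilon(x(t;y))\|_\Hilb$ and the separate bound $\taumax(y)\le\boI_\varepsilon(y)-\boI(\minq)$ securing finiteness, is a slightly more careful version of the paper's direct argument: the paper asserts $\|\boI'_\varepsilon(x(t_n;y))\|_\Hilb\to0$ along every sequence $t_n\to\taumax(y)$, when only some sequence is needed.
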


\begin{proof}
By Remark~\ref{remark:Lipschitz}, the operator $\boI'_\varepsilon:\Hilb\to\Hilb$ is globally Lipschitz, thus ${\boG}$ is locally Lipschitz in $\Omega$. Therefore, by standard ODE theory in Banach spaces, for every $y\in S_r$ there exists a unique solution $x=x(\cdot;y)\in\boC^1([0,\taumax(y));\Omega)$ to \eqref{eq:ivp} defined on a maximal interval of existence $[0,\taumax(y))$ (see \cite{Deim} for details). Moreover, the chain rule leads to
\begin{equation}\label{eq:derivative-1}
	\frac{d}{dt}(\boI_\varepsilon(x(t;y))=\left\langle \boI_\varepsilon'(x(t;y)),\frac{dx}{dt}(t;y)\right\rangle_\Hilb=-1,\quad\hbox{for all } (t,y)\in[0,\taumax(y))\times S_r,
\end{equation}
which yields relation \eqref{ineq:regularized-flow}. In particular, from \eqref{cond:non-degeneracy} it follows that the function $\boI_\varepsilon(x(\cdot;y)):[0,\taumax(y))\to\R$ is strictly decreasing and negative, for every $y\in S_r$.

Now we claim that  $\boI_\varepsilon(x(t;y))\to\boI(\minq)$, as $t\to\taumax(y)$, for all $y\in S_r$. In particular, passing to the limit in \eqref{ineq:regularized-flow}, this yields the characterization of $\taumax(y)$ in \eqref{eq:taumax-bounds} which, together with the estimates in \eqref{ineq:lemma}, directly implies the uniform bounds on $y$ for $\taumax$ given in \eqref{eq:taumax-bounds}. Therefore, the proof is completed if the claim is true. 

To prove it, fix $y\in S_r$ and let ${t_n}\subset(0,\taumax(y))$ be an increasing sequence converging to $\taumax(y)$. Since ${\boI_\varepsilon(x(t_n;y))}$ is decreasing and bounded from below, we have that
\[
\boI_\varepsilon(x(t_n;y))\to c\quad\text{as }n\to\infty,\quad\text{for some }c\in[\boI(\minq),0).
\]
Moreover, 
\[
\|\boI_\varepsilon'(x(t_n;y))\|_\Hilb\to 0\quad\text{as }n\to\infty,
\]
necessarily, otherwise the solution $x(\cdot;y)$ could be extended beyond $\taumax(y)$, contradicting its maximality. Therefore $\{x(t_n;y)\}$ is a classical Palais--Smale sequence for $\boI_\varepsilon$ and, by Proposition~\ref{Pro:EquivalencePalaisSmale}, there exists a Palais--Smale sequence $\{p_n\}\subset\boK$ for $\boI$ at level $c$. Since $c\neq0$ and $\PS$ holds, this yields a critical point $q\in\boK$ at level $c$. Necessarily, $q=\minq$ and $c=\boI(\minq)$, since $\minq$ is the only critical point at a negative level. Finally, by (ii) in Proposition~\ref{pro:regularizing}, we conclude that $\boI_\varepsilon(x(t;y))\to \boI_\varepsilon(\minq)=\boI(\minq)$ as $t\to\taumax(y)$. Thus the claim holds and the proof is completed.
\end{proof}

For any $\delta\in(0,\sigma)$, define the continuous function $\lambda_\delta:S_r\to\R$ by
\begin{equation}\label{def:lambda}
\lambda_\delta(y)=\boI_\varepsilon(y)-\boI(\minq)-\delta,
\end{equation}
which, together with \eqref{ineq:regularized-flow}, yields
\begin{equation}\label{eq:charact-lambda-flow}
\boI_\varepsilon(x(\lambda_\delta(y);y))=\boI_\varepsilon(y)-\lambda_\delta(y)=\boI(\minq)+\delta,\quad\hbox{for all }y\in S_r.
\end{equation}
Furthermore, from \eqref{ineq:lemma} we obtain the following estimates:
\begin{equation}\label{eq:lambda-bounds}
	0<\sigma-\delta\leq\lambda_\delta(y)\leq -\boI(\minq)-\delta,\qquad\hbox{for all }y\in S_r.
\end{equation}

\begin{corollary}\label{cor:continuity}
Let $\boI\in\X(\boK)$ satisfy $\PS$, \eqref{hyp:convex}, \eqref{H:negative_levels}, \eqref{H:existence_unique_minimizer}, and \eqref{H:zero_isolated}, for some $\minq\in\boK$ and $r\in(0,\|\minq\|_\Hilb)$. Let $\sigma$ and $\varepsilon$ be given by \eqref{def:Constant}. Fix $\delta\in(0,\sigma)$ and define the sets 
\begin{equation}\label{def:LambdaSet}
    \Lambda^r_\delta:=\{(t,y):\  t\in[0,\lambda_\delta(y)]\hbox{ and } y\in S_r\},\quad\Omega^r_\delta:=\{z(t,y):\ (t,y)\in\Lambda^r_\delta\}.
\end{equation} Then, the map $z:\Lambda^r_\delta\to\Omega^r_\delta$ defined by $z(t,y)=x(t;y)$ is Lipschitz. 
\end{corollary}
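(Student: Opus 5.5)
The plan is to show that the vector field $\boG$ is bounded and Lipschitz on a neighbourhood of $\Omega^r_\delta$ that is uniform in $(t,y)$, and then to apply Gronwall's inequality together with the Lipschitz dependence of $\lambda_\delta$.

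\textbf{Step 1: a uniform lower bound on the gradient.} I first claim that there is $\kappa>0$ with
\[
\|\boI'_\varepsilon(x(t;y))\|_\Hilb\geq\kappa\quad\text{for all }(t,y)\in\Lambda^r_\delta.
\]
On $\Lambda^r_\delta$ we have, by \eqref{ineq:regularized-flow} and \eqref{eq:charact-lambda-flow}, that $\boI_\varepsilon(x(t;y))\in[\boI(\minq)+\delta,0]$. Suppose the claim fails. Then there are points $q_n=x(t_n;y_n)$ with $\|\boI'_\varepsilon(q_n)\|_\Hilb\to0$. Up to a subsequence, $\boI_\varepsilon(q_n)\to c\in[\boI(\minq)+\delta,0]$.

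If $c<0$, Proposition~\ref{Pro:EquivalencePalaisSmale} and $\PS$ produce a critical point at the negative level $c>\boI(\minq)$. This contradicts \eqref{H:existence_unique_minimizer}.

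The case $c=0$ is the delicate one, because $\PS$ gives nothing at level zero. Here I would use the flow itself. Since $\boI_\varepsilon(y_n)-t_n\to0$ and $\boI_\varepsilon(y_n)\leq\boI(y_n)\leq0$, we get $t_n\to0$ and $\boI_\varepsilon(y_n)\to0$. By compactness of $S_r$, pass to $y_n\to y_0$, so $\boI_\varepsilon(y_0)=0$. By \eqref{cond:non-degeneracy}, however, $\boI_\varepsilon<\boI\leq0$ on $S_r$, which is a contradiction.

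Without using $c$ at all, one can instead argue as follows. Since $|\dot x|=\|\boI'_\varepsilon\|_\Hilb^{-1}\geq L^{-1}$ is bounded below, this alone is not enough. A cleaner route is to compare with a fixed time. Let $m:=\min_{S_r}\|\boI'_\varepsilon\|_\Hilb>0$, which is positive by compactness and continuity. As long as $\|\boI'_\varepsilon(x)\|_\Hilb\geq m/2$, we have
\[
\|x(t;y)-y\|_\Hilb\leq \frac{2t}{m},
\]
and the $L$-Lipschitz bound on $\boI'_\varepsilon$ keeps the gradient above $m/2$ for $t\leq m^2/(4L)$. This handles small $t$ uniformly, and the $c<0$ argument handles $t\geq t_0$. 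I expect this uniform nondegeneracy to be the main obstacle.

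\textbf{Step 2: Lipschitz estimates on the vector field.} With the lower bound $\kappa$, $\boG$ satisfies $\|\boG\|_\Hilb\leq\kappa^{-1}$ on $\Omega^r_\delta$. It is also Lipschitz, with constant of order $L\kappa^{-2}$, on the set $\{\|\boI'_\varepsilon\|_\Hilb\geq\kappa/2\}$. This follows from $\left\|a/|a|^2-b/|b|^2\right\|\leq |a-b|/(|a||b|)$.

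\textbf{Step 3: Lipschitz dependence on the initial datum.} Let $L_\boG$ be the Lipschitz constant of $\boG$ from Step 2. Gronwall's inequality then gives
\[
\|x(t;y)-x(t;y')\|_\Hilb\leq e^{L_\boG T}\|y-y'\|_\Hilb
\]
for $t\leq\min\{\lambda_\delta(y),\lambda_\delta(y')\}$, where $T=-\boI(\minq)$. To use it, one must keep both trajectories inside the region $\{\|\boI'_\varepsilon\|_\Hilb\geq\kappa/2\}$. This is done by a continuity argument: for $\|y-y'\|_\Hilb$ small, the trajectories stay within distance $\kappa/(2L)$ of $\Omega^r_\delta$. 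For $\|y-y'\|_\Hilb$ large, the Lipschitz estimate follows trivially from boundedness of $z$, since $\|z\|_\Hilb\leq r+T/\kappa$.

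\textbf{Step 4: assembling the estimate.} For $(t,y),(t',y')\in\Lambda^r_\delta$ with, say, $t'\leq t$, I would split
\[
z(t,y)-z(t',y')=\bigl[z(t,y)-z(s,y)\bigr]+\bigl[z(s,y)-z(s,y')\bigr]+\bigl[z(s,y')-z(t',y')\bigr],
\]
where $s=\min\{t,\lambda_\delta(y')\}$. The time increments are controlled by $\kappa^{-1}$ times a time difference, and the middle term by the Gronwall bound of Step 3. The time differences are bounded by $|t-t'|$ plus $|\lambda_\delta(y)-\lambda_\delta(y')|$. The latter is at most $\sup\|\boI'_\varepsilon\|_\Hilb\,\|y-y'\|_\Hilb$, because $\boI_\varepsilon$ is $\boC^1$ with Lipschitz gradient and hence Lipschitz on bounded sets. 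This yields a global Lipschitz constant for $z$ on $\Lambda^r_\delta$.
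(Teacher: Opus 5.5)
Your proof is correct and follows essentially the same route as the paper: a uniform lower bound on $\|\boI'_\varepsilon\|_\Hilb$ over $\Omega^r_\delta$ (via Proposition~\ref{Pro:EquivalencePalaisSmale}, $\PS$ and uniqueness of the negative critical level), the inversion identity to make $\boG$ Lipschitz there, Gronwall, and a triangle inequality. Several of your extra steps are unnecessary but harmless. Your separate $c=0$ case is just the observation that the limit level is $\boI_\varepsilon(y_0)-t_0<0$ by compactness of $\Lambda^r_\delta$ and \eqref{cond:non-degeneracy}. Both trajectories already lie in $\Omega^r_\delta$ for $s\le\min\{\lambda_\delta(y),\lambda_\delta(y')\}$, so no neighbourhood argument is needed in Step 3. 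Finally, using the intermediate point $z(t',y)$ (admissible since $t'\le t\le\lambda_\delta(y)$) avoids needing Lipschitz continuity of $\lambda_\delta$.
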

\begin{proof}
We claim that
\begin{equation}
m:=\inf_{\Omega^r_\delta}\|\boI'_\varepsilon(\cdot)\|_\Hilb=\inf_{\Lambda_\delta^r}\|\boI'_\varepsilon(z(\cdot))\|_\Hilb>0.
\end{equation} 
Indeed, if $m=0$, there is a sequence $\{(t_n,y_n)\}\subset\Lambda_\delta^r$ such that $\lim_{n\to\infty}\|\boI_\varepsilon'(z(\cdot))\|_\Hilb=0$. Since $\Lambda_\delta^r$ is compact, there exists $(t_0,y_0)\in\Lambda_\delta^r$ such that, up to subsequence, 
\begin{equation}
    |t_n-t_0|+\|y_n-y_0\|_\Hilb\to0,\quad\hbox{ as }n\to\infty.
\end{equation}
Then, since $\boI_\varepsilon$ is continuous, the characterization \eqref{ineq:regularized-flow} of the flow $\boI_\varepsilon(z(\cdot))$ yields
\begin{equation}
    \lim_{n\to\infty}\boI_\varepsilon(z(t_n,y_n))=\lim_{n\to\infty}(\boI_\varepsilon(y_n)-t_n)=\boI_\varepsilon(y_0)-t_0=\boI_\varepsilon(z(t_0,y_0)).
\end{equation}
Moreover, by \eqref{cond:non-degeneracy} and \eqref{eq:charact-lambda-flow} we obtain
\begin{equation}
    0>\boI_\varepsilon(y_0)\geq\boI_\varepsilon(z(t_0,y_0))\geq\boI_\varepsilon(z(\lambda_\delta(y_0),y_0))=\boI(\minq)+\delta.
\end{equation}
In particular, we have obtained a Palais--Smale sequence for $\boI_\varepsilon$ at a level $c\in(\boI(\minq),0)$. By Proposition~\ref{Pro:EquivalencePalaisSmale} and $\PS$,  there exists a critical point of $\boI$ at level $c$, contradicting \eqref{H:existence_unique_minimizer}. Hence the claim $m>0$ holds. As a consequence,
\begin{equation}\label{eq:F-estimate}
\sup_{q\in\Omega^r_\delta}\|{\boG}(q)\|_\Hilb\leq m^{-1}.
\end{equation}
It is straightforward to verify that there exists a constant $L>0$ such that
\begin{equation}\label{eq:Gbounded}
\|\boG(q)-\boG(p)\|_\Hilb\leq L\|q-p\|_\Hilb,\quad\text{for all }q,p\in\Omega_\delta^r.
\end{equation}
Therefore, Gronwall's lemma yields
\begin{equation}\label{eq:gronwall}
    \|z(t,y)-z(t,\hat y)\|_\Hilb\leq e^{L\min\{\lambda_\delta(y),\lambda_\delta(\hat y)\}}\|y-\hat y\|_\Hilb \leq e^{-L(\boI(\minq)+\delta)}\|y-\hat y\|_\Hilb,
\end{equation}
for every $y,\hat y\in S_r$ and $t\in [0,\min\{\lambda_\delta(y),\lambda_\delta(\hat y)\}]$. Thus, let us fix $(t,y),(\hat t,\hat y)\in \Lambda_\delta^r$ and assume without loss of generality that $\hat t\leq t$, so that $\hat t\in [0,\min\{\lambda_\delta(y),\lambda_\delta(\hat y)\}]$. Applying \eqref{eq:Gbounded} and \eqref{eq:gronwall}, we derive
\begin{align*}
    &\|z(t,y)-z(\hat t,\hat y)\|_\Hilb \leq \|z(t,y)-z(\hat t,y)\|_\Hilb + \|z(\hat t,\hat y)-z(\hat t,y)\|_\Hilb
    \\
    &\leq\int_{\hat t}^t\left\|\boG(z(s,y))\right\|ds + e^{-L(\boI(\minq)+\delta)}\|y-\hat y\|_\Hilb \leq m^{-1}|t- \hat t|+e^{-L(\boI(\minq)+\delta)}\|y-\hat y\|_\Hilb.
\end{align*}
This proves that $z$ is Lipschitz.
\end{proof}

    By Corollary~\ref{cor:continuity}, the flow $x(t;y)$ reaches the negative level set $\{q\in\boK:\boI(q)=\boI(\minq)+\delta\}$, with continuous dependence on the initial datum $y$, for every $\delta\in(0,\sigma)$. Nevertheless, this does not resolve the issues described both in Remark~\ref{remark:difficulties} and in Section~\ref{Sect:Scheme}, which constitutes the central difficulty of the paper. We overcome this by exploiting the properties of the Lipschitz map $\omega:\Hilb\to\boK$ defined in \eqref{def:omega}, which may be viewed as a smooth $\varepsilon$-perturbation of the identity. The precise construction is given in the following result.

\begin{proposition}\label{Pro:Gamma}
Let $\boI\in\X(\boK)$ satisfy $\PS$, \eqref{hyp:convex}, \eqref{H:negative_levels}, \eqref{H:existence_unique_minimizer}, \eqref{H:zero_isolated}, for some $\minq\in\boK$ and $r\in(0,\|\minq\|_\Hilb)$. Let $\sigma$ and $\varepsilon$ be given by \eqref{def:Constant}. Then there exists $\delta\in(0,\sigma)$ such that the map $\gamma^*:[0,1]\times S_r\to\boK$, defined by
\begin{equation}\label{def:gamma*}
    \gamma^*(s,y):=\left\{\begin{array}{cl}
         \omega(x(2s\lambda_\delta(y);y)),&\hbox{for }s\in\left[0,\tfrac{1}{2}\right],  \vspace{1mm} \\ 
         (2s-1)\minq+2(1-s)\omega(x(\lambda_\delta(y);y)),&\hbox{for } s\in\left[\tfrac{1}{2},1\right],
    \end{array}\right.
\end{equation}    
is continuous, where $\omega:\Hilb\to\boK$ and $\lambda_\delta:S_r\to\R$ are given by \eqref{def:omega} and \eqref{def:lambda}, respectively. Moreover,
\begin{equation}
\label{eq:gamma-negativeflow}
    \boI(\gamma^*(s,y))\leq0,\quad  \hbox{for all }(s,y)\in[0,1]\times S_r,
\end{equation}and there exists $\rho_\delta\in (0,r)$ satisfying the following:
\begin{equation}\label{eq:gamma-notzero}
    \|\gamma^*(s,y)\|_\Hilb\geq \rho_\delta,\quad \hbox{for all }(s,y)\in[0,1]\times S_r.
\end{equation}
\end{proposition}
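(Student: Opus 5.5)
The plan is to verify three things in turn: continuity of $\gamma^*$, the sign condition \eqref{eq:gamma-negativeflow}, and the lower bound \eqref{eq:gamma-notzero}. The delicate part is choosing $\delta$ so that the straight segments stay in the region where $\boI<0$.

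\emph{Continuity.} By Corollary~\ref{cor:continuity}, $z(t,y)=x(t;y)$ is Lipschitz on $\Lambda^r_\delta$. The map $\lambda_\delta$ is continuous on $S_r$, because $\boI_\varepsilon$ is $\boC^1$. The map $\omega:\Hilb\to\boK$ is Lipschitz. Hence $(s,y)\mapsto\omega(x(2s\lambda_\delta(y);y))$ is continuous on $[0,\tfrac12]\times S_r$. The second branch is an affine combination of $\minq$ and the continuous map $y\mapsto\omega(x(\lambda_\delta(y);y))$, and the two branches agree at $s=\tfrac12$. Convexity of $\boK$ gives $\gamma^*(s,y)\in\boK$.

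\emph{Sign on the first half.} By \eqref{def:omega}, $\boI(\omega(q))\le\boI_\varepsilon(q)$. Combined with \eqref{ineq:regularized-flow} and \eqref{cond:non-degeneracy}, this gives
\[
\boI(\gamma^*(s,y))\le\boI_\varepsilon(y)-2s\lambda_\delta(y)\le\boI_\varepsilon(y)<0 .
\]
At $s=\tfrac12$ we obtain $\boI(\omega(x(\lambda_\delta(y);y)))\le\boI(\minq)+\delta$, by \eqref{eq:charact-lambda-flow}.

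\emph{Sign on the second half (the main step).} Fix $\alpha:=-\boI(\minq)/2>0$. Lemma~\ref{lemma-alpha}, applied at the point $\minq$, gives $\eta>0$ with
\[
\boI(t p+(1-t)\minq)<\boI(p)+\alpha \quad\text{for all } p\in\ball(\minq,\eta),\ t\in[0,1].
\]
Lemma~\ref{lemma-LevelSets} applies since $\minq$ is the unique global minimizer; this uniqueness comes from \eqref{H:existence_unique_minimizer}, because any global minimizer is a critical point at a negative level. It gives $\delta_0>0$ such that the sublevel set $\{\boI\le\boI(\minq)+\delta_0\}$ is contained in $\ball(\minq,\eta)$. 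Choose $\delta<\min\{\sigma,\delta_0,\alpha\}$ and set $p=\omega(x(\lambda_\delta(y);y))$. Then $p\in\ball(\minq,\eta)$, so for $s\in[\tfrac12,1]$,
\[
\boI(\gamma^*(s,y))<\boI(p)+\alpha\le\boI(\minq)+\delta+\alpha<0 .
\]
This is exactly where the $\Ebanach$-local control replaces strong $\Hilb$-compactness, which is why this step is the crux.

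\emph{Lower bound \eqref{eq:gamma-notzero}.} Argue by contradiction. Suppose $\gamma^*(s_n,y_n)\to0$ in $\Hilb$. Since $\boI$ is continuous near $0$ only on $\Hilb_1$, I would not use continuity of $\boI$ there. Instead I would use the uniform strict negativity just shown. On the second half, $\boI(\gamma^*)\le\boI(\minq)/2+\delta<\boI(\minq)/4$ if $\delta$ is additionally small. On the first half, $\boI(\gamma^*)\le\boI_\varepsilon(y)\le\max_{S_r}\boI_\varepsilon=:-\kappa<0$, using compactness of $S_r$ and \eqref{cond:non-degeneracy}. So $\boI\circ\gamma^*\le -c<0$ uniformly.

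Now use that $\boI$ is lower semicontinuous in $\Hilb$ and $\boI(0)=0$ (Remark~\ref{rem:I(0)}). These give $\liminf\boI(\gamma^*(s_n,y_n))\ge0$, which contradicts the uniform bound $-c$. Hence $\inf\|\gamma^*\|_\Hilb>0$, and we take $\rho_\delta$ to be this infimum, reduced below $r$ if necessary. The only subtle point I anticipate is ensuring that the bound is uniform, which compactness of $S_r$ and the explicit estimates above provide.
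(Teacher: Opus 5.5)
Your proof is correct. For continuity and \eqref{eq:gamma-negativeflow} it follows the paper's argument. On the first half you use the identity \eqref{def:omega}. On the second half you use Lemma~\ref{lemma-alpha} at $\minq$ together with the sublevel inclusion of Lemma~\ref{lemma-LevelSets}. The paper chooses $\alpha=-\boI(\minq)-\sigma$, which makes the bound $\delta-\sigma<0$ automatic; your $\alpha=-\boI(\minq)/2$ needs only the harmless extra condition $\delta<\alpha$. You also make explicit the uniqueness of the global minimizer that Lemma~\ref{lemma-LevelSets} requires, which the paper leaves implicit.

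The genuine difference is in \eqref{eq:gamma-notzero}. The paper treats the two halves separately. On $[0,\tfrac12]\times S_r$ it uses compactness of $\Lambda^r_\delta$ and the fact that $\omega\circ z$ never vanishes there. On $[\tfrac12,1]\times S_r$ it uses the triangle inequality in $\Ebanach$ together with \eqref{eq:embedding}. This is why it also imposes $\eta<\|\minq\|_\Ebanach$, and it yields the explicit bound $c_0^{-1}(\|\minq\|_\Ebanach-\eta)$. You handle both halves at once, via a uniform bound $\boI\circ\gamma^*\le -c<0$ and lower semicontinuity of $\boI$ at $0$ in $\Hilb$. This avoids the restriction on $\eta$ but is non-quantitative.

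Even the uniformity is unnecessary. Since $\gamma^*$ is continuous on the compact set $[0,1]\times S_r$ and $\boI\circ\gamma^*<0=\boI(0)$ pointwise, $\|\gamma^*\|_\Hilb$ attains a positive minimum. This is just the paper's first-half argument applied to the whole domain.

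Both your argument and the paper's use $\boI(0)=0$. That identity comes from \eqref{hyp:Splitting} via Remark~\ref{rem:I(0)}, not from \eqref{H:negative_levels}, which only gives $\boI(0)\le 0$. This is harmless, since \eqref{hyp:Splitting} holds wherever the proposition is applied.
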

\begin{proof}
Let $\delta\in (0,\sigma)$ be chosen small enough later. First, recall from Remark~\ref{remark:Lipschitz} and \eqref{def:omega} that $\omega:\Hilb\to\boK$ is a Lipschitz operator. Thus, the fact that $\gamma^*(s,y)\in\boK$ for every $s\in [0,1/2]$ is obvious. In addition, since $\minq\in\boK$ and $\boK$ is convex, it follows that $\gamma^*(s,y)\in\boK$ for every $s\in (1/2,1]$ as well. Moreover, note that we may write 
\begin{equation}
    \gamma^*(s,y)=\omega(z(2s\lambda_\delta(y),y)),\quad\hbox{for all }(s,y)\in\left[0,\tfrac{1}{2}\right]\times S_r,
\end{equation}where $z:\Lambda_\delta^r\to\Omega_\delta^r$ is introduced in Corollary~\ref{cor:continuity}. This result  implies that the map $\gamma^*$ defined is continuous. For convenience, we maintain the notation $z(t,y)$ instead of $x(t;y)$ in the rest of the proof.

Let us fix $\alpha=-\boI(\minq)-\sigma$ (note that $\alpha>-\min_{S_r}\boI\geq0$), and let $\eta_\sigma>0$ be given by Lemma \ref{lemma-alpha}. We fix also $\eta\in(0,\min\{\eta_\sigma,\|\minq\|_\Ebanach\})$.

Now we prove \eqref{eq:gamma-negativeflow}. From the definition of $\omega$ in \eqref{def:omega}, and by \eqref{ineq:regularized-flow}, we write
\begin{align*}
    \boI(\omega(z(t,y)))&=\boI_\varepsilon(z(t,y))-\dfrac{1}{\varepsilon}\|z(t,y)-\omega(z(t,y))\|_\Hilb^2
    \\
    &=\boI_\varepsilon(y)-t-\dfrac{1}{\varepsilon}\|z(t,y)-\omega(z(t,y))\|_\Hilb^2,\quad\hbox{for all }(t,y)\in\Lambda_\delta^r.
\end{align*}
Then, using that $\boI_\varepsilon|_{S_r}<0=\boI(0)$, we obtain
\begin{equation}\label{ineq:omega-flow}
\boI(\omega(z(t,y)))<0=\boI(0),\quad\hbox{for all }(t,y)\in\Lambda_\delta^r.
\end{equation}
In particular,
\begin{equation} 
    \boI(\gamma^*(s,y))<0, \quad\hbox{for all }(s,y)\in\left[0,\tfrac{1}{2}\right]\times S_r.
\end{equation}

Let us consider the case $(s,y)\in[1/2,1]\times S_r$. By \eqref{def:omega} and \eqref{eq:charact-lambda-flow}, we deduce
\begin{equation}\label{ineq:proofdelta}
    \boI(\omega(z(\lambda_\delta(y),y)))\leq\boI_\varepsilon(z(\lambda_\delta(y),y))=\boI(\minq)+\delta.
\end{equation}
In other words, $\omega(z(\lambda_\delta(y);y)))\in \{q\in\boK: \boI(q)\leq \boI(\minq)+\delta\}$. Then, for $\eta\in (0,\eta_\sigma)$, Lemma \ref{lemma-LevelSets} provides a number $\delta_\eta>0$ such that, fixing $\delta\in (0,\min\{\sigma,\delta_\eta\})$, one has
\begin{equation}\label{ineq:proofeta}
    \|\omega(z(\lambda_\delta(y),y))-\minq\|_\Ebanach<\eta.
\end{equation}Therefore, by taking $t=2(1-s)$ in the notation of Lemma~\ref{lemma-alpha}, we obtain
\begin{equation}\label{ineq:gamma*-negative}
    \boI(\gamma^*(s,y))\leq\boI(\omega(z(\lambda_\delta(y),y)))+\alpha.
\end{equation}
By \eqref{ineq:proofdelta} and the definition of $\alpha$, it follows that
\begin{equation}\label{ineq:gamma*-negative2}
    \boI(\gamma^*(s,y))\leq\boI_\varepsilon(z(\lambda_\delta(y),y))+\alpha=\delta-\sigma<0.
\end{equation}

To conclude, we focus on proving \eqref{eq:gamma-notzero}. By \eqref{ineq:omega-flow}, $
	\omega(z(t,y))\neq0,$ for all $(t,y)\in\Lambda_\delta^r.$ Hence, the continuity of $\omega(z(\cdot))$ in $\Lambda_\delta^r$ implies
\begin{equation}\label{eq:gamma*-positive1}
    m_\delta:=\min_{\left[0,\tfrac{1}{2}\right]\times S_r
}\|\gamma^*(\cdot)\|_\Hilb= \min_{\Lambda_\delta^r}\|\omega(z(\cdot))\|_\Hilb>0.
\end{equation}
Complementarily, when $(s,y)\in\left[\tfrac{1}{2},1\right]\times S_r$, the embedding \eqref{eq:embedding} yields
\begin{equation}
    c_0\|\gamma^*(s,y)\|_\Hilb\geq\|\gamma^*(s,y)\|_{\textnormal E}\geq\|\minq\|_{\textnormal E}-2(1-s)\|\omega(z(\lambda_\delta(y),y))-\minq\|_{\textnormal E},
\end{equation}
where $c_0$ comes from \eqref{eq:embedding}. Then, by \eqref{ineq:proofeta} we conclude that
\begin{equation}\label{eq:gamma*-positive2}
    c_0\|\gamma^*(s,y)\|_\Hilb\geq\|\minq\|_{\textnormal E}-\eta>0,
\end{equation}
and \eqref{eq:gamma-notzero} is satisfied with $\rho_\delta:=c_0^{-1}\min\{\|\minq\|_{\textnormal E}-\eta,m_\delta\}>0$. In particular, note that we may consider $\rho_\delta<r$, which completes the proof.
\end{proof}

\begin{figure}[ht]
\centering
\begin{tikzpicture}[scale=0.7, every node/.style={transform shape},
    node distance=1.6cm,
    param/.style={
        circle, draw, thick,
        minimum size=0.9cm,
        font=\large
    },
    arrow/.style={
        ->, thick, >=stealth
    }
]

\node[param] (r)       {$r$};
\node[param] (sigma)   [right=of r]                {$\sigma$};
\node[param] (eps)     [above right=0.8cm and 1.6cm of sigma]  {$\varepsilon$};
\node[param] (eta)     [below right=0.8cm and 1.6cm of sigma]  {$\eta$};
\node[param] (delta)   [right=of eta]              {$\delta$};
\node[param] (rho)     [right=of delta]            {$\rho$};

\draw[arrow] (r)     -- (sigma);
\draw[arrow] (sigma) -- (eps);
\draw[arrow] (sigma) -- (eta);
\draw[arrow] (eta)   -- (delta);
\draw[arrow] (delta) -- (rho);

\draw[dashed, rounded corners, gray]
    ($(eps.north west)+(-0.3,0.3)$) rectangle ($(rho.south east)+(0.3,-0.3)$);

\end{tikzpicture}
\caption{Chained dependence of parameters.}
\label{fig:param-chain}
\end{figure}
\subsection{Min-max argument}\label{sect:min-max}

We conclude by proving Theorem~\ref{MainTheorem}. The proof relies on a last lemma, whose statement requires some preliminaries. 

Let $\boI\in\X(\boK)$ satisfy the hypotheses of Theorem~\ref{MainTheorem}. By Lemma~\ref{lemma:abstract-minimizer}, there exists a global minimizer $\minq\in\boK$ with $\boI(\minq)=\min_{\Hilb}\boI<0$. We may also consider that \eqref{hyp:Splitting}  holds with $r\in(0,\|\minq\|_\Hilb)$, without loss of generality, since the local linking is a local property. Moreover, since our aim is to prove the existence of a second nonzero critical point for $\boI$, neither do we lose generality by assuming that \eqref{H:existence_unique_minimizer} and \eqref{H:zero_isolated} hold, after taking $r$ sufficiently small. Summarizing, and recalling that \eqref{hyp:Splitting} implies \eqref{H:negative_levels}, it follows that $\boI$ satisfies the hypotheses of Lemma~\ref{lemma:PVI} and Proposition~\ref{Pro:Gamma}. Therefore, let us fix $\sigma$ and $\varepsilon$ as in \eqref{def:Constant}, and then take $\delta$ and $\rho_\delta$ as in Proposition~\ref{Pro:Gamma}.

On the other hand, take a vector $v\in\Hilb_2$ with $|v|_\Hilb=r$, and consider the set $\boN$ defined in \eqref{def:Set-N}, whose boundary $\partial\boN$ splits as in \eqref{def:BoundarySet-N}. Since $\Hilb=\Hilb_1\oplus\Hilb_2$, the decomposition $q=sv+y$ given in \eqref{def:BoundarySet-N} is unique. As a consequence, by Proposition~\ref{Pro:Gamma}, the map $\gamma_0^*:\partial\boN\to\boK$ given by
\begin{equation}\label{def:gamma0}
    \gamma_0^*(q)=q,\text{ if }q\in\boN_1,\quad\gamma_0^*(q)=\gamma_0^*(sv+y):=\gamma^*(s,y),\text{ if }q\in \boN_2,
\end{equation}
is well defined, continuous with respect to $\|\cdot\|_\Hilb$, and satisfies the estimates \eqref{eq:gamma-negativeflow} and \eqref{eq:gamma-notzero} for all $q\in\partial\boN$.

\begin{lemma}\label{Lemma:MountainPass}
Let $\boI\in\X(\boK)$ satisfy $\PS$, \eqref{hyp:Splitting}, \eqref{hyp:convex}, \eqref{H:existence_unique_minimizer}, and \eqref{H:zero_isolated}, for some $\minq\in\boK$ and $r\in(0,\|\minq\|_\Hilb)$. Let $\sigma$ and $\varepsilon$ be given by \eqref{def:Constant}, and $\delta$ by Proposition~\ref{Pro:Gamma}. Then the map $\gamma_0^*:\partial\boN\to\boK$ defined in \eqref{def:gamma0} satisfies
\begin{equation}\label{eq:MountainPassCondition}
\sup_{q\in\partial\boN}\boI(\gamma_0^*(q))=0<\inf_{\gamma\in\Gamma}\sup_{q\in\boN}\boI(\gamma(q))<\infty,
\end{equation}
where $
    \Gamma:=\{\gamma:\boN\to\Hilb\ \hbox{ continuous and such that }\gamma|_{\partial\boN}=\gamma_0^*\}.$
\end{lemma}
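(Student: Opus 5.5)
The plan splits \eqref{eq:MountainPassCondition} into three parts: the equality on $\partial\boN$, the strict lower bound, and finiteness.

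\textbf{Equality on $\partial\boN$.} On $\boN_1$ we have $\gamma_0^*(q)=q\in\Hilb_1$ with $\|q\|_\Hilb\le r$. Hence $\boI(\gamma_0^*(q))\le 0$ by \eqref{hyp:Splitting}, and equality holds at $q=0$ by Remark~\ref{rem:I(0)}. On $\boN_2$, estimate \eqref{eq:gamma-negativeflow} of Proposition~\ref{Pro:Gamma} gives $\boI\circ\gamma_0^*\le 0$. Together these give $\sup_{\partial\boN}\boI\circ\gamma_0^*=0$.

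\textbf{Finiteness.} I would exhibit one competitor $\gamma\in\Gamma$ with values in $\boK$ that is continuous in $\Hilb$ on the compact set $\boN$. Then $\boI\circ\gamma$ is bounded above. The natural tool is a retraction: $\boN$ is a compact convex set (a half-ball in a finite-dimensional space) and $\gamma_0^*$ is continuous with values in the convex set $\boK$. Dugundji's extension theorem, or an explicit radial extension from a point of $\boN$ such as $\frac{r}{2}v$, gives a continuous extension $\gamma:\boN\to\boK$ whose image is compact in $\Hilb$. Since $\Psi|_\boK$ is continuous in $\|\cdot\|_\Hilb$ by \ref{H1:nonsmooth} and $\boF$ is $\mathcal C^1$, the map $\boI\circ\gamma$ is continuous on the compact set $\boN$, hence bounded. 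This gives $\inf_\Gamma\sup<\infty$.

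\textbf{Strict positivity.} This is the core step and follows \cite[Lemma 3]{Bre-Nir}. I would show that every $\gamma\in\Gamma$ satisfies $\gamma(\boN)\cap\{q\in\Hilb_2:\|q\|_\Hilb=\rho\}\neq\emptyset$ for a fixed $\rho\in(0,\rho_\delta)$, with $\rho_\delta$ from \eqref{eq:gamma-notzero}. Let $P_1,P_2$ be the projections onto $\Hilb_1,\Hilb_2$, and identify $\boN$ with the half-ball $\{(s,y)\in[0,1]\times\Hilb_1\}$ in $\R\times\Hilb_1$. Consider the map
\[
\Phi(q)=\bigl(\|P_2\gamma(q)\|_\Hilb,\;P_1\gamma(q)\bigr)\in\R\times\Hilb_1,
\]
and compute its Brouwer degree with respect to the point $(\rho,0)$.

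On $\boN_1$ we have $\Phi(q)=(0,q)\neq(\rho,0)$. On $\boN_2$ we have $\|\gamma_0^*(q)\|_\Hilb\ge\rho_\delta>\rho$. Measuring with a norm equivalent to $\|\cdot\|_\Hilb$ (e.g.\ $\|P_1\cdot\|+\|P_2\cdot\|$), this means $\Phi(q)$ cannot equal $(\rho,0)$, which would force $\gamma_0^*(q)$ to have norm $\rho$ up to the norm choice. I would fix $\rho$ small relative to $\rho_\delta$ and the projection constants so that this holds.

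Hence the degree is well defined and depends only on $\gamma_0^*$. To compute it, I would homotope $\Phi$ on $\partial\boN$, through maps avoiding $(\rho,0)$, to the standard map $(s,y)\mapsto(s\rho',y)$, which has degree $\pm1$. This is the main obstacle. I would try the linear homotopy $(1-\theta)\Phi+\theta\,\mathrm{std}$ and use that on $\boN_1$ both maps coincide with $(0,y)$, while on $\boN_2$ both avoid a neighborhood of $(\rho,0)$ by the norm bound, possibly after rescaling. Failing that, I would use the connectedness argument of \cite{Bre-Nir}, which exploits the same structure.

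Once the intersection is established, pick $q$ with $\gamma(q)\in\Hilb_2$ and $\|\gamma(q)\|_\Hilb=\rho$. Then \eqref{hyp:Splitting} gives $\sup_\boN\boI\circ\gamma\ge g(\rho)>0$, uniformly in $\gamma$, so $\inf_\Gamma\sup\ge g(\rho)>0$.
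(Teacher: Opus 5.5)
Your outline is sound, and two of its three parts coincide with the paper: the equality on $\partial\boN$ (left implicit there), and the positivity step, for which the paper simply invokes \cite[Lemma 3]{Bre-Nir} and then \eqref{hyp:Splitting}. Your uniform bound $\sup_{\boN}\boI\circ\gamma\ge g(\rho)$, with $\rho\in(0,\rho_\delta)$ fixed independently of $\gamma$, is exactly what turns \eqref{eq:supremumGamma} into a strictly positive infimum.

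Finiteness is where you genuinely differ. The paper builds an explicit competitor $\gamma_{r_\sigma}$: it extends $\lambda_\delta$ and the flow to the annulus $A([r_\sigma,r])$, redoes the Lipschitz estimates there, and interpolates as in \eqref{def:gamma-example}. You extend $\gamma_0^*$ abstractly, either by Dugundji (whose extension takes values in the convex hull of $\gamma_0^*(\partial\boN)\subset\boK$) or radially from an interior point such as $\tfrac{1}{2}v$. Your route uses only continuity of $\gamma_0^*$, convexity of $\boK$ and continuity of $\boI|_{\boK}$ in $\|\cdot\|_\Hilb$. It is therefore shorter and does not depend on how $\gamma^*$ was built. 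An explicit formula, by contrast, must be checked by hand against $\gamma_0^*$ on $\partial\boN$, and this is delicate: at $s=0$, \eqref{def:gamma-example} involves $h(0,y)=\omega(y)$, and $\omega(y)\neq y$ for $0<\|y\|_\Hilb\le r$ by \eqref{H:zero_isolated}. For the same reason, the two pieces of \eqref{def:gamma0} already disagree on $S_r=\boN_1\cap\boN_2$, a point you and the paper both take for granted. Your argument goes through unchanged for any repaired $\gamma_0^*$ that is continuous into $\boK$ and keeps \eqref{eq:gamma-negativeflow} and \eqref{eq:gamma-notzero}, whereas the explicit formula would have to be redone.

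The self-contained degree computation, which you flag as the main obstacle, does have a gap as written. The setup itself is fine, and simpler than you fear: $\Phi(q)=(\rho,0)$ forces $P_1\gamma_0^*(q)=0$, hence $\|\gamma_0^*(q)\|_\Hilb=\rho<\rho_\delta$, so no equivalent norm or extra smallness of $\rho$ is needed. The problem is the linear homotopy on $\boN_2$. That $\Phi(q)$ and $\mathrm{std}(q)$ each avoid a neighbourhood of $(\rho,0)$ does not stop the segment between them from passing through $(\rho,0)$. For example, if $P_1\gamma_0^*(q)$ is a negative multiple of the $\Hilb_1$-component of $q$, some convex combination has vanishing second coordinate, and nothing you have said controls its first one.

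What you do know on $\boN_2$ is stronger: the first coordinate of $\Phi$ is $\ge 0$, and it is $\ge\rho_\delta$ whenever the second one vanishes. The same holds for $\mathrm{std}(sv+y):=(sr,y)$, i.e.\ $\Phi$ with $\gamma=\mathrm{id}$. So first deform both boundary maps by adding $\theta\rho_\delta\chi$ to the first coordinate, $\theta\in[0,1]$, where $\chi\equiv 1$ on $\boN_2$ and $\chi(y)=\|y\|_\Hilb/r$ on $\boN_1$. This never hits $(\rho,0)$: on $\boN_2$ the property just stated persists, and on $\boN_1$ the second coordinate $y$ vanishes only where the first is $0$. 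At $\theta=1$ the two maps agree on $\boN_1$ and have first coordinate $\ge\rho_\delta>\rho$ on $\boN_2$, a convex region. The linear homotopy is now legitimate, and $\deg(\Phi,\mathrm{int}\,\boN,(\rho,0))=\deg(\mathrm{std},\mathrm{int}\,\boN,(\rho,0))=\pm 1$, because $\mathrm{std}$ is linear and injective and $(\rho,0)=\mathrm{std}(\tfrac{\rho}{r}v)$ with $\tfrac{\rho}{r}v\in\mathrm{int}\,\boN$. Equivalently, $\Phi(\boN_2)$ misses the ray $\{(a,0):a\le\rho\}$, which $\Phi(\boN_1)$ crosses exactly once, at $y=0$. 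With this repair your positivity step no longer needs the citation; without it, it rests on \cite[Lemma 3]{Bre-Nir} exactly as the paper's does.
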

\begin{proof}
First, we recall that \cite[Lemma 3]{Bre-Nir} applies directly to the family $\Gamma$. That is, for all $\gamma\in\Gamma$ and all $\rho\in(0,\rho_\delta)$, there exists  $u:=u(\gamma,\rho)\in\boN$ such that
\begin{equation}\label{eq:intersectionBN}
\gamma(u)\in\Hilb_2\quad\hbox{and}\quad\|\gamma(u)\|_\Hilb=\rho.
\end{equation}
Therefore, since $\rho<\rho_\delta<r$, by  \eqref{hyp:Splitting} we have that
\begin{equation}\label{eq:supremumGamma}
    \sup_{q\in\boN}\boI(\gamma(q))>0,\quad\hbox{for all }\gamma\in\Gamma.
\end{equation}
To conclude the proof, it suffices to show that the infimum in \eqref{eq:MountainPassCondition} is finite, which follows from the existence of a continuous map $\gamma \in \Gamma$ whose image is contained in $\boK$. To this end, recalling Remark~\ref{rem:I(0)}, there exists $r_\sigma \in (0,r)$, sufficiently close to $r$, such that
\begin{equation}
    \sigma\in(0,\min_{A(I)}\boI-\boI(\minq)),
\end{equation}
where $I=[r_\sigma,r]$ and $
A(I):=\{y\in\Hilb_1:\ \|y\|_\Hilb\in I\}.$ 
In particular, Corollary~\ref{cor-SplittingRegularized} applies to the annulus $A(I)$ for the fixed parameters $\sigma$ and $\varepsilon$. Consequently, for the parameter $\delta\in(0,\sigma)$ given by Proposition~\ref{Pro:Gamma}, the function $\lambda_\delta$ defined in \eqref{def:lambda} extends trivially as a continuous function to the domain $A(I)$. Moreover, the sets defined in Corollary~\ref{cor:continuity} can be naturally generalized as follows:
\begin{equation}
\Lambda_\delta:=\bigcup_{j\in[r_\sigma,r]}\Lambda_\delta^j,\qquad\Omega_\delta:=\bigcup_{j\in[r_\sigma,r]}\Omega_\delta^j.
\end{equation}
By the same arguments as in the proof of Corollary~\ref{cor:continuity}, the function $z:\Lambda_\delta\to\Omega_\delta$, defined by $z(t,y)=x(t;y)$, is Lipschitz. Therefore, by composition of Lipschitz functions (see \eqref{def:omega}, Remark~\ref{remark:Lipschitz}), the map $h:[0,1/2]\times A(I)\to\boK$ defined by
\begin{equation}\label{def:function-h}
    h(s,y):=\omega(z(2s\lambda_\delta(y),y)),\quad\hbox{for all }(s,y)\in \left[0,\frac12\right]\times A(I),
\end{equation}
inherits the same regularity. Moreover, $h|_{[0,1/2]\times S_r}=\gamma^*$ by construction.

To finish, let $\gamma_{r_\sigma}: \boN \to \Hilb$ be defined by $\gamma_{r_\sigma}(q) := \gamma_{r_\sigma}(sv + y)$ with
    \begin{equation}\label{def:gamma-example}
      \gamma_{r_\sigma}(q)=\left\{\begin{array}{ll}
            \gamma^*(s,y),&\hbox{for } (s,y)\in[0,1]\times S_r, \\
            \dfrac{\|y\|_\Hilb-r_\sigma}{r-r_\sigma}h(s,y)+\dfrac{r-\|y\|_\Hilb}{r-r_\sigma}y, &\hbox{for }s\in\left[0,\tfrac{1}{2}\right),\ \|y\|_\Hilb\in[r_\sigma,r),\\
            (2s-1)\minq+2(1-s)\left(\dfrac{\|y\|_\Hilb-r_\sigma}{r-r_\sigma}h(s,y)+\dfrac{r-\|y\|_\Hilb}{r-r_\sigma}y\right),&\hbox{for }s\in\left[\tfrac{1}{2},1\right],\ \|y\|_\Hilb\in[r_\sigma,r),\\
            y,&\hbox{for }s\in\left[0,\tfrac{1}{2}\right),\ \|y\|_\Hilb\in[0,r_\sigma),\vspace{1mm}\\
            (2s-1)\minq+2(1-s)y,&\hbox{for }s\in\left[\tfrac{1}{2},1\right],\ \|y\|_\Hilb\in[0,r_\sigma).
        \end{array}\right.
    \end{equation}
From this point, it is straightforward to see that $\gamma_{r_\sigma}\in\Gamma$. Moreover, note that every component is a convex combination of elements in $\boK$, hence $\gamma_{r_\sigma}(q)\in\boK$, for all $q\in\boN$, which ends the proof.
\end{proof}

\begin{proof}[Proof of Theorem~\ref{MainTheorem}]

As commented above, the result is proven once we obtain the existence of a second nonzero critical point. Hence we may restrict to consider that $\boI\in\X(\boK)$ also satisfies \eqref{H:existence_unique_minimizer} and \eqref{H:zero_isolated}.

Let $r\in(0,\|\minq\|)$ be given by \eqref{hyp:Splitting}. Fix the constants $\sigma$ and $\varepsilon$ as in \eqref{def:Constant}, and let $\delta$ and $\rho_\delta$ be given by Proposition~\ref{Pro:Gamma}. Let $\gamma^*$ be defined in \eqref{def:gamma*}. In addition, fix $v\in\Hilb_2$ with $\|v\|_\Hilb=r$, and let $\boN$ denote the set \eqref{def:Set-N}. Observe that $\boN$ is compact, thereby it is a complete metric with respect to the norm $\|\cdot\|_\Hilb$. Therefore, as the inequality \eqref{eq:MountainPassCondition} holds by Lemma~\ref{Lemma:MountainPass}, in this setting we may apply \cite[Theorem 1]{ABT} by choosing $K:=\boN$, $K_0:=\partial\boN$ and $\gamma_0:=\gamma_0^*$, each of them given in \eqref{def:Set-N}, \eqref{def:BoundarySet-N} and \eqref{def:gamma0}, respectively. As a result, there exists a Palais–Smale sequence $\{q_n\} \subset \boK$ for $\boI$ at the positive level $c\in(0,\infty)$ given in \eqref{eq:MountainPassCondition}, i.e.
\begin{equation}
c:=\inf_{\gamma\in\Gamma}\sup_{q\in\boN}\boI(\gamma(q)).
\end{equation}
Then, by  $\PS$, up to a subsequence, $\{q_n\}$ converges in $\tau$ to a critical point of $\boI$ at level $c$, which completes the proof.
\end{proof}

\section{The Lorentz force equation in the absence of scalar potential}\label{Sect:LFE}
	In this section we analyze the dynamics of the Lorentz force equation \eqref{eq:LFE-intro}, where both the speed of light in vacuum and the charge-to-mass ratio are normalized to one, without loss of generality. The electromagnetic field is described in terms of the vector potential $A\in\mathcal{C}^1(\R^4;\R^3)$ by
	\begin{equation}\label{Potential-Fields}
		B(t,x)=\nabla\times A (t,x),\qquad E(t,x)=-\partial_t A(t,x).
	\end{equation}
	 We recall that in \eqref{eq:LFE-intro} (as well as in \eqref{Potential-Fields}) we use the standard notation for the curl ($\nabla\times$), cross product ($\times$), and partial derivative ($\partial_t$). We will also use $\nabla$ to denote the gradient with respect to the spatial variables, and $\partial^2_{k,j}:=\partial_k\partial_j$ for the second-order partial derivatives, for $k,j\in\{t,x_1,x_2,x_3\}$.
	
	Let us fix a period $T>0$. We focus on the line of research initiated in \cite{GLM} concerning the qualitative analysis of the dynamics of Equation \eqref{eq:LFE-intro} in the \textsl{purely relativistic regime}, meaning that all magnetostatic configurations in \eqref{Potential-Fields} are excluded (see Remark~\ref{remark:magnetostatic} below). In \cite[Theorem 2.1]{GLM}, the existence of a non-constant periodic solution for Equation \eqref{eq:LFE-intro} is established by minimizing the associated action functional \eqref{eq:functional} (see the variational details below). The result covers a large class of vector potentials in $\mathcal{C}^1_T(\R^4;\R^3)$, where
	\begin{equation}
		\mathcal{C}^k_T(\R^4;\R^3):=\{A\in\mathcal{C}^k(\R^4;\R^3):\ A(\cdot,x) \hbox{ is $T$-periodic }\forall x\in\R^3\},\quad k\in\mathbb{N}.
	\end{equation} 
	
	We provide in Theorem~\ref{MainTheorem:LFE} the existence of a second non-constant periodic solution as an application of Theorem~\ref{MainTheorem}. To do so, we assume the existence of an equilibrium state $x_0 \in \R^3$ for the electromagnetic force, namely a point where the electric field vanishes. In the setting of \eqref{eq:LFE-intro}, this condition reads
	\begin{equation}\label{def:equilibrium}
		\partial_tA(t,x_0)=0,\hbox{ for all }t\in[0,T].
	\end{equation}
	Moreover, we impose $x_0$ to be an \textsl{isolated equilibrium}, that is, there exists $\varsigma>0$ such that
	\begin{equation}\label{def:isolated2}
		\partial_t A(\cdot,x)\not\equiv 0,\quad\text{for all }x\in\R^3\ \hbox{with }|x-x_0|\in(0,\varsigma].
	\end{equation}
	We denote  the set of isolated equilibria by
	\begin{equation}\label{def:EquilibriaSet}
		\Upsilon(A) := \{x_0 \in \R^3 : x_0 \text{ satisfies both \eqref{def:equilibrium} and \eqref{def:isolated2}}\}.
	\end{equation}

	We now state our main result concerning the dynamics of \eqref{eq:LFE-intro}.
	\begin{theorem}\label{MainTheorem:LFE}
		Let $A\in\boC_T^2(\R^4;\R^3)$ satisfying  $\partial_tA\not\equiv0$ and the following properties:
		\begin{align}
        \label{cond:A-decay}
			&\lim_{|x|\to\infty} \big(|\partial_t A(t,x)| + |\nabla A(t,x)|\big)=0,\quad\text{uniformly in }t,
            \\
            \label{hyp:bound-secondderivatives}
            &\nabla A(t,\cdot)\in W^{1,\infty}(\R^3),\quad\text{with }\sup_{t\in\R}\|\nabla A(t,\cdot)\|_{W^{1,\infty}(\R^3)}<\infty,
            \\
            \label{def:classA0}
            &\|\nabla A(\cdot,x_0)\|_{\infty}<\frac{\pi}{2\sqrt{3}T},\quad\text{for some }x_0\in\Upsilon(A).
		\end{align}
		Then \eqref{eq:LFE-intro} admits at least two non-constant periodic solutions.
	\end{theorem}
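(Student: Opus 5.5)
We will apply Theorem~\ref{MainTheorem} to the Poincaré action functional on $\Hilb=H^1_T(\R;\R^3)$ (the $T$-periodic Sobolev space), $\Ebanach=L^\infty$, $\boK=\{q\in\Hilb:\|\dq\|_\infty\le1\}$, with $\tau$ the topology of Remark~\ref{Remarks:Prototypical-Examples}. We recenter at the equilibrium, $q\mapsto q-x_0$, so that $x_0=0$. We split
\[
\boI(q)=\Psi(q)+\boF(q),\qquad \Psi(q)=\int_0^T\Bigl(1-\sqrt{1-|\dq|^2}\Bigr)dt \ \ (q\in\boK),\qquad \boF(q)=\int_0^T \dq\cdot A(t,q)\,dt .
\]
Here $\Psi=\infty$ off $\boK$, and we set $\Hilb_1=\R^3$ (constants) and $\Hilb_2=\{q:\int_0^T q=0\}$.

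\textbf{Step 1: structural hypotheses.} We check \ref{H1:nonsmooth} and \ref{H2:smooth} first. $\Psi$ is convex, and it is $H^1$-continuous on $\boK$ by dominated convergence. It is $\tau$-lsc because it is convex and weak$^*$-lsc in $\dq$. Using $A\in\boC^2_T$ together with \eqref{hyp:bound-secondderivatives}, $\boF$ is $\boC^1$ on $H^1$. It is locally uniformly continuous in $L^\infty$ on $\boK$, since $|\dq|\le1$ and $A$ is locally Lipschitz. Convexity \eqref{hyp:convex} then follows from $\|\boF''\|\le C$, where $C$ depends on $\sup_t\|\nabla A\|_{W^{1,\infty}}$, so $\mu=C/2$ works.

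\textbf{Step 2: global properties.} Boundedness from below holds because $\boK$ is invariant under constant translations and $|\boF(q)|\le T\sup|A|$ locally. We use the integration by parts $\int\dq\cdot A=-\int q\cdot\ldots$ and argue as in \cite[Theorem 2.1]{GLM}: \eqref{cond:A-decay} gives the coercivity-type control, and $\partial_tA\not\equiv0$ gives $\inf\boI<0$. We take the $\PS$ condition at nonzero levels from the argument in \cite{GLM,ABT}. A Palais--Smale sequence satisfies $\|\dq_n\|_\infty\le 1$. The constants $\bar q_n$ stay bounded at nonzero level by \eqref{cond:A-decay}, since escaping to infinity forces $\boI(q_n)\to0$ (compare Remark~\ref{Remark:PS-0}). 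Hence we obtain $\tau$-convergence via Arzel\`a--Ascoli and weak$^*$ compactness. The limit is critical because of $\tau$-lsc of $\Psi$ and the usual Szulkin limiting argument.

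\textbf{Step 3: local linking \eqref{hyp:Splitting}.} On constants $q\equiv y$ we have $\dq=0$, so $\boI(y)=0\le0$. On $\Hilb_2$, Taylor expansion of $A(t,q)$ at $0$ uses $A(t,0)$ with $\int\dq\cdot A(t,0)=-\int q\cdot\partial_tA(t,0)=0$ by \eqref{def:equilibrium}. The next term is
\[
\boF(q)=\int_0^T \dq\cdot\nabla A(t,0)q\,dt+O(\|q\|_\infty\|q\|_{H^1}^2).
\]
Then $\Psi(q)\ge\frac12\|\dq\|_2^2$, and the Wirtinger inequality $\|q\|_2\le\frac{T}{2\pi}\|\dq\|_2$, combined with a matrix-norm bound producing the factor $\sqrt3$, yields
\[
\boI(q)\ge \Bigl(\tfrac12-\sqrt3\,\tfrac{T}{2\pi}\|\nabla A(\cdot,0)\|_\infty\Bigr)\|\dq\|_2^2-C\|q\|^3 .
\]
By \eqref{def:classA0} the bracket is positive, which gives $g(s)=cs^2$ for small $s$.

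\textbf{Step 4: conclusion.} Theorem~\ref{MainTheorem} provides two nonzero critical points. Critical points are solutions of \eqref{eq:LFE-intro} by the standard regularity argument of \cite{ABT}. They are nonconstant: a constant critical point $y$ has critical-point inequality reducing to $\partial_tA(\cdot,y)\equiv0$, and the shift $0$ was the only relevant equilibrium near it. The nonzero critical points we obtain lie at levels $\boI(\minq)<0$ and $c>0$, whereas constants have level $0$, so they are nonconstant. We expect the main obstacle to be Step 2, namely the $\PS$ condition in $H^1$ rather than in the Lipschitz space. The $\tau$-limit must be shown critical in the sense of Definition~\ref{def:CriticalPoint} with only $\epsilon_n\|p-q_n\|_\Hilb$ control, and the error terms must be handled uniformly for test functions $p\in\boK$.
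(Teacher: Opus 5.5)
Your plan follows the paper's route. You recentre at the isolated equilibrium, verify \ref{H1:nonsmooth}--\ref{H2:smooth}, get \eqref{hyp:convex} from a uniform bound on $\boF''$, and get the local linking from $\Psi(q)\ge\frac12\|\dq\|_2^2$, Wirtinger and the smallness of $\nabla A$ at the equilibrium. You then take the negative minimum from \cite[Theorem 2.1]{GLM} and apply Theorem~\ref{MainTheorem}. Your Step 3 is a valid substitute for the paper's mean-value bound $|A(t,x)|\le\frac{\ell}{2}|x|$: you note $\int_0^T\dq\cdot A(t,0)\,dt=0$ and then expand to second order, with a cubic remainder controlled by \eqref{hyp:bound-secondderivatives}. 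With the sharp constant $T/(2\pi)$ you even have room to spare in \eqref{def:classA0}. The gap is in Step 2, which you rightly call the crux but do not resolve.

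\textbf{Bounded means.} \eqref{cond:A-decay} alone does not force $\boI(q_n)\to0$ when $|\bar q_n|\to\infty$. Via \cite[Lemma 3.4]{GLM} it only gives $\boF(q_n)\to0$, hence $c=\lim\Psi(q_n)\ge0$. Nothing in your argument keeps $\Psi(q_n)$ from staying away from zero. Remark~\ref{Remark:PS-0} only exhibits escaping Palais--Smale sequences at level $0$; it does not show that every escaping one has level $0$. The missing idea is to test \eqref{eq:PalaisSmale} with the constant $p=\bar q_n$, where $\Psi$ vanishes. With $\tilde q_n=q_n-\bar q_n$ this gives
\[
-\boI(q_n)-\int_0^T\tilde q_n\cdot\boE(t,q_n,\dq_n)\,dt\ \ge\ -\epsilon_n\|\tilde q_n\|_\Hilb ,
\]
and $\|\tilde q_n\|_\Hilb$ is uniformly bounded because $\|\tilde q_n\|_\infty\le\sqrt3T$. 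You then need a second vanishing statement, $\int_0^T\tilde q_n\cdot\boE(t,q_n,\dq_n)\,dt\to0$, again from the decay of $\nabla A$. This gives $c\le0$, hence $c=0$.

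\textbf{Level of the limit.} For the $\tau$-limit $q$, lower semicontinuity gives only $\boI(q)\le c$. But $\PS$ requires $\boI(q)=c$, and this equality is used essentially in Lemma~\ref{lemma:PVI}, Corollary~\ref{cor:continuity} and the final mountain-pass step. You must test \eqref{eq:PalaisSmale} at $p=q$ and show $\boF'(q_n)[q-q_n]\to0$, although $q_n\to q$ only in $L^\infty$. This needs the extension of $\boF'(q)$ to $\Ebanach$ by integration by parts, with the bound $|\boF'(q)[\varphi]|\le M(q)\|\varphi\|_\infty$ of \eqref{eq:F'estimate-LFE}; it yields $\limsup\Psi(q_n)\le\Psi(q)$. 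The same extension is what lets you pass to the limit in $\boF'(q_n)[p-q_n]$ for arbitrary $p\in\boK$.

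\textbf{Step 1 and boundedness below.} The same extension is also what proves the $L^\infty$ local uniform continuity of $\boF$ claimed in Step 1. The term $\int_0^T(\dq-\dpp)\cdot A(t,p)\,dt$ is not controlled by $|\dq|\le1$ and the Lipschitz property of $A$ alone. Likewise, your boundedness-from-below argument fails as written, since $A$ need not be bounded. Either cite \cite[Theorem 2.1]{GLM}, as the paper does, or write $\boF(q)=\int_0^T\dq\cdot\bigl(A(t,q)-A(t,\bar q)\bigr)\,dt-\int_0^T\tilde q\cdot\partial_tA(t,\bar q)\,dt$ and use the boundedness of $\nabla A$ and $\partial_tA$.

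\textbf{Step 4.} Make explicit why the critical points lie at nonzero levels. Theorem~\ref{MainTheorem} only provides \emph{nonzero} critical points, and since $\Hilb_1=\R^3$ such a point could a priori be a nonzero constant, i.e.\ another equilibrium. Isolation \eqref{def:isolated2} of the equilibrium at $0$ gives \eqref{H:zero_isolated} once $r\le\sqrt T\varsigma$. Then the second critical point produced in the proof of Theorem~\ref{MainTheorem} lies either at a negative level or at the mountain-pass level $c>0$. Only after that does the comparison with $\boI|_{\R^3}\equiv0$ give non-constancy.
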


	\begin{remark}\label{remark:magnetostatic}
		From a physical point of view, we are considering smooth electromagnetic fields that are periodic in time and decay at infinity in the space variable. The assumption $\partial_tA\not\equiv0$ rules out the magnetostatic fields
		\begin{equation}\label{def:magnetostatic}
			B(t,x)=B(x),\quad\text{and}\quad E(t,x)\equiv0,\quad\text{for all }(t,x)\in\R^4.
		\end{equation}
		In that regime, the modulus $|\dot q(t)|$ is a first integral of the dynamics, which reduces Equation \eqref{eq:LFE-intro} to a second-order Newton–Lorentz equation, thereby losing its relativistic character. In addition, \eqref{def:classA0} provides a quantitative estimate of the magnetic field at the equilibrium point.  Mathematically, the field given in \eqref{Potential-Fields} is the unique solution for Maxwell's equations under condition \eqref{cond:A-decay} (in the distributional sense, see \cite{GM,GT2} for further details), while the periodicity of the field is necessary for the existence of periodic solutions.
	\end{remark}
	
	The rest of the section is devoted to the verification of the hypotheses of Theorem~\ref{MainTheorem} in the setting of Theorem~\ref{MainTheorem:LFE}. This is a nontrivial task and needs to be carried out in several steps.  In this context, condition \eqref{hyp:convex} requires higher regularity of the vector potential $A(t,x)$ than that assumed in \cite[Theorem 2.1]{GLM}, as well as the global bound \eqref{hyp:bound-secondderivatives} for the second-order spatial derivatives of $A$.
	
	\subsection{Functional framework}\label{Sect:LFE-Functional}
	Following the notation of the abstract setting from Section~\ref{Sect:FunctionalFramework}, we define the Hilbert space $\Hilb$ by
	\begin{equation}\label{Hilbert}
		\Hilb:=\{q\in H^1_{\text{loc}}(\R;\R^3):\,q\text{ is }T\text{-periodic}\},
	\end{equation}
	endowed with the usual inner product and norm of the Sobolev space $H^1$. Moreover, $\Hilb$ admits the canonical decomposition $\Hilb=\Hilb_1\oplus\Hilb_2$, where $\Hilb_1$ is naturally identified with $\R^3$ as the set of constant functions, and $\Hilb_2$ denotes the subspace of zero-mean functions. In this context, we define
	\begin{equation}\label{def:sets-LFE}
		\Ebanach:=L^\infty([0,T];\R^3),\quad \boK:=\{q\in\Hilb:\ \|\dq\|_\infty\leq1\},
	\end{equation}
	and, analogously to \eqref{eq:topology-K}, we introduce on $\boK$ the topology $\tau$ given by
	\begin{equation}\label{def:topology-LFE}
		q_n\to^\tau q\Longleftrightarrow\left\{\begin{array}{ll}
			q_n\to q,&\text{strongly in }\Ebanach,
			\\
			\dot{q}_n\rightharpoonup \dot{q},&\text{in the weak$^*$ topology }\sigma(L^\infty,L^1).
		\end{array}\right.
	\end{equation}
	Consequently, the embedding condition \eqref{eq:embedding} holds and, by Lemma~\ref{appendix-lemma:tau-domain}, $\boK$ is a $\tau$-domain.  In the remainder of this section, the electric field $E(t,x)$ does not appear, so the abuse of notation in \eqref{def:sets-LFE} causes no ambiguity.
	
	For any $A\in\mathcal{C}_T^1\left(\R^4;\R^3\right),$ the action functional  $\boI:\Hilb\to (-\infty,\infty]$ associated to \eqref{eq:LFE-intro} is defined by
	\begin{equation}\label{eq:functional}
		\boI=\Psi+\boF,\quad\Psi(q)=\left\{\begin{array}{ll}
			\displaystyle\int_0^T \left(1-\sqrt{1-|\dq|^2}\right)dt,& q\in\boK,\\
			\infty,&  q\in\Hilb\setminus\boK,
		\end{array}\right.\quad
		\boF(q)=\int_0^T \dq\cdot A(t,q)dt,\ q\in\Hilb.
	\end{equation}
	
	The next result shows that \eqref{eq:functional} fits into the abstract framework $\X(\boK)$ introduced in Section~\ref{Sect:FunctionalFramework}, here established in the setting of \eqref{Hilbert}, \eqref{def:sets-LFE} and \eqref{def:topology-LFE}. 
	\begin{lemma}\label{Lemma:Set-X-LFE}
		For every $A\in\mathcal{C}_T^1(\R^4;\R^3)$, the functional \eqref{eq:functional} belongs to $\X(\boK)$. Furthermore, for every $q\in\boK$, the extension  $\boF'(q): \Ebanach\to \R$ is well-defined as a continuous linear functional, and satisfies
		\begin{equation}\label{eq:F'estimate-LFE}
			\left|\boF'(q)[\varphi]\right|\leq {\hbox{M}}(q)\|\varphi\|_\infty ,\quad\hbox{for all }q\in\boK, \varphi\in \Ebanach,
		\end{equation}
		where $\hbox{M}(q):=\max\left\{\|\partial_jA(\cdot,q(\cdot))\|_{1}:j\in\{t,x_1,x_2,x_3\}\right\},$  and 
		\begin{equation}\label{eq:conv-F'-LFE}
			\boF'(q_n)[\varphi_n]\to\boF'(q)[\varphi],\quad\text{as }n\to\infty,
		\end{equation}
		for every $\{q_n\}\subset\boK$ and $\{\varphi_n\}\subset \Ebanach$ satisfying $q_n\rightharpoonup^\tau q$ and $\varphi_n\to\varphi$ in $\Ebanach$ for some $q\in\boK,\varphi\in \Ebanach$.
	\end{lemma}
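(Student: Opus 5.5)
We have to check \ref{H1:nonsmooth}, \ref{H2:smooth} for $\Psi$ and $\boF$ in \eqref{eq:functional}, and then prove \eqref{eq:F'estimate-LFE} and \eqref{eq:conv-F'-LFE}.

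\emph{Step 1: the functional $\Psi$.} $\Psi$ is proper, since $\Psi(0)=0$. It is convex because $v\mapsto 1-\sqrt{1-|v|^2}$ is convex on the closed unit ball of $\R^3$ and $\boK$ is convex. Its domain is exactly $\boK$, because the integrand is bounded by $1$ on $\boK$. For continuity of $\Psi|_\boK$ in $\|\cdot\|_\Hilb$, take $q_n\to q$ in $H^1$. Then $\dq_n\to\dq$ in $L^2$, so a subsequence converges a.e. Since the integrand is continuous and bounded by $1$ on the unit ball, dominated convergence applies on every subsequence, which gives convergence of the full sequence. For $\tau$-lower semicontinuity, suppose $\dq_n\rightharpoonup\dq$ weak$^*$ in $L^\infty$. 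Then $\dq_n\rightharpoonup \dq$ weakly in $L^2$ on $[0,T]$. The functional $v\mapsto\int_0^T(1-\sqrt{1-|v|^2})$, extended by $+\infty$ outside the unit ball, is convex and strongly lsc on $L^2$ (Fatou on a.e.\ subsequences), hence weakly lsc. Note that $\tau$ is only used through sequences, as in Lemma~\ref{appendix-lemma:tau-domain}.

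\emph{Step 2: the functional $\boF$.} For $\mathcal C^1$ smoothness on $H^1$, use the embedding $H^1\hookrightarrow L^\infty$ on a period. The candidate derivative is
\[
\boF'(q)[\varphi]=\int_0^T\Big(\dot\varphi\cdot A(t,q)+\sum_{j}\dq_j\,\partial_{x_j}A(t,q)\cdot\varphi\Big)dt .
\]
Since $A$ is $\mathcal C^1$ and $q$ ranges over an $L^\infty$-bounded set, $A(t,q)$ and $\nabla A(t,q)$ are uniformly continuous there, and the usual Nemytskii argument gives Fréchet differentiability and continuity of $q\mapsto\boF'(q)$.

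The extension to $\Ebanach$ needs the $\dot\varphi$ term removed. For this, integrate by parts using periodicity:
\[
\int_0^T\dot\varphi\cdot A(t,q)\,dt=-\int_0^T\varphi\cdot\big(\partial_tA(t,q)+D_xA(t,q)\dq\big)dt .
\]
This yields
\[
\boF'(q)[\varphi]=\int_0^T\varphi\cdot\Big(\sum_j \dq_j\big(\nabla A_j(t,q)-\partial_{x_j}A(t,q)\big)-\partial_tA(t,q)\Big)dt,
\]
i.e.\ the Lorentz force paired with $\varphi$. For $q\in\boK$ we have $|\dq_j|\le1$, so each term is bounded by $\|\varphi\|_\infty$ times an $L^1$ norm of a first derivative of $A$ along $q$. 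This gives \eqref{eq:F'estimate-LFE}, with a dimensional constant that can be absorbed into $\mathrm M(q)$ or tracked explicitly. The formula defines the continuous extension of $\boF'(q)$ to $\Ebanach$, and it agrees with the original derivative on $\Hilb$ by density.

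\emph{Step 3: local uniform continuity \eqref{hyp:F-dualE}.} The identity $\boF(q)-\boF(p)=\int_0^1\boF'(p+s(q-p))[q-p]\,ds$ holds, and the segment stays in $\boK$ by convexity. On $B_\boK(q_0,1)$ all curves lie in a fixed compact subset of $\R^3$, so $\mathrm M$ is bounded there by a constant $C$. Hence $|\boF(q)-\boF(p)|\le C\|q-p\|_\infty$, which gives the required $\delta$.

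\emph{Step 4: the convergence \eqref{eq:conv-F'-LFE}.} Write $\boF'(q_n)[\varphi_n]=\int\varphi_n\cdot G_n$, where $G_n$ is the integrand above. Split the difference as
\[
\int(\varphi_n-\varphi)\cdot G_n+\int\varphi\cdot(G_n-G).
\]
The first term vanishes by the uniform bound $\|G_n\|_1\le C$. The second is the main obstacle: $G_n$ is bilinear-type in $\dq_n$ (weak$^*$ convergent) times $\nabla A(t,q_n)$. The factor $\nabla A(t,q_n)$ converges uniformly, because $q_n\to q$ uniformly and $\nabla A$ is uniformly continuous on compacts. The plan is to replace $\nabla A(t,q_n)$ by $\nabla A(t,q)$, with error at most $\|\dq_n\|_\infty\|\varphi\|_1\cdot o(1)$. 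What remains is $\int\dq_n\cdot h$ with $h\in L^1$, and this converges to $\int\dq\cdot h$ by weak$^*$ convergence. The $\partial_tA(t,q_n)$ term converges uniformly.
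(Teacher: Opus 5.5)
Your proposal is correct and follows the paper's route: integration by parts to the Lorentz-force form \eqref{eq:F'-extended}, $L^1$ bounds on first derivatives of $A$ along $q$, \eqref{hyp:F-dualE} from that bound along segments in $\boK$, and the splitting argument for \eqref{eq:conv-F'-LFE}. You simply write out the ``standard arguments'' the paper omits; the one slip is that the second term of your first formula for $\boF'(q)$ should be $\sum_j \dq_j\,\nabla A_j(t,q)\cdot\varphi$, which is what your final formula actually uses. Your caveat that \eqref{eq:F'estimate-LFE} needs a dimensional constant is justified: $A(x)=\tfrac12 B_0\times x$ with a unit-speed circular $q$ in the plane orthogonal to $B_0$ gives ratio $2$, which is harmless for the later uses of the estimate.
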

	\begin{proof}
It is immediate that $\Psi:\Hilb \to (-\infty,\infty]$ is a convex, proper functional with closed domain $\boK \subset \Hilb$, such that $\Psi|_\boK$ is continuous with respect to the norm $\|\cdot\|_\Hilb$. Moreover, \cite[Proposition 1]{BerJebMaw-Neumann}  directly implies that $\Psi|_\boK$ is lower semicontinuous in $\tau$. Hence $\Psi$ satisfies \ref{H1:nonsmooth}.
		
For any $A\in\mathcal{C}^1(\R^4;\R^3)$, the continuity of the potential, together with the embedding $\Hilb\subset \Ebanach$, implies that the map $t\mapsto A(t,q(t))$ is bounded for every $q\in\Hilb$. In particular, Hölder's inequality yields $|\boF(q)|\leq\|\dq\|_2 \|A(\cdot,q)\|_2,$ for all $q\in\Hilb$, so $\boF:\Hilb\to\R$ is well defined. Moreover, it is clear that $\boF$ is continuous on $\Hilb$.
		
		For convenience of notation, we introduce the function $\boE:\R\times\R^3\times\R^3\to\R^3$ given by 
		\begin{equation}\label{def:epsF'}
			\boE(t,x,y)=\Big(y\cdot \partial_{x_1}A(t,x),  y\cdot\partial_{x_2} A(t,x), y\cdot \partial_{x_3} A(t,x)\Big).
		\end{equation}
		Through this, the derivatives of $\boF$ are the linear operators $\boF'(q):\Hilb\to\R$ given by
		\begin{equation}\label{eq:F'Hilbert}
			\boF'(q)[\varphi]=\int_0^T \left[\dot{\varphi}\cdot A(t,q)+\varphi\cdot\boE(t,q,\dq)\right]dt,\quad\hbox{for all }q,\varphi\in\Hilb.
		\end{equation}
		It is clear that the map $\boF':\Hilb\to \Hilb$, $q\mapsto \boF'(q)$, is well defined and continuous, where each image $\boF'(q)$ is identified with an element of $\Hilb$ via the Riesz representation theorem. Moreover, integrating by parts yields
			\begin{equation}\label{eq:F'-extended}
            \boF'(q)[\varphi]=\int_0^T\varphi\cdot\left(-\frac{d}{dt}(A(t,q(t)))+\boE(t,q,\dq)\right)dt,
            \end{equation}
			so the extension  $\boF'(q):\Ebanach\to\R$ is well-defined as a  continuous linear functional, and 
			 \eqref{eq:F'estimate-LFE} trivially holds. From this, estimate \eqref{hyp:F-dualE} follows directly. Consequently, $\boF:\Hilb\to\R$ satisfies \ref{H2:smooth}. Therefore, $\boI\in\X(\boK)$ for every $A\in\mathcal{C}^1_T(\R^4;\R^3)$.  Finally, bearing in mind \eqref{eq:F'-extended},  \eqref{eq:conv-F'-LFE} follows by standard arguments.
	\end{proof}
	
	The next result identifies a class of potentials for which the action functional \eqref{eq:functional} satisfies $\PS$ with respect to the topology \eqref{def:topology-LFE}.
	
	\begin{lemma}\label{lemma:PS2}
		For every $A\in\mathcal{C}_T^1(\R^4;\R^3)$ satisfying \eqref{cond:A-decay}, the functional \eqref{eq:functional} satisfies $\PS$.
	\end{lemma}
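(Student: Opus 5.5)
The plan is to take a Palais--Smale sequence $\{q_n\}\subset\boK$ at a level $c\neq0$ and extract a $\tau$-convergent subsequence whose limit is a critical point at level $c$. The key point is that $\boK$ controls the derivatives, since $\|\dq_n\|_\infty\le1$, but not the position. Writing $q_n=\bar q_n+\tilde q_n$ with $\bar q_n\in\R^3$ the mean and $\tilde q_n\in\Hilb_2$, we have $\|\tilde q_n\|_\infty\le T$. So the whole difficulty is to bound the means $\bar q_n$.

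\textbf{Step 1 (bounding the means).} Suppose, along a subsequence, that $|\bar q_n|\to\infty$. Then $|q_n(t)|\to\infty$ uniformly in $t$, because $\|\tilde q_n\|_\infty\le T$. By \eqref{cond:A-decay}, this forces $\|\partial_jA(\cdot,q_n)\|_\infty\to0$ for every $j\in\{t,x_1,x_2,x_3\}$.

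I test \eqref{eq:PalaisSmale} with $p=\bar q_n\in\boK$, which gives $p-q_n=-\tilde q_n$ and $\Psi(p)=0$:
\[
-\Psi(q_n)-\boF'(q_n)[\tilde q_n]\ge-\epsilon_n\|\tilde q_n\|_\Hilb .
\]
The right-hand side tends to $0$, since $\|\tilde q_n\|_\Hilb\le C(T)$ on $\boK$. By \eqref{eq:F'estimate-LFE}, $|\boF'(q_n)[\tilde q_n]|\le \mathrm M(q_n)\,T\to0$. Hence $\Psi(q_n)\to0$, and since $\Psi\ge0$ this means $\Psi(q_n)$ converges to $0$.

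It remains to show $\boF(q_n)\to0$. I write
\[
\boF(q_n)=\int_0^T\dq_n\cdot A(t,\bar q_n+\tilde q_n)\,dt .
\]
Replacing $A(t,q_n(t))$ by $A(t,\bar q_n)$ costs at most $\|\nabla A(\cdot,\cdot)\|$ evaluated near $\bar q_n$ times $T$, which tends to $0$. The remaining term is
\[
\int_0^T\dq_n\cdot A(t,\bar q_n)\,dt=-\int_0^T\tilde q_n\cdot\partial_tA(t,\bar q_n)\,dt ,
\]
after integrating by parts using periodicity, and this also tends to $0$ by \eqref{cond:A-decay}. Therefore $\boI(q_n)\to0\neq c$, a contradiction. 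So $\{\bar q_n\}$ is bounded.

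\textbf{Step 2 (extracting a limit).} With the means bounded, $\{q_n\}$ lies in some ball $B_\boK(0,R)$ in $\Ebanach$. This ball is $\tau$-compact by Lemma~\ref{appendix-lemma:tau-domain} (or directly by Arzelà--Ascoli together with Banach--Alaoglu). Hence $q_n\to^\tau q\in\boK$ along a subsequence.

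\textbf{Step 3 (the limit is critical).} Fix $p\in\boK$ and pass to the limit in \eqref{eq:PalaisSmale}. Since $\|p-q_n\|_\Hilb$ is bounded, the right-hand side tends to $0$. The term $\boF'(q_n)[p-q_n]$ converges to $\boF'(q)[p-q]$ by \eqref{eq:conv-F'-LFE} with $\varphi_n=p-q_n\to p-q$ in $\Ebanach$. The term $\Psi(q_n)$ is handled by $\tau$-lower semicontinuity of $\Psi$, i.e. $\Psi(q)\le\liminf\Psi(q_n)$, which goes in the right direction. Together these give \eqref{CriticalPoint}, so $q$ is a critical point.

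\textbf{Step 4 (the level is preserved).} Here the main obstacle is showing $\Psi(q_n)\to\Psi(q)$, not just an inequality. I plan to test \eqref{eq:PalaisSmale} with $p=q$:
\[
\Psi(q_n)\le\Psi(q)+\boF'(q_n)[q-q_n]+\epsilon_n\|q-q_n\|_\Hilb .
\]
The middle term tends to $0$ by \eqref{eq:F'estimate-LFE}, because $\|q-q_n\|_\infty\to0$ and $\mathrm M(q_n)$ stays bounded on a bounded set. This gives $\limsup\Psi(q_n)\le\Psi(q)$, which combined with lower semicontinuity yields $\Psi(q_n)\to\Psi(q)$.

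For $\boF$, I integrate by parts:
\[
\boF(q_n)=\int_0^T\dq_n\cdot A(t,q_n)\,dt .
\]
Since $A(t,q_n)\to A(t,q)$ uniformly and $\dq_n\rightharpoonup^*\dq$, we get $\boF(q_n)\to\boF(q)$. Hence $\boI(q)=c$. I expect the delicate part to be the uniform estimates in Step 1, in particular the integration-by-parts bound for $\boF(q_n)$ when the mean escapes to infinity.
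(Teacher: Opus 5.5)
Your proof is correct and follows essentially the same route as the paper. You use $\tau$-compactness of $\Ebanach$-balls, test \eqref{eq:PalaisSmale} with $p=q$ to recover the level, and test with $p=\bar q_n$ to rule out escaping means. The only real difference is that you derive the vanishing of $\boF(q_n)$ (freezing at $\bar q_n$ and integrating by parts in time) and of $\boF'(q_n)[\tilde q_n]$ (via \eqref{eq:F'estimate-LFE}) directly from \eqref{cond:A-decay}, whereas the paper cites \cite[Lemma 3.4]{GLM} and its analogue \eqref{statement:vanishingE} and then concludes from the two sign inequalities $c\geq 0$ and $-c\geq 0$.
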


	\begin{proof}
		Let $\{q_n\}\subset\boK$ be a Palais--Smale sequence with level $c\in\R$, bounded in $\Ebanach$. Since $\boK$ is a $\tau$-domain, there exists $q\in\boK$ such that, up to a subsequence, $q_n\to^\tau q$. Moreover, $\boI|_\boK$ is lower semicontinuous with respect to $\tau$, hence
		\begin{equation}
			\boI(q)\leq\liminf_{n\to\infty}\boI(q_n)=c.
		\end{equation}
		To prove the reverse inequality, observe first that, since $\{q_n\}$ is bounded in $\Ebanach$ and $q_n,q\in\boK$, it follows
			\[\sup_{n}\|q-q_n\|^2_\Hilb\leq T(2+\sup_n\|q-q_n\|_\infty^2)<\infty.\]
			Thus, taking $p=q$ in \eqref{eq:PalaisSmale} and passing to the limit superior as $n\to\infty$, using \eqref{eq:F'estimate-LFE}, we obtain 
			\begin{equation}
				\Psi(q)\geq\limsup_{n\to\infty}\Psi(q_n).
			\end{equation}
			This, combined with the lower semicontinuity of $\Psi$ in $\tau$, implies that $\Psi(q_n)\to\Psi(q)$ as $n\to\infty$. Therefore, since $\boF|_\boK$ is continuous with respect to $\tau$, we conclude
			\begin{equation*}
				\boI(q)=\lim_{n\to\infty}\boI(q_n)=c.
			\end{equation*}
		Finally, passing to the limit by \eqref{eq:conv-F'-LFE} in \eqref{eq:PalaisSmale} for arbitrary $p\in\boK$, we deduce that $q$ is a critical point of $\boI$ at level $c$. In particular, $q$ is non-constant when $c\neq0$, since $\boI|_{\R^3}\equiv0$.
		
		Hence, the result follows once we prove that every Palais--Smale sequence at a nonzero level is bounded in $\Ebanach$. To this end, \cite[Lemma 3.4]{GLM} states that every $A\in\mathcal{C}_T^1(\R^4;\R^3)$ satisfying \eqref{cond:A-decay} is such that
		\begin{equation}\label{statement:vanishingA}
			\lim_{n\to\infty}\int_0^T \dq_n\cdot A(t,q_n)dt= 0,\quad\hbox{for all }\{q_n\}\subset\boK\hbox{ with }\lim_{n\to\infty}|\bar{q}_n|=\infty,
		\end{equation}
		where $\bar{q}=T^{-1}\int_0^Tq(t)dt$. Analogously to \cite[Lemma 3.4]{GLM}, it is straightforward to show that, for every $A\in\mathcal{C}_T^1(\R^4;\R^3)$ satisfying \eqref{cond:A-decay}, the following holds:
		\begin{equation}\label{statement:vanishingE}
			\lim_{n\to\infty}\int_0^T  \tilde q_n\cdot\boE(t,q_n,\dq_n) dt= 0,\quad\hbox{for all }\{q_n\}\subset\boK\hbox{ with }\lim_{n\to\infty}|\bar{q}_n|=\infty,
		\end{equation}
		where $\boE$ is defined in \eqref{def:epsF'} and $\tilde q:=q-\bar{q}$, for all $q\in\Hilb$.
		
		Now let $\{q_n\}\subset\boK$ be a Palais–Smale sequence with level $c\not=0$, let us assume it is unbounded in the $\Ebanach$ norm, and let us extract a not relabeled subsequence such that $|\bar q_n|\to\infty$ as $n\to\infty$. On the one hand, since
        \begin{equation}\label{ineq:functional-force}
				\boI(q_n)\geq\int_0^T \dq_n\cdot  A(t,q_n)dt,
			\end{equation}
		passing to the limits as $n\to\infty$, together with \eqref{statement:vanishingA}, we obtain $c\geq 0$. 
		
		On the other hand, since $\Psi|_{\R^3}\equiv0$, and using the expression for $\boF'$ given in \eqref{eq:F'Hilbert}, the inequality \eqref{eq:PalaisSmale} with $p=\bar q_n$ becomes
		\begin{equation}
			-\Psi(q_n) - \int_0^T \dq_n\cdot A(t,q_n) dt - \int_0^T \tilde q_n\cdot\boE(t,q_n,\dq_n) dt=-\boI(q_n) - \int_0^T \tilde q_n\cdot\boE(t,q_n,\dq_n) dt \geq -\epsilon_n\|\tilde q_n\|_{\Hilb}.
		\end{equation}
		Moreover, bearing in mind that $\|\tilde q\|_\infty\leq\sqrt{3}T$ for all $q\in\boK$, we obtain
		\[\sup_{n}\|\tilde q_n\|^2_\Hilb\leq T(2+\sup_n\|\tilde q_n\|_\infty^2)\leq T(2+3T^2),\]
			and, as a consequence,
			\begin{equation*}
				-\boI(q_n) - \int_0^T \tilde q_n\cdot\boE(t,q_n,\dq_n)dt \geq -\epsilon_n\sqrt{T(2+3T^2)},\quad\hbox{for all }n\in\mathbb{N}.
			\end{equation*}
		Finally, passing to the limit in the previous inequality, and applying \eqref{statement:vanishingE}, yields $-c\geq 0$. Therefore $c=0$ necessarily, contradicting the assumption  $c\neq0$. Hence $\{q_n\}$ is bounded in $\Ebanach$, which completes the proof.
	\end{proof}

	\begin{remark}\label{Remark:PS-0}
		This result is analogous to \cite[Lemma 3.5]{AS}, which also applies to singular scalar potentials (although our argument here is also valid for the pairs $(\Phi,A)$ considered in \cite{GLM}, allowing for the presence of singularities). The novelty of Lemma~\ref{lemma:PS2} lies in the fact that it does not require any asymptotic behavior of the vector potential $A(t,x)$ at infinity, but only of its derivatives. We also stress that the condition $c\neq0$ in Lemma~\ref{lemma:PS2} cannot be removed in general. To see this, consider any sequence $\{q_n\}\subset\R^3$. In that case, it is immediate that $\boI(q_n)=0$ for all $n\in\mathbb{N}$, while Equation \eqref{eq:F'Hilbert} reduces to
		\begin{equation}
			\boF'(q_n)[\varphi]=\int_0^T \varphi\cdot\partial_t A(t,q_n)\, dt,\quad\hbox{for all }\varphi\in\Hilb.
		\end{equation}
		Assuming that $|q_n|\to\infty$ as $n\to\infty$, by \eqref{cond:A-decay} it follows that
		$\boF'(q_n)[\varphi]\to 0$ as $n\to\infty$, for all $\varphi\in\Hilb$. It is clear that $\{q_n\}$ satisfies inequality \eqref{eq:PalaisSmale}, hence it is an unbounded Palais–Smale sequence for the functional \eqref{eq:functional} at level zero.
	\end{remark}
	\begin{remark}
		Following the proof Lemma~\ref{lemma:PS2}, one infers that the extra conditions
		\begin{equation}\label{hyp:decay}
			 \lim_{n\to\infty}\boI(q_n)=0,\quad\text{for every Palais--Smale sequence }{q_n}\subset\boK\text{ with }\lim_{n\to\infty}\|q_n\|_{\Ebanach}=\infty,
		\end{equation}
		and	\begin{equation}\label{hyp:FContinuousTau}
			\hbox{$\boF'|_\boK:\boK\to \Ebanach^*$ is continuous in $\tau$},
		\end{equation}
		are sufficient for $\PS$ to hold even in the abstract setting of section~\ref{Sect:FunctionalFramework}.
	\end{remark}
	Finally, we provide sufficient conditions on the vector potential under which hypothesis \eqref{hyp:convex} holds, allowing the regularization procedure described in Section~\ref{Sect:eke-las} to apply to the action functional \eqref{eq:functional}.
	\begin{lemma}\label{lemma:FunctionalC2}
		Let $A\in\mathcal{C}_T^2(\R^4;\R^3)$ satisfy \eqref{cond:A-decay} and \eqref{hyp:bound-secondderivatives}. Then, $\boF:\Hilb\to\R$ is of class $\boC^2$ and there exists a constant $C>0$  such that
		\begin{equation}\label{eq:F''-estimate}
			|\boF''(q)[\varphi,\varphi]|\leq C\|\varphi\|_{\Hilb}^2,\quad\text{for all }(q,\varphi)\in \boK\times\Hilb.
		\end{equation}
		In particular,  \eqref{hyp:convex} holds for every $\mu\geq C/2$.
	\end{lemma}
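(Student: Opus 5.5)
We first compute the second derivative of $\boF(q)=\int_0^T \dq\cdot A(t,q)\,dt$ on $\Hilb$. Differentiating the expression \eqref{eq:F'Hilbert} once more in the direction $\varphi$ gives
\begin{equation*}
\boF''(q)[\varphi,\varphi]=\int_0^T\Big(2\,\dot\varphi\cdot\big(\nabla A(t,q)\,\varphi\big)+\sum_{k,j=1}^3 \dq\cdot\partial^2_{x_k,x_j}A(t,q)\,\varphi_k\varphi_j\Big)dt .
\end{equation*}
Here $\nabla A(t,q)\varphi$ denotes the vector with components $\sum_k\partial_{x_k}A_i(t,q)\varphi_k$. We will justify that this formula gives a continuous bilinear form depending continuously on $q\in\Hilb$, so that $\boF\in\boC^2$. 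This is standard once we use two facts: $A\in\boC^2_T$, and the embedding $\Hilb\subset\Ebanach=L^\infty$, which makes $q$ bounded. On bounded sets of $\Hilb$ the relevant derivatives of $A$ are therefore uniformly continuous on the compact range $[0,T]\times\overline{B}_R$. Continuity of $q\mapsto\boF''(q)$ in operator norm then follows by dominated convergence applied to the differences $\partial A(t,q_n)-\partial A(t,q)$ and $\partial^2A(t,q_n)-\partial^2A(t,q)$ in $L^\infty$, combined with $\dq_n\to\dq$ in $L^2$.

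For the estimate \eqref{eq:F''-estimate} we restrict to $q\in\boK$, so that $|\dq|\le1$. By \eqref{hyp:bound-secondderivatives} there is $M:=\sup_t\|\nabla A(t,\cdot)\|_{W^{1,\infty}}<\infty$, which bounds both $|\nabla A|$ and $|\partial^2_{x_k,x_j}A|$ uniformly. The first term is then controlled by $2M\|\dot\varphi\|_2\|\varphi\|_2\le M\|\varphi\|_\Hilb^2$. The second term is controlled by $9M\int_0^T|\varphi|^2\le 9M\|\varphi\|_\Hilb^2$. Notably, no $L^\infty$ bound on $\varphi$ is needed, because $\dq$ is bounded. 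Hence $C=10M$ works. Assumption \eqref{cond:A-decay} is not really needed here beyond guaranteeing the finiteness of $M$, which \eqref{hyp:bound-secondderivatives} already provides.

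Finally we deduce \eqref{hyp:convex}, and this is the delicate point. The bound holds only for $q\in\boK$, while $\boI=+\infty$ outside $\boK$. Fix $p,q\in\boK$ and set $q_s=q+s(p-q)$, which lies in $\boK$ by convexity. Consider $h(s)=\boF(q_s)+\mu\|q_s\|_\Hilb^2$. Then
\begin{equation*}
h''(s)=\boF''(q_s)[p-q,p-q]+2\mu\|p-q\|_\Hilb^2\ge(2\mu-C)\|p-q\|_\Hilb^2\ge0 .
\end{equation*}
So $\boF+\mu\|\cdot\|_\Hilb^2$ is convex along every segment in $\boK$ for $\mu\ge C/2$. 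Adding the convex functional $\Psi$, which equals $+\infty$ off the convex set $\boK$, gives convexity of $\boI+\mu\|\cdot\|^2_\Hilb$ on all of $\Hilb$. The convexity inequality is trivial whenever an endpoint lies outside $\boK$.

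The main obstacle is the rigorous $\boC^2$ verification on all of $\Hilb$, specifically the continuity of $\boF''$ in operator norm. The term $\dq\,\partial^2A(t,q)\varphi\varphi$ requires $L^2$ convergence of $\dq_n$ together with uniform convergence of $\partial^2A(t,q_n)$. We would handle it by using the $L^\infty$ embedding and uniform continuity of $\partial^2 A$ on compact sets.
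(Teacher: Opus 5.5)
Your proposal is correct and follows essentially the same route as the paper: compute $\boF''$, bound it on $\boK$ using $|\dq|\le 1$ and the uniform bounds on the first and second spatial derivatives of $A$ from \eqref{hyp:bound-secondderivatives}, then obtain convexity of $\boI+\mu\|\cdot\|_\Hilb^2$ along segments in the convex set $\boK$ (your computation of $h''(s)$ is the Ekeland--Lasry argument the paper cites) and extend trivially off $\boK$. One point to tighten: for continuity of $q\mapsto\boF''(q)$ on all of $\Hilb$, the term involving $\dq_n-\dq$ requires $\|\varphi\|_\infty\le c_0\|\varphi\|_\Hilb$ and not merely boundedness of $\dq$, and the one-dimensional embedding supplies this.
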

	\begin{proof}
		It is straightforward to verify that
		\begin{align}
			\boF''(q)[\psi,\varphi]=\sum_{j,k}&\int_0^T\left[\dot\varphi_j\psi_k\partial_{x_k}A_j(t,q)+\varphi_j\dot\psi_k\partial_{x_j}A_k(t,q) +\sum_{l}\int_0^T\varphi_j\dq_k\psi_l\partial^2_{x_jx_l}A_k(t,q)\right]dt,
		\end{align}
		for all $q\in\boK$, and $\varphi,\psi\in\Hilb$.
		Note that by \eqref{cond:A-decay} and \eqref{hyp:bound-secondderivatives}, these derivatives of the vector potential are uniformly bounded in $\R^4$. Hence, denoting by $\mathcal{B}(\Hilb)$ the space of bilinear maps from $\Hilb\times\Hilb$ to $\R$, arguments analogous to those in the proof of Lemma~\ref{Lemma:Set-X-LFE} show that $\boF'':\Hilb\to\mathcal{B}(\Hilb)$ is well defined and continuous. Moreover, using H\"older inequality, \eqref{cond:A-decay}, and \eqref{hyp:bound-secondderivatives}, the estimate \eqref{eq:F''-estimate} follows directly. As a consequence, for all $\mu\geq C$, it follows that
		\begin{equation}
			\boF''(q)+\mu\,\mathrm{Id}:\Hilb\times\Hilb\to\R \text{ is positive definite for all } q\in\boK,
		\end{equation}
		where $\mathrm{Id}[\psi,\varphi]:=\langle\psi,\varphi\rangle_\Hilb$. Using that $\boK$ is convex, it is straightforward to adapt the arguments in \cite[p.~307]{Eke-Las} to derive that $\boI+\mu\|\cdot\|_\Hilb^2$ is convex on $\boK$ for every $\mu\geq C/2$. Moreover, since $\boI(q)=\infty$ for $q\notin\boK$, the functional $\boI+\mu\|\cdot\|_\Hilb^2$ extends trivially as a convex functional on $\Hilb\setminus\boK$. Hence, hypothesis \eqref{hyp:convex} holds for all $\mu\geq C/2$, which completes the proof.
	\end{proof}

    \subsection{Multiplicity of critical points }\label{Sect-LFE-Final}
	
	In this section we prove Theorem~\ref{MainTheorem:LFE} by combining the variational framework developed in \cite{GLM} with the abstract multiplicity result established in Theorem~\ref{MainTheorem}.
	
	 The variational formulation of the Lorentz force equation is typically established on the space $\boW$ of $T$-periodic Lipschitz functions. In that setting, \cite[Theorem 6]{ABT} characterizes the periodic solutions of \eqref{eq:LFE-intro} as critical points of \eqref{eq:functional}. Using the compact embedding $\boW\subset\Hilb$, this characterization extends with the same proof to the Hilbert space framework from Section~\ref{Sect:LFE-Functional}. The result can be stated as follows:
	\begin{theorem}\label{theorem:CriticalPoint}
		A function $q\in\boK$ is a solution of \eqref{eq:LFE-intro} if, and only if, $q$ is a critical point of \eqref{eq:functional}.
	\end{theorem}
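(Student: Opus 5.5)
The plan is to transfer \cite[Theorem 6]{ABT} from the Lipschitz space $\boW$ to $\Hilb$. The key observation is that the effective domain $\boK$ of \eqref{eq:functional} consists exactly of $T$-periodic functions with $\|\dq\|_\infty\le1$. These are Lipschitz, so $\boK\subset\boW$ and the relevant sets coincide. Since criticality in Definition~\ref{def:CriticalPoint} only involves test points $p\in\boK$, the Hilbert and Lipschitz notions of critical point should agree. It then remains to check that the ingredients of \eqref{CriticalPoint} are the same objects in both settings. $\Psi$ is the same function on $\boK$. For $\boF'(q)[p-q]$ with $p,q\in\boK$, formula \eqref{eq:F'Hilbert} coincides with the derivative of $\boF$ computed on $\boW$, because $p-q\in\boW\subset\Hilb$ and the Gateaux derivative along a fixed direction does not depend on the ambient norm. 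So $q\in\boK$ satisfies \eqref{CriticalPoint} in $\Hilb$ iff it satisfies the variational inequality of \cite{ABT} in $\boW$.

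For a self-contained argument I would follow the structure of \cite[Theorem 6]{ABT}.

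\emph{($\Leftarrow$)} Let $q$ solve \eqref{eq:LFE-intro}; in particular $|\dq|<1$ and $\frac{\dq}{\sqrt{1-|\dq|^2}}$ is $C^1$. Then $\Psi$ is differentiable at $q$ along directions in $\boK-q$. Convexity gives $\Psi(p)-\Psi(q)\ge\int_0^T\frac{\dq\cdot(\dot p-\dq)}{\sqrt{1-|\dq|^2}}dt$. Integrate by parts, using periodicity and \eqref{eq:F'-extended}. The equation, with $E=-\partial_tA$ and $B=\nabla\times A$, is exactly the statement that $\frac{d}{dt}\frac{\dq}{\sqrt{1-|\dq|^2}}=-\frac{d}{dt}A(t,q)+\boE(t,q,\dq)$, up to the vector identity relating $\boE$ and $\nabla A$ to $\dq\times B$. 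Hence the sum in \eqref{CriticalPoint} is nonnegative.

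\emph{($\Rightarrow$)} Let $q$ satisfy \eqref{CriticalPoint}. By \eqref{eq:F'-extended}, $\boF'(q)$ is represented by the $L^1$ (indeed bounded) function $f:=-\frac{d}{dt}A(t,q)+\boE(t,q,\dq)$. The inequality then says that $q$ minimizes the convex functional $p\mapsto\Psi(p)+\int_0^T f\cdot p\,dt$ on $\boK$. The conclusion would follow from the Brezis--Mawhin type result used in \cite{ABT}: a minimizer of such a functional with $L^1$ forcing satisfies $|\dq|<1$ a.e. (indeed $\|\dq\|_\infty<1$), and solves $\frac{d}{dt}\frac{\dq}{\sqrt{1-|\dq|^2}}=f$ classically. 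Since $f$ is continuous once $q$ is $C^1$, a bootstrap gives $q\in C^2$, and the vector identity recovers \eqref{eq:LFE-intro}.

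The main obstacle is the step $\|\dq\|_\infty<1$ for the minimizer, i.e.\ ruling out touching the light cone. I intend to cite this from \cite{ABT} (via Brezis--Mawhin) rather than reprove it. The only thing to check is that it is insensitive to working in $\Hilb$ instead of $\boW$, which holds because the minimization takes place over the same set $\boK$.
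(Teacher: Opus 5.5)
Your proposal is correct and follows essentially the paper's route. The paper simply invokes \cite[Theorem 6]{ABT} and notes that, through $\boW\subset\Hilb$, the same proof carries over to $\Hilb$. Your observations make that transfer explicit: the effective domain $\boK$ coincides with the one in \cite{ABT}, test points outside $\boK$ are irrelevant, and $\boF'(q)[p-q]$ is the same directional derivative in both settings. Your sketch of the two implications (the convexity inequality plus the vector identity for one direction, and the Brezis--Mawhin minimization lemma for the other) reproduces the structure of the argument in \cite{ABT}.
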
 
	
	Following the approach in \cite{GLM}, we denote by $\boA_0$ the set of vector potentials satisfying the hypotheses of Theorem~\ref{MainTheorem:LFE}, namely
	\begin{equation}\label{def:FamilyA0}
		\boA_0:=\{A\in\boC_T^2(\R^4;\R^3) \hbox{ satisfying $\partial_t A\not\equiv 0$, \eqref{cond:A-decay}, \eqref{hyp:bound-secondderivatives}, and \eqref{def:classA0}}\}.
	\end{equation}
	In particular, for every $A\in\boA_0$, the action functional \eqref{eq:functional} admits a global minimizer at a negative level, since $\boA_0$ is contained in the class $\boA$ introduced in \cite[Theorem 2.1]{GLM}.
	
	\begin{definition}
		The set of \textsl{normalized potentials} in $\boA_0$ is defined by
		\begin{equation}
			\boA_0^N:=\{A(t,x+x_0)-A(t,x_0):\ A\in\boA_0, \hbox{ and
			}  x_0\in\Upsilon(A)\}.
		\end{equation}
	\end{definition}
	\begin{remark}\label{rem:equivalenceCP}
		Equivalently, $\boA_0^N$ consists of those potentials $A'\in\boA_0$ for which the origin is an isolated equilibrium ($0\in\Upsilon(A')$), condition \eqref{def:classA0} is satisfied at $0$, and $A'(\cdot,0)\equiv0$. We emphasize that the number of critical points of \eqref{eq:functional} is invariant under the normalization procedure defining $\boA_0^N$. Indeed, given $A\in\boA_0$, let $A'\in\boA_0^N$ be an associated normalized potential, namely
		\begin{equation}
			A'(t,x)=A(t,x+x_0)-A(t,x_0),\quad\hbox{for some }x_0\in\Upsilon(A).
		\end{equation}
		Then, it is straightforward to verify that $q\in\boK$ solves \eqref{eq:LFE-intro} for $A$ if, and only if, $q-x_0$ solves \eqref{eq:LFE-intro} for $A'$. Consequently, the functionals \eqref{eq:functional} associated with $A$ and $A'$ have the same number of critical points, including both constant and nontrivial ones.
	\end{remark}
	
	Given $A\in\boA_0^N$, by the continuity of $\nabla A$, together with \eqref{def:classA0} at $x_0=0$, there exists $r>0$ such that
	\begin{equation}\label{def:r}
		\|\nabla A(\cdot,x)||_{\infty}<\frac{\pi}{2\sqrt{3}T},\ \hbox{ for all }|x|\leq c_0 r,
	\end{equation}
	where $c_0$ is the constant in \eqref{eq:embedding}. Then, defining
    $$
	\ell:=2\sqrt{3}\max_{|x|\leq c_0r}\|\nabla A(\cdot,x)||_{\infty}<\frac{\pi}{T},$$
    and using that $A(\cdot,0)\equiv0$ together with the mean value theorem, we obtain 
	\begin{equation}\label{hyp:linear}
		|A(t,x)|\leq \frac{\ell}{2}|x|,\quad\text{for all }t\in[0,T] \hbox{ and all }|x|\leq c_0r.
	\end{equation}
	
	Recalling that $\boI|_{\R^3}\equiv0$ trivially, for all $A\in\mathcal{C}^1_T(\R^4;\R^3)$, the next result establishes the local linking \eqref{hyp:Splitting} for the normalized potentials. 
	
	\begin{lemma}\label{lemma:splitting}
		For any $A\in\boA_0^N$, the functional $\boI$ defined in \eqref{eq:functional} satisfies 
		\begin{equation}\label{statement:X}
			\boI(q)\geq C \|q\|_\Hilb^2, \quad\text{for every }q\in\Hilb_2\text{ with }\|q\|_\Hilb\in(0,r],
		\end{equation}
		where $C:=\dfrac{\pi(\pi-T\ell)}{2(\pi^2+T^2)}$, and the constants $r$, $\ell$ are given in \eqref{hyp:linear}.
	\end{lemma}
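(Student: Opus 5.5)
The plan is to bound $\Psi$ from below and $|\boF|$ from above on $\boK\cap\Hilb_2$, both in terms of $\|\dq\|_2^2$, and then convert to $\|q\|_\Hilb^2$ using a Wirtinger-type inequality for zero-mean periodic functions.

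First, for $q\in\Hilb_2\setminus\boK$ we have $\boI(q)=\infty$ and there is nothing to prove. So fix $q\in\Hilb_2\cap\boK$ with $\|q\|_\Hilb\in(0,r]$.

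For the kinetic part I use the elementary inequality $1-\sqrt{1-s}\geq s/2$ for $s\in[0,1]$, applied pointwise with $s=|\dq|^2\le 1$. This gives
\begin{equation*}
\Psi(q)\geq \tfrac12\|\dq\|_2^2 .
\end{equation*}

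For the potential part I use the embedding \eqref{eq:embedding}, which gives $\|q\|_\infty\le c_0\|q\|_\Hilb\le c_0 r$. Hence \eqref{hyp:linear} applies pointwise along $q$ and yields $|A(t,q(t))|\le \tfrac{\ell}{2}|q(t)|$. By Cauchy--Schwarz,
\begin{equation*}
|\boF(q)|\le \tfrac{\ell}{2}\|\dq\|_2\|q\|_2 .
\end{equation*}

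Since $q$ has zero mean, I invoke Wirtinger's inequality in the (non-sharp but sufficient) form $\|q\|_2\le \tfrac{T}{\pi}\|\dq\|_2$. This gives $|\boF(q)|\le \tfrac{\ell T}{2\pi}\|\dq\|_2^2$, and therefore
\begin{equation*}
\boI(q)\geq \frac{\pi-\ell T}{2\pi}\,\|\dq\|_2^2 .
\end{equation*}
The right-hand side is nonnegative because $\ell<\pi/T$.

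Finally, the same Wirtinger inequality gives
\begin{equation*}
\|q\|_\Hilb^2=\|q\|_2^2+\|\dq\|_2^2\le \Big(1+\frac{T^2}{\pi^2}\Big)\|\dq\|_2^2 ,
\end{equation*}
that is, $\|\dq\|_2^2\ge \frac{\pi^2}{\pi^2+T^2}\|q\|_\Hilb^2$. Combining the two displays,
\begin{equation*}
\boI(q)\ge \frac{\pi-\ell T}{2\pi}\cdot\frac{\pi^2}{\pi^2+T^2}\,\|q\|_\Hilb^2=C\|q\|_\Hilb^2 ,
\end{equation*}
which is exactly the constant in the statement. Using the sharp Wirtinger constant $T/(2\pi)$ would give an even better constant.

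There is no serious obstacle in this argument. The only point needing care is that \eqref{hyp:linear} holds only for $|x|\le c_0r$. This is exactly why the radius $r$ in \eqref{def:r} was chosen with the factor $c_0$, so that the sup-norm control coming from \eqref{eq:embedding} keeps the trajectory $q(t)$ inside the region where the linear bound on $A$ is available.
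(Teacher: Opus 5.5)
Your proposal is correct and follows essentially the same argument as the paper. Both use the bound $\Psi(q)\geq\frac12\|\dq\|_2^2$, Cauchy--Schwarz with \eqref{hyp:linear} (available because $\|q\|_\infty\leq c_0 r$) to control $\boF$, and the Poincar\'e--Wirtinger inequality $\pi\|q\|_2\leq T\|\dq\|_2$ twice, once to absorb the potential term and once to compare $\|\dq\|_2^2$ with $\|q\|_\Hilb^2$, which yields exactly the constant $C$.
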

	\begin{proof}
		Let $A\in\mathcal{A}_0^N$ be fixed.
		Observe that \eqref{statement:X} is trivially satisfied when $q\notin\boK$, hence it suffices to consider the case $q\in\boK\cap\Hilb_2$ with $\|q\|_\Hilb\in(0,r]$, so that $\|q\|_\infty\in (0,c_0r]$. Since $\Psi(q)\geq \frac12\|\dq\|_2^2$ for all $q\in\boK$, by Hölder’s inequality and \eqref{hyp:linear} we obtain
		\begin{align}
			\boI(q)\geq\frac12\|\dq\|_2^2 - \|A(t,q)\|_2 \|\dq\|_2\geq \frac12\|\dq\|_2\left(\|\dq\|_2 - \ell \|q\|_2\right),\quad\hbox{for all }q\in\boK\hbox{ with }\|q\|_\Hilb\in(0,r].
		\end{align}
		Additionally, the Poincaré-Wirtinger inequality states that $\pi    \|q\|_2\leq T \|\dq\|_2$ for all $q\in \Hilb_2$,
		which implies that
		$\|q\|^2_\Hilb\leq(T^2\pi^{-2}+1) \|\dq\|^2_2$, for all $q\in\Hilb_2$. Therefore, 
		\begin{align*}
			\boI(q)&\geq \frac12\left(1-\frac{\ell T}{\pi}\right)\|\dq\|_2^2\geq C \|q\|_\Hilb^2 ,\quad\hbox{for all }q\in\Hilb_2\cap\boK\hbox{ with }\|q\|_\Hilb\in(0,r],
		\end{align*} 
		which completes the proof. 
	\end{proof}
	We are now in a position to prove the main result of this section. The proof follows from a direct application of Theorem~\ref{MainTheorem} together with the lemmas established above.

\begin{proof}[Proof of Theorem~\ref{MainTheorem:LFE}]
		By Remark~\ref{rem:equivalenceCP}, it suffices to prove the result in the class $\mathcal{A}^N_0$. 
        
        Let us fix $A\in\mathcal{A}^N_0$. Then, by \cite[Theorem 2.1]{GLM} there exists $\minq \in \boK\setminus\R^3$ such that $\min_{\Hilb}\boI=\boI(\minq)<0$. Moreover, Lemmas~\ref{Lemma:Set-X-LFE},  \ref{lemma:PS2}, \ref{lemma:FunctionalC2}, \ref{lemma:splitting} guarantee that the hypotheses of Theorem~\ref{MainTheorem} are satisfied. Therefore, there exists a second critical point $q\in\boK\setminus\R^3$ at a nonzero level and, by Theorem \ref{theorem:CriticalPoint}, $q$ is a periodic solution of \eqref{eq:LFE-intro}.
        \end{proof}

\section{The mean curvature operator in Minkowski spaces}
\label{Section:Minkowski}

In this section we analyze the Dirichlet problem \eqref{eq:PDE-Minkowski} for a bounded domain
 $\Omega\subset\mathbb{R}^N$ ($N\geq 1$)  with boundary
$\partial\Omega$ of class $\mathcal{C}^2$, and $f\colon\Omega\times\mathbb{R}\to\mathbb{R}$ a given function. For a pointwise solution to~\eqref{eq:PDE-Minkowski} to be well defined, its
gradient must remain uniformly bounded away from the singularity of the mean
curvature operator.
To this end, we adopt the following notion of solutions:

\begin{definition}\label{def:sol-PDE}
A \emph{solution} of~\eqref{eq:PDE-Minkowski} is a function
$q \in W^{2,p}(\Omega)$, for some $p > d$, satisfying
$\|\nabla q\|_{\infty} < 1$ and $q|_{\partial\Omega}=0$, such that
\eqref{eq:PDE-Minkowski} holds almost everywhere in~$\Omega$.
\end{definition}

Our starting point is the minimization result of~\cite{BerJebMaw}.
Specifically, that article assumes that $f$ is a Carath\'{e}odory function,
meaning
\begin{equation}\label{hyp:Cath}
  f(x,\cdot)\colon\mathbb{R}\to\mathbb{R}
    \ \text{ is continuous for a.e.\ } x\in\Omega,
  \qquad
  f(\cdot,s)\colon\Omega\to\mathbb{R}
    \ \text{ is measurable for all } s\in\mathbb{R}.
\end{equation}
In addition, for every $\rho>0$, there exists $\alpha_\rho\in L^\infty(\Omega)$,
$\alpha_\rho > 0$, such that
\begin{equation}\label{hyp:growth-condition}
  |f(x,s)|\leq \alpha_\rho(x)
  \quad\text{for all } s\in[-\rho,\rho]
  \text{ and for a.e. } x\in\Omega.
\end{equation}
Under hypotheses~\eqref{hyp:Cath} and~\eqref{hyp:growth-condition},
\cite[Theorem~2.1]{BerJebMaw} asserts that the Dirichlet
problem~\eqref{eq:PDE-Minkowski} admits at least one solution, which is a
global minimizer of the associated action functional.
As observed in~\cite[Theorem~3.1]{BerJebMaw}, an additional condition guaranteeing that the
functional attains negative values ensures that such minimizers are nontrivial.

In the present section we consider a broad family of nonlinearities within the
framework of~\cite{BerJebMaw} for which Theorem~\ref{MainTheorem} yields a
second nontrivial solution of~\eqref{eq:PDE-Minkowski}.
More precisely, we establish the following result.

\begin{theorem}\label{MainTheorem-Dirichlet}
Let $f\colon\Omega\times\mathbb{R}\to\mathbb{R}$ be of the form
\begin{equation}\label{hyp:PDE-f0-multiplicity}
  f(x,s)=a(x)s - g(s),
\end{equation}
where $a\in L^\infty(\Omega)$ and $g\in\mathcal{C}^1(\mathbb{R})$ satisfies
\begin{equation}\label{hyp:PDE-f1-multiplicity}
  g(0)=g'(0)=0.
\end{equation}
Suppose moreover that the operator $-\Delta+a$ on $H_0^1(\Omega)$ has a
negative first eigenvalue and no zero eigenvalue.
Then \eqref{eq:PDE-Minkowski} admits at least two nontrivial solutions.
\end{theorem}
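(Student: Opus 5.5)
We plan to obtain Theorem~\ref{MainTheorem-Dirichlet} from Theorem~\ref{MainTheorem}, in the same way as Theorem~\ref{MainTheorem:LFE}. The setting is
\[
\Hilb=H_0^1(\Omega),\qquad \Ebanach=L^\infty(\Omega)\ (\text{or } \boC(\overline\Omega)),\qquad \boK=\{q\in H_0^1(\Omega):\ \|\nabla q\|_\infty\le1\},
\]
with $\tau$ the topology \eqref{eq:topology-K}. Since $\Omega$ is bounded and $q|_{\partial\Omega}=0$, every $q\in\boK$ satisfies $\|q\|_\infty\le \operatorname{diam}\Omega$. Hence $\boK$ is bounded in $\Ebanach$, and Ascoli--Arzel\`a together with Lemma~\ref{appendix-lemma:tau-domain} should make $\boK$ a $\tau$-domain. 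The functional is
\[
\Psi(q)=\int_\Omega\bigl(1-\sqrt{1-|\nabla q|^2}\bigr)\,dx \ \text{ on } \boK\ (\infty \text{ otherwise}),\qquad \boF(q)=\int_\Omega\Bigl(G(q)-\tfrac12 a q^2\Bigr)dx,
\]
where $G'=g$.

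The growth of $g$ at infinity is unrestricted, so we first truncate it. Replace $g$ by a $\boC^1$ function $\tilde g$ with bounded derivative that coincides with $g$ on $[-R,R]$, where $R>\operatorname{diam}\Omega$. On $\boK$ the two functionals agree, so nothing is lost. With this change $\boF\in\boC^1(\Hilb)$ and $\boF'$ is Lipschitz, and we obtain \ref{H1:nonsmooth}--\ref{H2:smooth}. Lower semicontinuity of $\Psi$ in $\tau$ follows as in \cite{BerJebMaw-Neumann}, and \eqref{hyp:F-dualE} holds because $\boF$ is locally uniformly continuous in $L^\infty$.

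Hypothesis \eqref{hyp:convex} follows from the bound
\[
\boF''(q)[\varphi,\varphi]\ge -\bigl(\|a\|_\infty+\|\tilde g'\|_\infty\bigr)\|\varphi\|_2^2 .
\]
The functional is bounded below because $\boK$ is bounded in $L^\infty$. For $\PS$, let $\{q_n\}\subset\boK$ be a Palais--Smale sequence. It is $\Ebanach$-bounded, so a subsequence converges in $\tau$ to some $q$. We then argue as in Lemma~\ref{lemma:PS2}: test \eqref{eq:PalaisSmale} with $p=q$, where $\|q_n-q\|_\Hilb$ is bounded since $\boK$ is bounded in $\Hilb$. This gives $\Psi(q_n)\to\Psi(q)$, and passing to the limit yields criticality, since $\boF'$ is continuous along uniformly convergent sequences. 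Critical points in the sense \eqref{CriticalPoint} are solutions in the sense of Definition~\ref{def:sol-PDE} by the regularity theory of \cite{BS,BerJebMaw}.

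For the geometry, let $\Hilb_1$ be the span of the eigenfunctions of $-\Delta+a$ with negative eigenvalues; it is finite dimensional and contains the first eigenfunction. Let $\Hilb_2$ be its orthogonal complement. Because $0$ is not an eigenvalue, $Q(q)=\int(|\nabla q|^2+aq^2)$ satisfies
\[
Q\le -\lambda\|q\|^2 \ \text{ on } \Hilb_1,\qquad Q\ge \lambda\|q\|^2 \ \text{ on } \Hilb_2,
\]
for some $\lambda>0$.

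On $\Hilb_1$ we use two facts. First, $\Psi(q)\le \frac12\|\nabla q\|_2^2+C\|\nabla q\|_\infty^2\|\nabla q\|_2^2$. Second, all norms are equivalent on $\Hilb_1$, and the eigenfunctions are Lipschitz by elliptic regularity, given $\partial\Omega\in\boC^2$ and $a\in L^\infty$. The term $\int G(q)$ is $o(\|q\|^2)$ uniformly in $L^\infty$-small $q$, since $g(0)=g'(0)=0$. Together these give $\boI\le -\frac{\lambda}{4}\|q\|^2\le0$ for small $q\in\Hilb_1$. The same estimate shows $\inf\boI<0$.

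The $\Hilb_2$ side is the main obstacle, because elements of $\Hilb_2\cap\boK$ that are small in $H^1$ need not be small in $L^\infty$ when $N\ge2$. We would try the following. Use $\Psi(q)\ge\frac12\|\nabla q\|_2^2$, so that $\boI(q)\ge \frac12 Q(q)-\int G(q)$. Then bound $|G(s)|\le \epsilon s^2$ for $|s|\le\kappa$ and $|G(s)|\le C_R|s|^{m}$ for $\kappa\le|s|\le R$, with $m>2$ Sobolev-subcritical. This gives
\[
\boI(q)\ge\Bigl(\tfrac{\lambda}{2}-\epsilon\Bigr)\|q\|^2-C\|q\|^m ,
\]
so $\boI$ is strictly positive on $\Hilb_2$ near $0$ with $g(s)=c s^2$ for some $c>0$. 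Here boundedness on $\boK$ and the truncation to $|s|\le R$ are essential. When $N\ge 3$ one can instead interpolate, using $\|q\|_\infty\le R$, to get
\[
\int_{|q|>\kappa}|q|^m\le R^{m-2^*}\|q\|_{2^*}^{2^*}\lesssim\|q\|^{2^*} .
\]
Finally, Theorem~\ref{MainTheorem} yields two nonzero critical points, hence two nontrivial solutions.
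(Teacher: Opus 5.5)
Your route is the paper's, with one exception: you truncate using $\|q\|_\infty\le\operatorname{diam}(\Omega)$ on $\boK$, obtain $\PS$ by testing \eqref{eq:PalaisSmale} with $p=q$, derive \eqref{hyp:convex} from a lower bound on $\boF''$, and verify \eqref{hyp:Splitting} through the spectral splitting of $-\Delta+a$, using Lipschitz eigenfunctions on $\Hilb_1$ and $|G(s)|\le\epsilon s^2+C|s|^m$ on $\Hilb_2$. The step that fails as written is the choice $\Ebanach=L^\infty(\Omega)$ (or $\boC(\overline\Omega)$). Theorem~\ref{MainTheorem} is stated under the standing assumption \eqref{eq:embedding}, namely $\|q\|_\Ebanach\le c_0\|q\|_\Hilb$ for \emph{every} $q\in H_0^1(\Omega)$. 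This is false for $N\ge2$, since $H_0^1(\Omega)\not\subset L^\infty(\Omega)$. The failure is not cosmetic. The embedding gives $\|q_n-p_n\|_\Ebanach\to0$ in Lemma~\ref{lemma:abstract-minimizer}, and it gives the lower bound \eqref{eq:gamma*-positive2} in Proposition~\ref{Pro:Gamma}. So you cannot quote Theorem~\ref{MainTheorem} with this $\Ebanach$.

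The repair is the paper's choice $\Ebanach=L^p(\Omega)$, with $p$ as in \eqref{eq:sets-PDE}, and nothing else in your argument needs to change. Extended by zero, elements of $\boK$ are $1$-Lipschitz on $\R^N$. Hence $\|u-w\|_\infty\le C\|u-w\|_2^{2/(N+2)}$ for $u,w\in\boK$, so on $\boK$ convergence in $L^p$ and uniform convergence coincide. Your Ascoli argument, the check of \eqref{hyp:F-dualE}, and the limit passages in $\PS$ therefore carry over verbatim. The same inequality shows that small elements of $\Hilb_2\cap\boK$ \emph{are} small in $L^\infty$, so the $\Hilb_2$ side is less delicate than you feared. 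Your subcritical-power bound, which is exactly the paper's argument, works in any case.

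Two smaller slips. First, with $f=as-g$ one has $F(x,s)=\frac12a(x)s^2-G(s)$, so $\boF(q)=\int_\Omega(\frac12aq^2-G(q))\,dx$. Your displayed $\boF$ has the opposite sign, although your later estimate $\boI(q)\ge\frac12Q(q)-\int G(q)$ uses the correct one. Second, $\Hilb_2$ should be the $L^2$-orthogonal (equivalently $Q$-orthogonal) complement of $\Hilb_1$, as in the paper. The $H_0^1$-orthogonal complement is in general not $Q$-orthogonal to $\Hilb_1$ when $a$ is nonconstant, so $Q\ge\lambda\|q\|_\Hilb^2$ on it would not follow from the spectral decomposition. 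Since Theorem~\ref{MainTheorem} only asks for a direct sum, this choice costs nothing.
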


This theorem is a relativistic counterpart of~\cite[Theorem~6]{Bre-Nir}. Note
that the datum~\eqref{hyp:PDE-f0-multiplicity} trivially
satisfies both~\eqref{hyp:Cath} and~\eqref{hyp:growth-condition}, so \cite[Theorem~2.1]{BerJebMaw} implies the existence of an action minimizer. However, under \eqref{hyp:PDE-f0-multiplicity} and \eqref{hyp:PDE-f1-multiplicity}, zero is trivially a solution to \eqref{eq:PDE-Minkowski}. We will show that the action functional attains negative values (and hence the minimizer is nontrivial) following an argument which, in contrast to  \cite[Theorem~3.1]{BerJebMaw} or \cite[Theorem~6]{Bre-Nir}, does not assume any size condition on $g$.

The proof of Theorem~\ref{MainTheorem-Dirichlet} is given at the end of
Section~\ref{Sect:Mink-Dirichlet}.
In preparation, Section~\ref{Section:Mink-Functional} reformulates
\eqref{eq:PDE-Minkowski} in a variational framework in which the hypotheses of
Theorem~\ref{MainTheorem} are verified.
\subsection{Functional framework}\label{Section:Mink-Functional}
In what follows, we take $\Hilb=H_0^1(\Omega)$ and
\begin{equation}\label{eq:sets-PDE}
\Ebanach:=L^p(\Omega)\quad\hbox{with}\quad \left\{\begin{array}{cl}
         
          p\in(2,\infty)&\hbox{if }                 N\in\{1,2\}, \\
         p=\dfrac{2N}{N-2}&\hbox{if }N\geq3, 
    \end{array}\right.\qquad\boK:=\{u\in H_0^1(\Omega):\ \|\nabla u\|_\infty\leq1\}.
\end{equation}
With these choices, the embedding condition \eqref{eq:embedding} holds trivially. By Lemma~\ref{appendix-lemma:tau-domain} in Appendix, the set $\boK$ is a $\tau$-domain in the topology defined in \eqref{eq:topology-K}. 

The next result yields a universal $L^\infty$ estimate and a compactness property in $\boK$ which are specific to the Dirichlet setting.

\begin{lemma}\label{appendix-lemma:convergence-PDE}
The following estimate holds:
\begin{equation}\label{eq:universal-estimate}
    \|q\|_\infty\leq\operatorname{diam}(\Omega),\quad\text{for all }q\in\boK.
\end{equation}
Moreover, the set $\boK$ is compact in the topology $\tau$ given in \eqref{eq:topology-K}. 
\end{lemma}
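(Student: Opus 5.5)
For the universal estimate \eqref{eq:universal-estimate}, I will use that functions in $\boK$ are $1$-Lipschitz along segments and vanish on $\partial\Omega$. Given $q\in\boK$, extend $q$ by zero outside $\Omega$. The extension $\tilde q$ lies in $H^1(\R^N)$ with $\nabla\tilde q=\nabla q\,\chi_\Omega$, because $q\in H^1_0(\Omega)$. Hence $\|\nabla\tilde q\|_{L^\infty(\R^N)}\le 1$, and $\tilde q$ has a representative that is $1$-Lipschitz on all of $\R^N$ (a $W^{1,\infty}(\R^N)$ function is Lipschitz with constant $\|\nabla\tilde q\|_\infty$, since $\R^N$ is convex). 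For $x\in\Omega$, pick any $z\in\partial\Omega$; then $\tilde q(z)=0$ and $|x-z|\le\operatorname{diam}(\Omega)$. Therefore $|q(x)|=|\tilde q(x)-\tilde q(z)|\le|x-z|\le\operatorname{diam}(\Omega)$. In fact the sharper bound $\operatorname{dist}(x,\partial\Omega)$ holds. The only delicate point is justifying the zero extension with the gradient bound, and this is standard for $H^1_0$.

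For compactness of $\boK$ in $\tau$, I will reduce to property (i) of Definition~\ref{def:domain-K}, which Lemma~\ref{appendix-lemma:tau-domain} guarantees for $\boK$. Since $\Omega$ is bounded, \eqref{eq:universal-estimate} gives
\[
\|q\|_\Ebanach\le |\Omega|^{1/p}\operatorname{diam}(\Omega)=:R,\quad\text{for all } q\in\boK.
\]
In the case $p=\infty$ (with $N=1$ in the prototype) the bound is directly $R=\operatorname{diam}(\Omega)$. As $0\in\boK$, it follows that $\boK=\ball(0,R)$, and this ball is $\tau$-compact by property (i).

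If one prefers not to rely on the appendix lemma, I would argue directly. Take a net or sequence in $\boK$. The gradients are bounded in $L^\infty$, so Banach--Alaoglu gives a weak$^*$ convergent subsequence of gradients. Uniform Lipschitz bounds together with \eqref{eq:universal-estimate} and Arzel\`a--Ascoli give uniform convergence, hence convergence in $L^p$. The limit remains in $\boK$, because the condition $\|\nabla q\|_\infty\le1$ is weak$^*$ closed and $H^1_0$ is weakly closed. Metrizability of $\tau$ on bounded sets (the weak$^*$ topology is metrizable on bounded sets since $L^1$ is separable) lets sequential compactness suffice. I do not expect a real obstacle in either approach.
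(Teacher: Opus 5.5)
Your proposal is correct and follows essentially the paper's route. The paper bounds $|q(x)|$ by the mean value theorem along the segment from $x$ to a nearest boundary point $y(x)$, while you get the same segment estimate by extending $q\in H^1_0(\Omega)$ by zero to a $1$-Lipschitz function on $\R^N$; both use that $q$ vanishes on $\partial\Omega$, which for your extension holds because the $\boC^2$ boundary lies in the closure of $\R^N\setminus\overline\Omega$. For compactness, your identification $\boK=\ball(0,R)$ is exactly the paper's argument (uniform $\Ebanach$-bound plus property (i) of a $\tau$-domain), stated slightly more cleanly since it yields compactness directly rather than through subsequences.
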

\begin{proof}
Let $q\in\boK$. By Lemma~\ref{appendix-lemma:tau-domain}, $q\in W^{1,\infty}(\Omega)$ and, in particular, $q\in\boC(\overline\Omega)$. For any $x\in\Omega$, let us consider a point $y(x)\in\partial\Omega$ such that $|x-y(x)|=\min\{|x-y|:\ y\in\partial\Omega\}$. In particular, the segment joining $x$ to $y(x)$ is contained in $\Omega$ and $q(y(x))=0$. Therefore, the mean value theorem leads to
\[|q(x)|\leq |x-y(x)|\leq\operatorname{diam}(\Omega),\quad\text{for all }x\in\Omega,\]
which readily implies \eqref{eq:universal-estimate}. In particular, every sequence $\{q_n\}\subset\boK$ is bounded in $L^\infty(\Omega)$, and hence in the Banach space $\Ebanach$. Since $\boK$ is a $\tau$-domain, $\{q_n\}$ admits a subsequence converging in $\tau$ to some $q\in\boK$, which completes the proof.
\end{proof}	

Taking \eqref{eq:universal-estimate} into account, by applying a suitable $\boC^1$ truncation of $f$ for values $|s|\geq\operatorname{diam}(\Omega)$, we assume from now on, without loss of generality, that there exists a constant $\beta>0$ such that 
\begin{equation}\label{eq:truncation}
    |f(x,s)|\leq \beta,\quad\text{for a.e. } x\in \Omega, \text{ for all }s\in\R.
\end{equation}
The action functional $\boI:\Hilb\to(-\infty,\infty]$ associated with \eqref{eq:PDE-Minkowski} is defined by
\begin{equation}\label{eq:functional-PDE}
    \boI=\Psi+\boF,\quad \Psi(q)=\left\{\begin{array}{ll}\displaystyle\int_\Omega\left(1-\sqrt{1-|\nabla q|^2}\right) dx,&q\in\boK,\\
        \infty,&  q\in\Hilb\setminus\boK,
   \end{array}\right.\quad\boF(q)=\int_\Omega F(x,q)dx,\,q\in\Hilb,
\end{equation}
where $F(x,s)=\int_0^s f(x,t)dt$. We keep the notation $\X(\boK)$ as introduced in Section~\ref{Sect:FunctionalFramework}, now in the framework of \eqref{eq:sets-PDE}. The following result is analogous to Lemma~\ref{Lemma:Set-X-LFE}.
\begin{lemma}\label{lemma:functional-PDE}
For every $f:\Omega\times\R\to\R$ satisfying \eqref{hyp:Cath} and \eqref{eq:truncation}, the functional \eqref{eq:functional-PDE} belongs to $\X(\boK)$.
\end{lemma}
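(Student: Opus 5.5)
We need to verify \ref{H1:nonsmooth} and \ref{H2:smooth} for \eqref{eq:functional-PDE}, mirroring the proof of Lemma~\ref{Lemma:Set-X-LFE}.

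\textbf{The convex part $\Psi$.} Properness and convexity follow from the convexity of $\xi\mapsto 1-\sqrt{1-|\xi|^2}$ on the closed unit ball of $\R^N$. The domain is exactly $\boK$, which is convex and closed in $H_0^1(\Omega)$: if $q_n\to q$ in $H^1_0$ with $\|\nabla q_n\|_\infty\le1$, then a.e.\ convergence of $\nabla q_n$ along a subsequence gives $|\nabla q|\le1$ a.e.

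For continuity of $\Psi|_\boK$ in $\|\cdot\|_\Hilb$, let $q_n\to q$ in $\boK$. Pass to a subsequence with $\nabla q_n\to\nabla q$ a.e.; the integrand is bounded by $1$ and $\Omega$ is bounded, so dominated convergence applies. The subsequence principle then gives convergence of the whole sequence.

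For lower semicontinuity in $\tau$, take $q_n\to^\tau q$. Then $\nabla q_n\rightharpoonup\nabla q$ weakly$^*$ in $L^\infty$, hence weakly in $L^1$. A convex integral functional with a continuous integrand on the unit ball is weakly lower semicontinuous. Alternatively, one can invoke \cite[Proposition 1]{BerJebMaw-Neumann} as in Lemma~\ref{Lemma:Set-X-LFE}, or Mazur's lemma.

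\textbf{The smooth part $\boF$.} By \eqref{eq:truncation} we have $|F(x,s)|\le\beta|s|$ and $|f|\le\beta$. Hence $\boF$ is well defined on $L^1\supset H_0^1$. It is also $\boC^1$ on $H_0^1$, with
\[\boF'(q)[\varphi]=\int_\Omega f(x,q)\varphi\,dx,\]
which is the standard Nemytskii argument. Continuity of $q\mapsto f(\cdot,q)$ from $L^2$ to $L^2$ follows from the Carathéodory property \eqref{hyp:Cath}, the bound \eqref{eq:truncation}, and dominated convergence along a.e.-convergent subsequences. The Gateaux derivative is obtained from the mean value theorem with domination $\beta|\varphi|$, and continuity of the Gateaux derivative then gives Fréchet differentiability.

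\textbf{The uniform continuity \eqref{hyp:F-dualE}.} This step is in fact global:
\[|\boF(q)-\boF(p)|\le\beta\|q-p\|_{1}\le\beta|\Omega|^{1-1/p}\|q-p\|_{L^p}.\]
Choosing $\delta=\varepsilon/(2\beta|\Omega|^{1-1/p})$ works for any $q_0$, since $p,q\in\ball(q_0,\delta)$ implies $\|p-q\|_\Ebanach\le2\delta$.

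\textbf{Expected obstacle.} The only mildly delicate point is the $\tau$-lower semicontinuity of $\Psi$, together with confirming that weak$^*$ convergence of the gradients transfers correctly. Citing the Neumann-setting result \cite{BerJebMaw-Neumann}, or the convexity argument via Mazur, handles it.
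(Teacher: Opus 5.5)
Your proposal is correct and follows the paper's proof. Hypothesis \ref{H2:smooth} comes from the global Lipschitz bound $|\boF(q)-\boF(p)|\le\beta\|q-p\|_{1}\le\beta|\Omega|^{1-1/p}\|q-p\|_{L^p}$, obtained from \eqref{eq:truncation} and H\"older. The $\mathcal{C}^1$ regularity of $\boF$ and \ref{H1:nonsmooth} for $\Psi$ are the ``standard'' facts the paper only asserts (referring back to Lemma~\ref{Lemma:Set-X-LFE} and \cite{BerJebMaw-Neumann}). Your added details for these are sound: a.e.\ subsequences for the closedness of $\boK$ and the $\|\cdot\|_\Hilb$-continuity of $\Psi$, and weak-$L^1$ convergence of gradients plus Mazur for $\tau$-lower semicontinuity. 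One small fix: your choice of $\delta$ gives only $\le\varepsilon$, so take it marginally smaller to get the strict inequality required in \eqref{hyp:F-dualE}.
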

\begin{proof}
Bearing in mind \eqref{eq:truncation}, it is standard to verify that $\boF\in\boC^1(\Hilb;\R)$. Moreover, for an arbitrary $\varepsilon>0$, \eqref{eq:truncation}, together with the mean value theorem and Hölder's inequality, implies
\begin{equation}
    |\boF(q_1)-\boF(q_2)|\leq\beta \int_\Omega|q_1(x)-q_2(x)|dx\leq\varepsilon ,\quad\hbox{for all }q_1,q_2\in\boK \text{ with }\|q_1-q_2\|_p\leq\delta:=\varepsilon\beta^{-1}|\Omega|^{-1/p'}.
\end{equation}
In particular, $\boF$ satisfies \ref{H2:smooth}. Moreover, as remarked in the proof of Lemma~\ref{Lemma:Set-X-LFE}, it is standard to verify that $\Psi:\Hilb\to(-\infty,\infty]$ satisfies \ref{H1:nonsmooth}. This completes the proof.
\end{proof}

The properties established in Lemma~\ref{appendix-lemma:convergence-PDE} and Lemma~\ref{lemma:functional-PDE} yield the next result.
\begin{lemma}\label{lemma:PS-PDE}
For every $f:\Omega\times\mathbb{R}\to\mathbb{R}$ satisfying \eqref{hyp:Cath} and \eqref{eq:truncation}, the functional \eqref{eq:functional-PDE} satisfies $\PS$.
\end{lemma}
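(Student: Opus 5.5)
The plan is to follow the first half of the proof of Lemma~\ref{lemma:PS2}, where boundedness in $\Ebanach$ is now automatic. Let $\{q_n\}\subset\boK$ be a Palais--Smale sequence for \eqref{eq:functional-PDE} at a level $c\neq0$, with $\epsilon_n\to0$ as in \eqref{eq:PalaisSmale}. By Lemma~\ref{appendix-lemma:convergence-PDE}, $\boK$ is $\tau$-compact. Hence, up to a subsequence, $q_n\to^\tau q$ for some $q\in\boK$. This means $\|q_n-q\|_p\to0$ and $\nabla q_n\rightharpoonup\nabla q$ weakly$^*$ in $L^\infty$. The level condition $c\neq0$ plays no role here; $\PS$ will hold at every level.

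Next I would show $\Psi(q_n)\to\Psi(q)$. Lower semicontinuity of $\Psi$ in $\tau$ (hypothesis \ref{H1:nonsmooth}, verified in Lemma~\ref{lemma:functional-PDE}) gives $\Psi(q)\le\liminf\Psi(q_n)$. For the reverse inequality, take $p=q$ in \eqref{eq:PalaisSmale}:
\[
\Psi(q_n)\le\Psi(q)+\int_\Omega f(x,q_n)(q-q_n)\,dx+\epsilon_n\|q-q_n\|_\Hilb .
\]
Since $q,q_n\in\boK$, we have $\|\nabla(q-q_n)\|_2^2\le4|\Omega|$, so $\|q-q_n\|_\Hilb$ is bounded and the last term tends to zero. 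By \eqref{eq:truncation} and H\"older's inequality, the integral is bounded by $\beta|\Omega|^{1/p'}\|q-q_n\|_p\to0$. Hence $\limsup\Psi(q_n)\le\Psi(q)$. Since $\boF$ is continuous in $\Ebanach$ on $\boK$ (Lemma~\ref{lemma:functional-PDE}), $\boI(q)=\lim\boI(q_n)=c$.

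Finally I would pass to the limit in \eqref{eq:PalaisSmale} for an arbitrary fixed $p\in\boK$. For the derivative term, $\boF'(q_n)[p-q_n]=\int_\Omega f(x,q_n)(p-q_n)\,dx$. Since $q_n\to q$ in $L^p$, a further subsequence converges a.e. By \eqref{hyp:Cath}, $f(x,q_n)\to f(x,q)$ a.e., and $|f|\le\beta$. Dominated convergence then gives $f(\cdot,q_n)\to f(\cdot,q)$ in $L^{p'}$, so $\boF'(q_n)[p-q_n]\to\boF'(q)[p-q]$. The convergence $\Psi(q_n)\to\Psi(q)$ from the previous step handles the $\Psi$ term, and the $\epsilon_n$ term vanishes by the bound above. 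Together these yield \eqref{CriticalPoint}, so $q\in\Sigma(\boI)$ with $\boI(q)=c$.

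The only step I would check carefully is that $\boF'(q_n)$ extends to $L^p$ and converges there. This is immediate from the truncation \eqref{eq:truncation}, so I expect no real obstacle. The whole argument is easier than the Lorentz force case because $\tau$-compactness of $\boK$ replaces the vanishing estimates \eqref{statement:vanishingA}--\eqref{statement:vanishingE}.
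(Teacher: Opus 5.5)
Your proposal is correct and follows essentially the same route as the paper: $\tau$-compactness of $\boK$ from Lemma~\ref{appendix-lemma:convergence-PDE}, the test $p=q$ in \eqref{eq:PalaisSmale} to get $\Psi(q_n)\to\Psi(q)$ and hence $\boI(q)=c$, and the truncation bound \eqref{eq:truncation} with dominated convergence to pass to the limit in $\boF'(q_n)[p-q_n]$. The differences are minor: the paper passes to the limit using only lower semicontinuity of $\Psi$ and recovers the level afterwards, and you state explicitly that $\boK$ is bounded in $\Hilb$, so the $\epsilon_n$ term vanishes, which the paper leaves implicit.
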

\begin{proof}
Let $f$ satisfy  \eqref{hyp:Cath} and \eqref{eq:truncation}, and let $\{q_n\}\subset\boK$ be a Palais--Smale sequence. By Lemma~\ref{appendix-lemma:convergence-PDE}, up to a subsequence, $q_n\to^\tau q$ for some $q\in\boK$. Let us show that $q$ is a critical point of $\boI$. To do so, for $\varphi\in\Hilb$ one has 
\begin{equation}\label{eq:F'-PDE}
    \boF'(q_n)[\varphi-q_n]=\int_\Omega f(x,q_n)(\varphi-q_n )dx=\int_\Omega f(x,q_n)(\varphi-q)dx+\int_\Omega f(x,q_n)(q-q_n )dx.
\end{equation}
Using \eqref{eq:truncation}, the continuity of $f(x,\cdot)$, and H\"older's inequality, we obtain
\begin{equation}
    \left|\int_\Omega f(x,q_n)(q-q_n)dx\right|\leq\beta |\Omega|^{1/{p'}}\|q-q_n\|_p.
\end{equation}
Therefore, this term vanishes as $n\to\infty$, since $\|q_n-q\|_p\to 0$.

On the other hand, up to a subsequence, $q_n\to q$ a.e. in $\Omega$, and therefore $f(x,q_n(x))\to f(x,q(x))$ for a.e. $x\in\Omega$, by the continuity of $f(x,\cdot)$. Since $\varphi-q\in L^1(\Omega)$ and by \eqref{eq:truncation}, the dominated convergence theorem yields $\int_\Omega f(x,q_n)(\varphi-q)dx\to\int_\Omega f(x,q)(\varphi-q)$ as $n\to\infty$. Thus,
\begin{equation}
    \lim_{n\to\infty}\boF'(q_n)[\varphi-q_n]=\boF'(q)[\varphi-q],\quad\hbox{for all }\varphi\in\Hilb.
\end{equation}
As a consequence, passing to the limit in \eqref{eq:PalaisSmale} as $n\to\infty$, and using the lower semicontinuity of $\Psi$, we obtain
\begin{equation}
    \Psi(\varphi)-\Psi(q)+\boF'(q)[\varphi-q]\geq0,\quad\hbox{for all }\varphi\in \boK,
\end{equation}
which characterizes $q$ as a critical point of \eqref{eq:functional-PDE}. Finally, arguing as in the proof of Lemma~\ref{lemma:PS2}, we conclude that $\boI(q_n)\to\boI(q)$ as $n\to\infty$. This proves that every Palais--Smale sequence admits a subsequence converging in $\tau$ to a critical point at the corresponding level. In particular, $\PS$ holds.
\end{proof}
We conclude this section by proving that, under the hypotheses of Theorem~\ref{MainTheorem-Dirichlet}, \eqref{hyp:convex} holds for \eqref{eq:functional-PDE}. 

\begin{lemma}\label{lemma:FunctionalC2-Dirichlet}
Let $f:\Omega\times\R\to\R$ satisfy \eqref{hyp:PDE-f0-multiplicity} and \eqref{hyp:PDE-f1-multiplicity}. Then, $\boF:\Hilb\to\R$ is of class $\boC^2$ and there exists a constant $C>0$ such that  \begin{equation}\label{bounded2der-PDE-Dirichlet}
    |\boF''(q)[\varphi,\varphi]|\leq C\|\varphi\|_{\Hilb}^2,\quad\text{for all }q\in \boK, \hbox{and all }\varphi\in\Hilb.
    \end{equation}
    In particular, \eqref{hyp:convex} holds for all $\mu\geq C/2$.
\end{lemma}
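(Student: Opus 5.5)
We compute the first two derivatives of $\boF(q)=\int_\Omega F(x,q)\,dx$ explicitly and bound the second one uniformly. Here $F(x,s)=\tfrac12 a(x)s^2-G(s)$ with $G(s)=\int_0^s g$. A point needs care first: the truncation \eqref{eq:truncation} has replaced $f$ by a $\boC^1$ modification for $|s|\geq\operatorname{diam}(\Omega)$. We will choose this truncation so that the modified $f$ still satisfies $f(x,s)=a(x)s-\tilde g(s)$, with $\tilde g\in\boC^1(\R)$ and $\tilde g'$ bounded on $\R$. For instance, take $f(x,\cdot)$ constant beyond $\pm(\operatorname{diam}(\Omega)+1)$ with a $\boC^1$ transition. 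Then $\partial_s f(x,s)=a(x)\chi(s)-\tilde g'(s)$ is bounded by some constant $\beta_1$, uniformly in $(x,s)$ and for a.e. $x$. Since the truncation does not affect values on $\boK$ by \eqref{eq:universal-estimate}, it changes neither the critical points nor the relevant levels.

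Next we show $\boF\in\boC^2(\Hilb;\R)$ with
\[
\boF''(q)[\psi,\varphi]=\int_\Omega \partial_sf(x,q)\,\psi\varphi\,dx .
\]
The first derivative $\boF'(q)[\varphi]=\int_\Omega f(x,q)\varphi\,dx$ is already known from Lemma~\ref{lemma:functional-PDE}. For the second derivative, we write
\[
f(x,q+\psi)-f(x,q)-\partial_sf(x,q)\psi=\int_0^1\big(\partial_sf(x,q+\theta\psi)-\partial_sf(x,q)\big)\psi\,d\theta .
\]
Because $\partial_s f$ is bounded and continuous in $s$, dominated convergence together with the Sobolev embedding $H_0^1\hookrightarrow L^{2}$ and a Hölder estimate in $L^{p}$ shows that the remainder is $o(\|\psi\|_\Hilb)$ in $\Hilb^*$. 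The continuity of $q\mapsto\boF''(q)$ in operator norm follows the same way. This is the only slightly delicate step: uniform-in-$\varphi$ dominated convergence is not automatic. We handle it by estimating $\sup_{\|\varphi\|_\Hilb\le1}\int|\partial_sf(x,q_n)-\partial_sf(x,q)|\varphi^2$ by $\|\partial_sf(\cdot,q_n)-\partial_sf(\cdot,q)\|_{L^{r}}\|\varphi\|_{L^{2r'}}^2$, with $2r'\le p$ (i.e. $r=p/(p-2)$ when $N\ge3$). We then use that $\partial_s f(\cdot,q_n)\to\partial_sf(\cdot,q)$ in measure, with a uniform bound, so the $L^r$ norm tends to $0$ along subsequences.

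The bound \eqref{bounded2der-PDE-Dirichlet} is then immediate:
\[
|\boF''(q)[\varphi,\varphi]|\le\beta_1\|\varphi\|_2^2\le\beta_1\lambda_1^{-1}\|\varphi\|_\Hilb^2,
\]
where $\lambda_1$ is the first Dirichlet eigenvalue of $-\Delta$ (Poincaré inequality). So we may take $C=\beta_1\lambda_1^{-1}$; the bound in fact holds for all $q\in\Hilb$.

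Finally, for \eqref{hyp:convex} we argue exactly as in Lemma~\ref{lemma:FunctionalC2}. For $\mu\ge C/2$, the function $t\mapsto\boF(p+t(q-p))+\mu\|p+t(q-p)\|_\Hilb^2$ has second derivative $\boF''[\,q-p,q-p\,]+2\mu\|q-p\|_\Hilb^2\ge0$, so $\boF+\mu\|\cdot\|_\Hilb^2$ is convex on $\Hilb$. Adding the convex functional $\Psi$, which takes the value $\infty$ off $\boK$, gives the convexity of $\boI+\mu\|\cdot\|_\Hilb^2$.
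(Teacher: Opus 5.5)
Your proof is correct and follows essentially the same route as the paper. Both arguments choose the truncation so that $\partial_s f$ is bounded, compute $\boF''(q)[\psi,\varphi]=\int_\Omega \partial_s f(x,q)\,\psi\varphi\,dx$, bound it by a multiple of $\|\varphi\|_\Hilb^2$, and deduce convexity along segments as in Lemma~\ref{lemma:FunctionalC2}. You additionally supply the H\"older/dominated-convergence verification of the $\boC^2$ regularity that the paper calls straightforward. One harmless slip: after truncation the datum has the form $a(x)\theta(s)-g(\theta(s))$, not $a(x)s-\tilde g(s)$. The former is what your formula $\partial_s f=a(x)\chi(s)-\tilde g'(s)$ actually uses.
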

\begin{proof}
 It is straightforward to verify that $\boF$ is of class $\boC^2$, with
 \begin{equation*}
     \boF''(q)[\psi,\varphi]=\int_\Omega\partial_s f(x,q)\psi\varphi dx=\int_\Omega \left(a(x)- g'(q)\right)\psi\varphi dx,\quad\hbox{for all }(q,\psi,\varphi)\in\Hilb^3.
 \end{equation*}
Moreover, by choosing the truncation conveniently, one has the estimate \eqref{eq:truncation} also for $\partial_s f$, so that
 \begin{equation}
    \left| \boF''(q)[\varphi,\varphi]\right|\leq\beta \|\varphi\|_\Hilb^2,\quad\hbox{for all }(q,\varphi)\in\boK\times\Hilb.
 \end{equation}
From this point, the same argument as in Lemma~\ref{lemma:FunctionalC2} completes the proof.
\end{proof}

\subsection{Multiplicity of critical points}\label{Sect:Mink-Dirichlet}

The next theorem is the counterpart of Theorem~\ref{theorem:CriticalPoint} for Equation \eqref{eq:PDE-Minkowski}, since it characterizes its solutions variationally as critical points of \eqref{eq:functional-PDE}. This result is essentially contained in \cite[Theorem 2.1]{BerJebMaw}. Adapting the proof to our setting is immediate so we omit it.
\begin{theorem}\label{Th:criticalpoint-PDE}
    A function $q\in\boK$ is a solution of \eqref{eq:PDE-Minkowski} if, and only if, $q$ is a critical point of \eqref{eq:functional-PDE}.
\end{theorem}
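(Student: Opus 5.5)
The statement to be proved is Theorem~\ref{Th:criticalpoint-PDE}: a function $q\in\boK$ solves \eqref{eq:PDE-Minkowski} in the sense of Definition~\ref{def:sol-PDE} if and only if it is a critical point of \eqref{eq:functional-PDE} in the sense of Definition~\ref{def:CriticalPoint}. I would treat the two implications separately. The converse direction is the delicate one and will rest on \cite[Theorem 2.1]{BerJebMaw}.

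\emph{Solution $\Rightarrow$ critical point.} Let $q$ be a solution. Then $\|\nabla q\|_\infty<1$, so $\Psi$ is Gateaux differentiable at $q$ along every direction $p-q$ with $p\in\boK$. For such $p$, convexity of $\Psi$ gives
\[
\Psi(p)-\Psi(q)\ \ge\ \int_\Omega \frac{\nabla q\cdot\nabla(p-q)}{\sqrt{1-|\nabla q|^2}}\,dx .
\]
Since $q\in W^{2,p}$ with $p>N$ and $|\nabla q|$ is bounded away from $1$, the flux $\nabla q/\sqrt{1-|\nabla q|^2}$ lies in $W^{1,p}$. I can therefore integrate by parts against $p-q\in H_0^1$ and use the equation a.e. This turns the right-hand side into $-\int_\Omega f(x,q)(p-q)\,dx$, which equals $-\boF'(q)[p-q]$. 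Hence \eqref{CriticalPoint} holds. One caveat: $\boF$ is built from the truncated $f$, but by \eqref{eq:universal-estimate} the truncation does not act on values of $q$, so the original and truncated problems coincide.

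\emph{Critical point $\Rightarrow$ solution.} Let $q\in\boK$ satisfy \eqref{CriticalPoint}. Then $q$ minimizes the convex functional $p\mapsto\Psi(p)+\int_\Omega h\,p\,dx$ over $\boK$, where $h:=f(x,q)\in L^\infty$ is now frozen. This is exactly the setting of the linear problem treated in \cite{BerJebMaw}, following Bartnik--Simon \cite{BS}. Its key ingredient is that a minimizer of the Minkowski area functional with bounded right-hand side is strictly spacelike, i.e.\ $\|\nabla q\|_\infty<1$. I would invoke that result directly, since the statement says adapting it is immediate. Once strict spacelikeness is known, $\Psi$ is differentiable at $q$ in the directions $\pm\varphi$ for every $\varphi\in C_c^\infty$ (take $p=q\pm t\varphi$ with $t$ small). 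This yields the weak Euler--Lagrange equation. The operator is uniformly elliptic on the relevant range of gradients, so standard $W^{2,p}$ regularity (e.g.\ the argument in \cite{BS}) gives $q\in W^{2,p}$ and the equation a.e.

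\emph{Main obstacle.} The hard step is the strict inequality $\|\nabla q\|_\infty<1$ in the converse direction. It is a genuine geometric result (Bartnik--Simon), so I would cite it rather than reprove it. The remaining steps are routine.
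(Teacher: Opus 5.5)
Your proposal is correct and is essentially the argument the paper defers to in \cite[Theorem 2.1]{BerJebMaw}. One direction uses convexity of $\Psi$ plus integration by parts; for the converse, freezing $h=f(x,q)\in L^\infty$ makes $q$ a minimizer of $p\mapsto\Psi(p)+\int_\Omega hp\,dx$ over $\boK$, and one then invokes the Bartnik--Simon theory for this frozen problem (your observation via \eqref{eq:universal-estimate} that the truncation is inactive is precisely the adaptation the paper calls immediate). The only difference is packaging: the standard argument identifies $q$ with the unique $W^{2,p}$ solution of the frozen problem by strict convexity, whereas you re-derive the Euler--Lagrange equation and $W^{2,p}$ regularity from strict spacelikeness, which rests on the same ingredient.
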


The verification of the local linking condition \eqref{hyp:Splitting} follows from the classical spectral analysis of the operator $-\Delta+a$ in $\Omega$ under Dirichlet boundary conditions. More precisely, it is well known that its spectrum is a non-decreasing divergent sequence $\{\lambda_n\}\subset\R$. We will assume that
\begin{equation}\label{eq:negative_eigenvalues}
    \lambda_k< 0<\lambda_{k+1},\quad\text{for some }k\in\mathbb{N}.
\end{equation}
Let us denote by $\{\varphi_n\}$ the associated eigenfunctions (unitary in $\|\cdot\|_2)$ forming an orthonormal basis of $L^2(\Omega)$, and let us consider the subspaces
\begin{equation*}    \Hilb_1=\operatorname{span}\{\varphi_1,\dots,\varphi_k\},\quad \Hilb_2=\Big\{u\in \Hilb:\ \int_\Omega uv dx=0,\ \text{for all }v\in\Hilb_1\Big\},
\end{equation*}
so that $\Hilb=\Hilb_1\oplus\Hilb_2$.

\begin{lemma}\label{lemma:splitting-PDE}
    Let $f:\Omega\times\R\to\R$ satisfy \eqref{hyp:PDE-f0-multiplicity} and \eqref{hyp:PDE-f1-multiplicity}, for some $\delta>0$ sufficiently small, and assume that \eqref{eq:negative_eigenvalues} holds. Then, there exist $C,r_0>0$ such that
\begin{equation}\label{hyp:Splitting-PDE}
\left\{\begin{array}{ll}
\boI(q)\leq -C\|q\|_{\Hilb}^2,&\hbox{for all }q\in\Hilb_1\ \hbox{ with }\|q\|_\Hilb\in(0,r],\\
\boI(q)\geq C\|q\|_\Hilb^2,&\hbox{for all }q\in\Hilb_2\ \hbox{ with }\|q\|_\Hilb\in(0,r].
\end{array}\right.
\end{equation}
\end{lemma}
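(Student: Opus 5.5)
We split $\boI=\Psi+\boF$ into a quadratic part plus remainders and compare each with the quadratic form
\[
Q(q):=\tfrac12\int_\Omega\left(|\nabla q|^2+a(x)q^2\right)dx .
\]
Write $G(s)=\int_0^s g$. By \eqref{hyp:PDE-f1-multiplicity} and $g\in\boC^1$ we have $|G(s)|\le \omega(|s|)\,s^2$ with $\omega(t)\to0$ as $t\to0$. Since $F(x,s)=\tfrac12 a(x)s^2-G(s)$, the functional \eqref{eq:functional-PDE} can be written as
\[
\boF(q)=\tfrac12\int_\Omega a q^2\,dx-\int_\Omega G(q)\,dx .
\]
For the kinetic part we use the elementary bounds
\[
\tfrac12 t^2\le 1-\sqrt{1-t^2}\le \tfrac12 t^2+t^4,\qquad t\in[0,\tfrac12].
\]
The lower bound holds for all $t\in[0,1]$. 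Hence $\Psi(q)\ge \tfrac12\|\nabla q\|_2^2$ on $\boK$.

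\textbf{On $\Hilb_2$.} Here $\boI(q)=\infty$ off $\boK$, so it suffices to take $q\in\boK\cap\Hilb_2$. Then $\boI(q)\ge Q(q)-\int_\Omega|G(q)|\,dx$. Since $\lambda_{k+1}>0$, the standard spectral argument gives $Q(q)\ge c_2\|q\|_\Hilb^2$ on $\Hilb_2$ for some $c_2>0$. This follows by combining $Q(q)\ge\tfrac{\lambda_{k+1}}2\|q\|_2^2$ with $Q(q)\ge\tfrac12\|\nabla q\|_2^2-\tfrac12\|a\|_\infty\|q\|_2^2$ through a convex combination.

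The remainder must be $o(\|q\|_\Hilb^2)$, uniformly for $q\in\boK$ with small $H^1$ norm. This is the main obstacle, because $H^1$-smallness does not control $\|q\|_\infty$ when $N\ge2$. We will first try to use membership in $\boK$. Since $\|\nabla q\|_\infty\le1$, an interpolation (Gagliardo–Nirenberg type) inequality between $L^2$ and $W^{1,\infty}$ gives $\|q\|_\infty\le C\|q\|_2^{\theta}$ for some $\theta\in(0,1)$, with $C$ uniform on $\boK$. Alternatively, we can split $\{|q|\le\kappa\}$ from $\{|q|>\kappa\}$ and use the bound $|G(s)|\le C s^2$ coming from the truncation \eqref{eq:truncation}, together with the smallness of the measure of $\{|q|>\kappa\}$. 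Either way, $\|q\|_\infty\to0$ as $\|q\|_\Hilb\to0$ within $\boK$, so $\int_\Omega|G(q)|\le \omega(\|q\|_\infty)\|q\|_2^2=o(\|q\|_\Hilb^2)$. Choosing $r$ small then yields $\boI(q)\ge \tfrac{c_2}{2}\|q\|_\Hilb^2$.

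\textbf{On $\Hilb_1$.} This space is finite dimensional and spanned by eigenfunctions. Elliptic regularity, using $a\in L^\infty$ and the $\boC^2$ boundary, gives $\varphi_j\in W^{2,p}$ for all $p<\infty$, hence $\varphi_j\in W^{1,\infty}$. By equivalence of norms, $\|\nabla q\|_\infty+\|q\|_\infty\le C_1\|q\|_\Hilb$ on $\Hilb_1$. Consequently $\Hilb_1\cap\{\|q\|_\Hilb\le r\}\subset\boK$ for small $r$, with $\|\nabla q\|_\infty\le\tfrac12$. Using the upper bound on the kinetic integrand,
\[
\boI(q)\le Q(q)+\|\nabla q\|_4^4+\omega(\|q\|_\infty)\|q\|_2^2 .
\]
On $\Hilb_1$ we have $Q(q)\le\tfrac{\lambda_k}{2}\|q\|_2^2\le -c_1\|q\|_\Hilb^2$, again by norm equivalence. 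Both error terms are $O(\|q\|_\Hilb^4)$ or $o(\|q\|_\Hilb^2)$ through the $L^\infty$ bounds. Shrinking $r$ gives the first inequality in \eqref{hyp:Splitting-PDE}, and we take $C=\min\{c_1,c_2\}/2$.
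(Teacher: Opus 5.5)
Your proof is correct. Its skeleton matches the paper's: the spectral splitting of the quadratic form $Q$, and elliptic regularity plus norm equivalence on the finite-dimensional $\Hilb_1$ to place a small ball of $\Hilb_1$ inside $\boK$ with $\|\nabla q\|_\infty$ small. Your explicit bound $1-\sqrt{1-t^2}\le\frac12t^2+t^4$ plays the role of the paper's constant $m_\eta$.

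The genuine difference is how you control the remainder $\int_\Omega G(q)\,dx$. The paper uses the standard estimate $|G(s)|\le\varepsilon s^2+C_\varepsilon|s|^p$, with $p>2$ the Sobolev exponent, followed by the embedding $H_0^1(\Omega)\subset L^p(\Omega)$. This bounds the remainder by $(\varepsilon C+C'C_\varepsilon\|q\|_\Hilb^{p-2})\|q\|_\Hilb^2$ without any $L^\infty$ control, so the ``main obstacle'' you identify is not really an obstacle. The price is a global growth bound on $G$, which the paper extracts from the truncation \eqref{eq:truncation}.

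You instead exploit the Lipschitz constraint built into $\boK$. The zero extension of $q\in\boK$ is $1$-Lipschitz on $\R^N$, so $|q(x_0)|=M$ forces $|q|\ge M/2$ on $B(x_0,M/2)$. Hence $\|q\|_\infty\le C\|q\|_2^{2/(N+2)}$ uniformly on $\boK$, and $\int_\Omega|G(q)|\le\omega(\|q\|_\infty)\|q\|_2^2$, where $\omega$ is the modulus coming from $G(s)=o(s^2)$. This route uses only the behaviour of $g$ near $0$. It is also insensitive to the truncation, since on $\boK$ one already has $\|q\|_\infty\le\operatorname{diam}(\Omega)$. It relies on the specific structure of $\boK$, whereas the paper's argument is the classical one and works on any $H^1$-small set.

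One inaccuracy: your second alternative (splitting $\{|q|\le\kappa\}$ and $\{|q|>\kappa\}$) does not give $\|q\|_\infty\to0$. To make $\int_{\{|q|>\kappa\}}q^2=o(\|q\|_\Hilb^2)$ you would need a H\"older bound through $L^p$ with $p>2$, which is essentially the paper's Sobolev argument. Your first route already works, so this is harmless.
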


\begin{remark}\label{remark:negative-minimum-PDE}
    Recall that \cite[Theorem~2.1]{BerJebMaw} provides a minimizer of $\boI$. Notice that the first inequality of \eqref{hyp:Splitting-PDE} implies that $\inf_\Hilb\boI<0$ attains negative values near zero. In particular, the minimizers cannot be zero.
\end{remark}

\begin{proof}[Proof of Lemma~\ref{lemma:splitting-PDE}]
In the proof, we will denote $G(s)=\int_0^s g(t)dt$. 

Let $q\in \Hilb_2$. If $q\not\in\boK$, then $\boI(q)\geq C\|q\|_\Hilb^2$ trivially. On the contrary, if $q\in\boK$, then
\[\boI(q)\geq \frac12\int_\Omega\left(|\nabla q|^2+a(x)q^2-2 G(q))\right)dx.\]
Hence, \cite[Lemma 2.15]{Willem} states that there exists $C_1>0$ such that
\[\boI(q)\geq C_1\|q\|_\Hilb^2-\int_\Omega G(q)dx.\]
In addition, by \eqref{hyp:PDE-f1-multiplicity} it follows that $G(s)=o(s^2)$ whenever $s\approx 0$. Moreover,  $G\in L^\infty(\R)$ by \eqref{eq:truncation}. Hence, for every $\varepsilon>0$, there exists $C_\varepsilon>0$ such that
\begin{equation}\label{eq:G-behavior}
    |G(s)|\leq\varepsilon s^2 + C_\varepsilon |s|^p,\quad s\in\R.
\end{equation}
Therefore, 
\[\boI(q)\geq C_1\|q\|_\Hilb^2-\varepsilon\|q\|_2^2- C_\varepsilon\|q\|_p^p,\quad \text{for all }q\in\Hilb_2\cap\boK.\]
Hence, the Sobolev embedding $\Hilb\subset \Ebanach$ yields
\[\boI(q)\geq (C_1-\varepsilon  C_2- C_3 C_\varepsilon\|q\|_\Hilb^{p-2})\|q\|_\Hilb^2,\quad \text{for all }q\in\Hilb_2\cap\boK,\]
for some constants $C_2,C_3>0$. Thus, taking $\varepsilon$, and then $r$, both small enough, the second inequality in \eqref{hyp:Splitting-PDE} follows.

To prove the first inequality in \eqref{hyp:Splitting-PDE}, let us define 
\[m_\eta =2\frac{1-\sqrt{1-\eta^2}}{\eta^2}, \text{ for all }\eta\in(0,1],\quad m_0=1.\]
It is clear that $\eta\mapsto m_\eta$ is a continuous increasing function in $[0,1]$. Then, 
\begin{equation}
    \Psi(q)\leq \dfrac{m_\eta}{2}\| q\|_\Hilb^2,\quad\hbox{for all }q\in\Hilb\hbox{ with } \|\nabla q\|_\infty\leq \eta.
\end{equation}
Taking into account that
\begin{equation}\label{eq:parseval}
q=\sum_{n=1}^k\beta_n\varphi_n,\quad\|q\|_2^2=\sum_{n=1}^k\beta_n^2,\quad \text{for all }q\in\Hilb_1,\text{ for some }\beta_1,\dots,\beta_k\in\R,
\end{equation}
standard elliptic estimates yield the existence of constants $C_4,C_5>0$ (depending on $a$ and $k$) such that
\begin{equation}\label{eq:elliptic_estimates}
        \|\nabla q\|_\infty\leq C_4\|q\|_\Hilb\leq C_4C_5\|q\|_2,\quad\hbox{for all }q\in\Hilb_1.
\end{equation}
Therefore, for any $\eta\in (0,1]$, we may take $r$ small enough so that
\begin{equation}\label{ineq:Psi-m_eta}
    \Psi(q)\leq \dfrac{m_\eta}{2}\| q\|_\Hilb^2,\quad\hbox{for all }q\in\Hilb_1\hbox{ with } \| q\|_\Hilb\leq r.
\end{equation}
Since the eigenfunctions are orthogonal in $L^2(\Omega)$, it is immediate that
\[\int_\Omega\Big(|\nabla q|^2+a(x)q^2\Big)dx=\sum_{n=1}^k \lambda_n \beta_n^2,\quad\text{for all }q=\sum_{n=1}^k\beta_n\varphi_n\in\Hilb_1.\]
As a consequence, \eqref{eq:negative_eigenvalues} and \eqref{eq:parseval} imply
\[\int_\Omega\Big(|\nabla q|^2+a(x)q^2\Big)dx\leq\lambda_k\|q\|_2^2,\quad\text{for all }q\in\Hilb_1.\]
Recalling by \eqref{eq:negative_eigenvalues} that $\lambda_k<0$, it follows from \eqref{eq:elliptic_estimates} that 
\[\int_\Omega\Big(|\nabla q|^2+a(x)q^2\Big)dx\leq\lambda_kC_5^{-1}\|q\|_\Hilb^2,\quad\text{for all }q\in\Hilb_1.\]
In sum, using \eqref{hyp:PDE-f1-multiplicity}, by the Sobolev embedding, we derive
\begin{align*}
\boI(q)\leq \int_\Omega\Big(\frac{m_\eta}{2}|\nabla q|^2 + \frac{a(x)}{2}q^2\Big)dx+\varepsilon\|q\|_2^2+ C_\varepsilon\|q\|_p^p\leq\frac12(m_\eta-1+C_5^{-1}\lambda_k+2\varepsilon C_2 + 2C_3C_\varepsilon\|q\|_\Hilb^{p-2})\|q\|_\Hilb^2,
\end{align*}
for all $q\in\Hilb_1$ with $\|q\|_\Hilb\leq r$. We conclude by taking $\eta$, $\varepsilon$ and $r$ (in this order) sufficiently small.
\end{proof}

\begin{proof}[Proof of Theorem~\ref{MainTheorem-Dirichlet}]
 By  Lemmas~\ref{lemma:functional-PDE},~\ref{lemma:PS-PDE},~\ref{lemma:FunctionalC2-Dirichlet},~\ref{lemma:splitting-PDE}, and by Remark \ref{remark:negative-minimum-PDE}, the hypotheses of Theorem~\ref{MainTheorem} are satisfied. As a consequence, $\boI$ admits two nonzero critical points. Finally, 
 Theorem~\ref{Th:criticalpoint-PDE} assures that such critical points are solutions to \eqref{eq:PDE-Minkowski}.
\end{proof}
		
{\bf Funding}. This work has been supported by the Spanish MCIU/AEI project PID2024–56079NA–I00. M.G. is partially supported by the Spanish MCIN/AEI project PID2022-136795NB-I00, project CEX2024-001517-M.

\appendix

\section*{Appendix A}
\makeatletter
\def\@currentlabel{A} 
\makeatother
\label{appendix-lemma}
\gdef\thelemma{A.\arabic{lemma}}
\setcounter{lemma}{0}
The proof will be performed for $\Hilb=H^1(\Omega)$, being analogous when the space $\Hilb$ is equipped with periodic or Dirichlet boundary conditions.
\begin{lemma}\label{appendix-lemma:tau-domain}
        The set $\boK$ defined in \eqref{eq:example_spaces} is a $\tau$-domain, where $\tau$ is induced by \eqref{eq:topology-K}. Moreover, $\boK\subset L^\infty(\Omega)$.
        \end{lemma}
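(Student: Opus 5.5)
To prove Lemma~\ref{appendix-lemma:tau-domain} I first check that $\boK$ is convex and closed in $\Hilb=H^1(\Omega)$, then show $\boK\subset L^\infty(\Omega)$, and finally verify properties (i) and (ii) of Definition~\ref{def:domain-K} for the topology induced by \eqref{eq:topology-K}.

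Convexity is immediate, because $q\mapsto\|\nabla q\|_\infty$ is a seminorm. For closedness, take $q_n\in\boK$ with $q_n\to q$ in $H^1$. A subsequence satisfies $\nabla q_n\to\nabla q$ a.e., so $|\nabla q|\le1$ a.e. and $q\in\boK$. For the inclusion in $L^\infty$, note that $\Omega$ is bounded with smooth boundary, hence a Lipschitz (extension) domain. A function $q\in H^1(\Omega)$ with $\nabla q\in L^\infty$ lies in $W^{1,\infty}(\Omega)$, is Lipschitz, and is therefore bounded. A quantitative bound is $\|q\|_\infty\le C_\Omega(\|q\|_{L^1}+\|\nabla q\|_\infty)$, which follows from Morrey's inequality, or from the Poincaré--Wirtinger inequality in $L^\infty$ on the connected Lipschitz domain. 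The same bound controls $\|q\|_\infty$ on $\Ebanach$-balls $\ball(q,\eta)$, because the $L^1$ norm is bounded by the $L^p$ norm when $|\Omega|<\infty$.

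Property (ii) is built into the definition of $\tau$, since $\|\cdot\|_\boK\ge\|\cdot\|_\Ebanach$. To make $\|\nabla q\|_{w^*}$ a genuine norm on bounded sets, I fix a countable dense family $\{\phi_k\}\subset L^1(\Omega;\R^N)$ in the unit ball and set $\|\xi\|_{w^*}=\sum_k2^{-k}|\int\xi\cdot\phi_k|$. On bounded subsets of $L^\infty$ this metrizes $\sigma(L^\infty,L^1)$ (\cite[Theorem 3.28]{Bre}). Since $\boK$ is bounded in $W^{1,\infty}$ seminorm, $\tau$-convergence in $\boK$ is then equivalent to $q_n\to q$ in $\Ebanach$ together with $\nabla q_n\overset{*}{\rightharpoonup}\nabla q$.

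Property (i) is the main step. Since $\tau$ is metrizable on $\boK$, sequential compactness suffices. Take $\{q_n\}\subset\ball(q,\eta)$. By the bound above, $q_n$ is bounded in $W^{1,\infty}(\Omega)$: $\|q_n\|_\infty$ is controlled and $\|\nabla q_n\|_\infty\le1$. Arzelà--Ascoli on $\overline\Omega$, using the uniform Lipschitz bound and the Lipschitz boundary, gives a subsequence converging uniformly to some $p$. Banach--Alaoglu, together with separability of $L^1$, gives a further subsequence with $\nabla q_n\overset{*}{\rightharpoonup}\xi$. Testing against $\partial_i\phi$ for $\phi\in C_c^\infty$ shows $\xi=\nabla p$. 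Weak-$*$ lower semicontinuity of the norm gives $\|\nabla p\|_\infty\le1$. Since $p$ is bounded and $\Omega$ is bounded, $p\in H^1$, so $p\in\boK$. For subcritical $p$ and $N\ge2$, uniform convergence implies convergence in $L^p$. When $N=1$ we have $\Ebanach=L^\infty$, and uniform convergence is exactly convergence in $\Ebanach$. In particular $\|p-q\|_\Ebanach\le\eta$, so $p\in\ball(q,\eta)$ and $q_n\to^\tau p$. For the critical exponent $p=2N/(N-2)$, uniform convergence on a bounded set still gives $L^p$ convergence, so no compact Sobolev embedding is needed.

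The case of periodic or Dirichlet boundary conditions, and the vectorial case, follow identically. The only additional point is that the boundary condition is preserved under uniform limits.
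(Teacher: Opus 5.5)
Your argument is correct. It shares the paper's skeleton: convexity, closedness in $\Hilb$, the inclusion $\boK\subset L^\infty(\Omega)$, property (ii) built into $\|\cdot\|_\boK$, and property (i) by extracting a subsequence for $q_n$ and a weak-$*$ subsequence for $\nabla q_n$, then identifying the limits distributionally. The tools differ at each step, however.

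\emph{Inclusion in $L^\infty$.} The paper bootstraps Sobolev embeddings ($q\in W^{1,p}\Rightarrow q\in W^{1,p_1}\Rightarrow\cdots$ until the exponent exceeds $N$). You use the geometry of the Lipschitz domain instead: $\nabla q\in L^\infty$ makes $q$ locally $1$-Lipschitz, quasiconvexity of $\Omega$ gives a uniform oscillation bound, and averaging yields $\|q\|_\infty\le C_\Omega(\|q\|_{L^1}+\|\nabla q\|_\infty)$. Phrase it in that order, since membership in $W^{1,\infty}$ is the conclusion, not the premise. Also note that your Morrey alternative, in its usual $W^{1,s}$-norm form with $s>N$, presupposes $q\in L^s$, which is exactly what the paper's bootstrap supplies.

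\emph{Closedness.} Your a.e.\ convergence of a subsequence of gradients is shorter than the paper's passage to pointwise limits of locally $1$-Lipschitz functions.

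\emph{Compactness of $\ball(q,\eta)$.} The paper combines weak compactness in $H^1$ with the compact embedding $W^{1,p}\subset L^p$; you use Arzel\`a--Ascoli. Your route gives $\Ebanach$-convergence for every admissible exponent at once, including the critical $p=2N/(N-2)$ and $\Ebanach=L^\infty$ when $N=1$ (where the paper's compact embedding is itself an Ascoli argument). Your uniform $L^\infty$ bound on $\Ebanach$-balls is exactly what makes this work.

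You also make explicit two points the paper leaves tacit. First, $\tau$ is metrizable on $\boK$, so sequential compactness suffices. Second, the limit lies in the ball, by weak-$*$ lower semicontinuity of $\|\cdot\|_\infty$ and closedness of the $\Ebanach$-ball. The trade-off is that you rely on quasiconvexity of bounded Lipschitz domains, whereas the paper only quotes standard Sobolev theorems; those need comparable boundary regularity through the extension property.

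One cosmetic point: rename the uniform limit, since $p$ already denotes the exponent of $\Ebanach$.
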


        \begin{proof}
        One trivially has that $\boK$ is convex. Let us continue by proving that $\boK\subset L^\infty(\Omega)$. Indeed, let $q\in \boK$. Since $\|\nabla q\|_{\infty}\leq 1$, it follows that $q\in W^{1,p}(\Omega)$. If $p> N$, then the Sobolev embeddings yield $q\in L^\infty(\Omega)$. Otherwise, one has that $q\in W^{1,p_1}(\Omega)$, where $p_1=pN/(N-p)>p$ if $p<N$, and $p\ll p_1<\infty$ if $p=N$. Repeating the argument inductively, we arrive in a finite number of steps at $q\in L^\infty(\Omega)$.
        
        In order to show that $\boK$ is closed in $\Hilb$, we follow \cite[Lemma 2.2]{BerJebMaw}. Indeed, let $\{q_n\}\subset \Hilb$ be such that $\|q_n-q\|_\Hilb\to 0$ as $n\to\infty$ for some $q\in\Hilb$. In particular, $q_n\to q$ a.e. in $\Omega$. Hence, for every $x_0\in\Omega$ and $r_0>0$ such that the Euclidean ball $B(x_0,r_0)\subset\Omega$, the mean value theorem implies that
        \[|q_n(x)-q_n(y)|\leq \|\nabla q_n\|_\infty|x-y|\leq |x-y|,\quad\text{for all }x,y\in B(x_0,r_0).\]
        Passing to the pointwise limit, we derive that $q$ is locally Lipschitz, with Lipschitz constant $1$ at every ball contained in $\Omega$. A similar standard argument shows that $q\in L^\infty(\Omega)$ and, in particular, $q\in\boK$ (see e.g. \cite[Remark 4.2]{Heinonen}).

        Finally, let $\tau$ be the topology in $\boK$ induced by the norm \eqref{eq:topology-K}. Notice that $\tau$ is trivially stronger than the strong topology of $L^p(\Omega)$. Moreover, equipped with this topology, the balls \eqref{def:ball} are compact sets. Indeed, let $\{q_n\}\subset B_\boK(q_0,r)$. On the one hand, $\{q_n\}$ is bounded in $W^{1,p}(\Omega)$. Thus, there exists $q\in H^1(\Omega)$ such that, passing to a subsequence, $q_n\rightharpoonup q$ weakly in $H^1(\Omega)$ as $n\to\infty$. Therefore, $q_n\to q$ pointwise.
        On the other hand, since
        $\|\nabla q_n\|_{L^\infty(\Omega)}\leq 1$ for all $n$, there exists $V\in L^\infty(\Omega;\R^d)$ such that, passing to a subsequence, $\|\nabla q_n-V\|_{w^*}\to 0$ as $n\to\infty$. In particular, $\nabla q_n\rightharpoonup V$ weakly in $L^2(\Omega;\R^d)$, so $V=\nabla q$. Finally, since $W^{1,p}(\Omega)$ is compactly embedded in $L^p(\Omega)$, there exists $u\in L^p(\Omega)$ such that, passing again to a subsequence, $q_n\to u$ strongly in $L^p(\Omega)$ as $n\to\infty$. In particular, $q_n\to u$ pointwise, so $u=q$. 
    \end{proof}

\section*{Appendix B}
\makeatletter
\def\@currentlabel{B} 
\makeatother
\gdef\theexample{B.\arabic{example}}
\label{Appendix-Boundary}
\setcounter{example}{0}
We show that the positivity assumption on the boundary $\partial\boK$ is not compatible with the framework considered in this paper. For both equations \eqref{eq:LFE-intro} and \eqref{eq:PDE-Minkowski}, we provide a one-parameter family of functionals, each admitting a point on $\partial\boK$ at which $\boI$ takes a negative value.
\begin{example}
    With the notation of Section \ref{Sect:LFE}, let $A\in \boC^2_T$ satisfy \eqref{cond:A-decay} and \eqref{hyp:bound-secondderivatives}. Let also us assume that \eqref{def:equilibrium} and \eqref{def:isolated2} are both satisfied at $x_0=0$, and that $\nabla A(t,0)\equiv0$. In particular, $A(t,x)$ satisfies the hypotheses of Theorem \ref{MainTheorem:LFE}. Then there exists $q_0\in\boK\setminus\R^3$ such that $\boI(q_0)<0$. Thus, necessarily,
    \[\int_0^T\dq_0\cdot A(t,q_0)dt<0.\]
    Let us assume that $\|\dq_0\|_\infty<1$ and take $\alpha>1$. We denote by $\boK_\alpha$ the set analogous to $\boK$ for the period $(T/\alpha)$. Let us introduce the function 
    \[p(t)=\dfrac{q_0(\alpha t)}{\alpha\|\dq_0\|_\infty} ,\quad t\in\R,\]
   which is clearly $(T/\alpha)$-periodic with $\|\dot p\|_\infty=1$, hence $p\in\partial\boK_\alpha$. Let us also define the potential
    \[A_{\alpha}(t,x):=\alpha\|\dq_0\|_\infty A\left(\alpha t,\alpha\|\dq_0\|_\infty x\right).\]
    It is straightforward to verify that $A_\alpha\in\boC^2_{T/\alpha}$ and satisfies the assumptions of Theorem~\ref{MainTheorem:LFE} with period $(T/\alpha)$. Moreover, the associated action functional
    \begin{equation}
        \boI_{\alpha}(q):=\int_0^\frac{T}{\alpha}\big(1-\sqrt{1-|\dot q|^2}+\dot q\cdot A_{\alpha}(t,q)\big)dt,\quad q\in\boK_\alpha,
    \end{equation}
    fits in the variational framework of Section~\ref{Sect:LFE}. Then
    we deduce that
    \begin{align*}
       \boI_\alpha(p)\leq\frac{T}{\alpha}+\int_0^{\frac{T}{\alpha}}\dpp(t)\cdot A_{\alpha}(t,p(t))dt=\frac{T}{\alpha}+\int_0^T\dq_0(t)\cdot A(t,q_0(t))dt.
    \end{align*}
    Therefore, for every sufficiently large $\alpha>0$, there exists a function $p\in\partial\boK_\alpha$ such that $\boI_\alpha(p)<0$.
\end{example}

\begin{example}
Let $\Omega\subset\R^N$ be a bounded domain with smooth boundary. For every $\lambda\in\R$, let  $f_\lambda:\Omega\times\R\to\R$ be defined by $f_\lambda(x,s)=a(x)s-\lambda g(s),$ with $a\in L^\infty(\Omega)$ and $g\in\boC^1(\R)$ such that $g(0)=g'(0)=0$. It is clear that $f_\lambda$
satisfies the hypotheses of Theorem \ref{MainTheorem-Dirichlet} for every $\lambda\in\R$.

Assume for instance that $g(s)>0$ for all $s>0$, so that $G(s):=\int_0^sg(t)dt>0$ for every $s>0$. With the notation of Section \ref{Section:Minkowski},  take  $q_0\in\partial\boK$ such that $q_0>0$ in $\Omega$. The hypothesis on $G$ implies that $G(q)>0$ in $\Omega$. Therefore, denoting the associated action functional by $\boI_\lambda$, one may take $\lambda>0$ large enough so that $\boI_\lambda(q_0)<0$. 
\end{example}


\begin{thebibliography}{99}

    \bibitem{ALP} J. Aguirre, A. Luque, D. Peralta-Salas. Motion of charged particles in magnetic fields created by symmetric configurations of wires, \textsl{Phys. D}, {\bf 239} (2010), no. 10, 654--674.   
			
	\bibitem{ABT} D. Arcoya, C. Bereanu, P.J. Torres. Critical point theory for the Lorentz force equation, \textsl{Arch. Ration. Mech. Anal.}, {\bf 232} (2019), 1685--1724.
			
	\bibitem{ABT2} D. Arcoya, C. Bereanu, P.J. Torres. Lusternik–Schnirelmann theory for the action integral of the Lorentz force equation, \textsl{Calc. Var. Partial Differential Equations},  {\bf 59} (2020), no.~2, Paper No. 50, 32.
    
	\bibitem{AS}D. Arcoya, C. Sportelli. Relativistic equations with singular potentials. \textsl{Z. Angew. Math. Phys.} {\bf 74}, (2023), no.~3, Paper No. 91, 22 pp.
    
    \bibitem{BS}  R. Bartnik, L. Simon. Spacelike hypersurfaces with prescribed boundary values and mean curvature, \textsl{Comm. Math. Phys.} {\bf 87} (1982/83), no.~1, 131--152.
			
	\bibitem{Ber1} C. Bereanu. Interactions in the Lorentz force equation, \textsl{Calc. Var. Partial Differential Equations} {\bf 63} (2024), no.~2, Paper No. 29, 25 pp.
			
	\bibitem{Ber2} C. Bereanu. Mountain pass periodic solutions for the Lorentz force equation via the Poincar\'e action functional, \textsl{Commun. Contemp. Math.}  {\bf 27} (2025), no.~3, Paper No. 2450009, 15 pp.

	\bibitem{BerJebMaw} C. Bereanu, P. Jebelean, J. Mawhin. The Dirichlet problem with mean curvature operator in Minkowski space - a variational approach. \textsl{Adv. Nonlinear Stud,} {\bf 14} (2014), no.~2, 315--326.

    \bibitem{BerJebMaw-Neumann} C. Bereanu, P. Jebelean, J. Mawhin. Radial solutions of Neumann problems involving mean extrinsic curvature and periodic nonlinearities. \textsl{Calc. Var. Partial Differential Equations} {\bf 46} (2013), no.~1-2, 113--122.

    \bibitem{BerJebTor}  C. Bereanu, P. Jebelean, P.J. Torres. Multiple positive radial solutions for a Dirichlet problem involving the mean curvature operator in Minkowski space. \textsl{J. Funct. Anal.}  {\bf 265} (2013), no.~4, 644--659.
    
	\bibitem{Ber-Pir}C. Bereanu, A. Pirvuceanu. $S^1$-index theory for the Lorentz force equation. \textsl{Annali Scuola Normale Superiore - Classe di Scienze}, {\bf 25} (2026).		

    \bibitem{BonDavPom} D. Bonheure, P. d'Avenia, A. Pomponio. On the electrostatic Born--Infeld equation with extended charges. \textsl{Comm. Math. Phys.} {\bf 346} (2016), no. 3, 877--906.
    
	\bibitem{Bos} A. Boscaggin, F. Colasuonno, B. Noris, F. Sani. An Orlicz space approach to exponential elliptic problems in higher dimensions. \textsl{Calc. Var. Partial Differential Equations} {\bf 65} (2026), no.~3, Paper No. 85, 39 pp.
    
	\bibitem{BosDamPap} A. Boscaggin, W. Dambrosio, D. Pappini. Infinitely many periodic solutions to a Lorentz force equation with singular electromagnetic potential. \textsl{J. Differential Equations} {\bf 383} (2024), 190--213.
			
	\bibitem{Bre} H. Brezis. Functional analysis, Sobolev spaces and partial differential equations. \textsl{New York: Springer}, 2011.
	
	\bibitem{Bre-Nir} H. Brezis, L. Nirenberg. Remarks on finding critical points. \textsl{Comm. Pure Appl. Math.} {\bf 44} (1991), no.~8-9, 939--963.

    \bibitem{ByeIkoMalMar1} J. Byeon, N. Ikoma, A. Malchiodi, L. Mari. Compactness via monotonicity in non-smooth critical point theory, with application to Born--Infeld type equations. \textsl{J. Funct. Anal.} {\bf 290} (2026), no. 11, Paper No. 111438, 77 pp.

    \bibitem{ByeIkoMalMar2} J. Byeon, N. Ikoma, A. Malchiodi, L. Mari. Existence and regularity for prescribed Lorentzian mean curvature hypersurfaces, and the Born--Infeld model. \textsl{Ann. PDE} {\bf 10} (2024), no. 1, Paper No. 4, 86 pp.

    \bibitem{Chang} K.C. Chang. Variational methods for nondifferentiable functionals and their applications to partial differential equations. \textsl{J. Math. Anal. Appl.} {\bf 80} (1981), no.~1, 102--129.
			
	\bibitem{Deim} K. Deimling. Ordinary differential equations in Banach spaces. Lecture Notes in Math. Vol. 596, \textsl{Springer-Verlag, Berlin-New York}, 1977. vi+137 pp.
			
	\bibitem{Eke-Las} I. Ekeland, J.M. Lasry. On the number of periodic trajectories for a Hamiltonian flow on a convex energy surface.  \textsl{Ann. of Math.} (2) {\bf 112} (1980), no.~2, 283--319.
    
    \bibitem{FPV}A. Fiscella, A. Pinamonti, E. Vecchi. Multiplicity results for magnetic fractional problems. \textsl{J. Differential Equations} {\bf 263} (2017), no.~8, 4617--4633.
    
    \bibitem{KKP}D.~A. Kandilakis, N.~C. Kourogenis, N.~S. Papageorgiou. Two nontrivial critical points for nonsmooth functionals via local linking and applications. \textsl{ J. Global Optim.} {\bf 34} (2006), no.~2, 219--244 
    
	\bibitem{GLM} M. Garzón, S. López-Martínez. Action-minimizing periodic orbits of the Lorentz force equation with dominant vector potential. \textsl{Calc. Var. Partial Differential Equations} {\bf 65} (2026), no.~6, Paper No. 180, 20 pp.           

    \bibitem{GM} M. Garz\'on, S. Marò. Motions of a charged particle in the electromagnetic field induced by a non-stationary current. \textsl{Phys. D} {\bf 424} (2021), Paper No. 132945, 9 pp.
			
	\bibitem{GT} M. Garz\'on, P.J. Torres. Periodic solutions for the Lorentz force equation with singular potentials. \textsl{Nonlinear Anal. Real World Appl.} {\bf 56} (2020), 103162, 6 pp.
			
	\bibitem{GT2} M. Garz\'on, P.J. Torres. Periodic dynamics in the relativistic regime of an electromagnetic field induced by a time-dependent wire. \textsl{J. Differential Equations} {\bf 362} (2023), 173--197.
			
	\bibitem{GP1} F.G. Gasc\'on, D. Peralta-Salas. Motion of a charge in the magnetic field created by wires: impossibility of reaching the wires. \textsl{Phys. Lett. A} {\bf 333} (2004), no. 1-2, 72--78.
			
	\bibitem{GP2} F.G. Gasc\'on, D. Peralta-Salas. Some properties of the magnetic fields generated by symmetric configurations of wires. \textsl{Phys. D} {\bf 206} (2005), no. 1-2, 109--120.
	
    \bibitem{Heinonen} J. Heinonen.  Lectures on Lipschitz analysis. \textsl{Report. University of Jyv\"askyl\"a{} Department of Mathematics and Statistics,} {\bf 100}, Univ. Jyv\"askyl\"a, Jyv\"askyl\"a, 2005.

    \bibitem{Li-Liu} S. Li, J. Q. Liu. Some existence theorems on multiple critical points and their applications. \textsl{Kexue Tongbao}, 17 (1984), 1025–1027.
    
    \bibitem{Li-Liu2} S. Li, J. Q. Liu. Morse theory and asymptotic linear Hamiltonian system. \textsl{ J. Differential Equations} {\bf 78} (1989), no.~1, 53--73.

    \bibitem{Li-Willem} S. Li, M. Willem. Applications of local linking to critical-point theory. \textsl{ Journal of Mathematical Analysis and Applications}, 1995, vol. 189, no 1.

    \bibitem{LMM}R. Livrea, S.~A. Marano, D. Motreanu. Critical points for nondifferentiable functions in presence of splitting. \textsl{J. Differential Equations} {\bf 226} (2006), no.~2, 704--725.

    \bibitem{MBR} G. Molica~Bisci, V.~D. R\u adulescu. Applications of local linking to nonlocal Neumann problems. \textsl{Commun. Contemp. Math.} {\bf 17} (2015), no.~1, 1450001, 17 pp.
    
    \bibitem{Perera} K. Perera. Homological local linking. \textsl{Abstr. Appl. Anal.} {\bf 3} (1998), no.~1-2, 181--189
			
	\bibitem{Po2} H. Poincar\'e. Sur la dynamique de l'\'electron. \textsl{Rend. Circ. Mat. Palermo} {\bf 21} (1906), 129--176.
			
	\bibitem{Szu} A. Szulkin. Minimax principles for lower semicontinuous functions and applications to nonlinear boundary value problems. \textsl{Ann. Inst. H. Poincar\'e Anal. Non Lin\'eaire} {\bf 3} (1986), no.2, 77--109.
	    
	\bibitem{Willem} M. Willem. Minimax Theorems. Progr. Nonlinear Differential Equations Appl., 24, \textsl{Birkh\"auser Boston, Inc., Boston, MA,} 1996. x+162 pp. 

    \bibitem{Wu} X. Wu. A new critical point theorem for locally Lipschitz functionals with applications to differential equations. \textsl{Nonlinear Anal.} {\bf 66} (2007), no.~3, 624--638.
			
			
\end{thebibliography}
\end{document}